\newcommand{\ZZ}{\mathbb{Z}}
\newcommand{\NN}{\mathbb{N}}
\newcommand{\QQ}{\mathbb{Q}}
\newcommand{\RR}{\mathbb{R}}
\newcommand{\PP}{\mathbb{P}}
\newcommand{\FF}{\mathbb{F}}
\newcommand{\D}{\mathcal{D}}
\newcommand{\CO}{\mathcal{O}}
\DeclareMathOperator{\minvert}{min_{\text{vert}}}
\DeclareMathOperator{\loc}{Loc}
\DeclareMathOperator{\ord}{ord}
\DeclareMathOperator{\Pol}{Pol}
\DeclareMathOperator{\pdv}{X}
\DeclareMathOperator{\Proj}{\mathbf{Proj}}
\DeclareMathOperator{\spec}{Spec}
\DeclareMathOperator{\Spec}{\mathbf{Spec}}
\DeclareMathOperator{\Hom}{Hom}
\DeclareMathOperator{\tail}{tail}
\DeclareMathOperator{\SF}{SF}
\DeclareMathOperator{\tcadiv}{T-CaDiv}
\DeclareMathOperator{\divisor}{div}
\DeclareMathOperator{\wdiv}{Div}
\DeclareMathOperator{\conv}{conv}
\DeclareMathOperator{\vol}{vol}
\DeclareMathOperator{\inn}{int}
\DeclareMathOperator{\graph}{\Gamma}
\newcommand{\fan}{\Xi}
\DeclareMathOperator{\pr}{pr}
\DeclareMathOperator{\ev}{ev}
\newtheorem{thm}{Theorem}[section]
\newtheorem{lem}[thm]{Lemma}
\newtheorem{prop}[thm]{Proposition}
\newtheorem{cor}[thm]{Corollary}
\theoremstyle{definition}
\newtheorem{defn}[thm]{Definition}
\newtheorem{exmp}[thm]{Example}
\newtheorem{rem}[thm]{Remark}
\title{AG Codes from Polyhedral Divisors}
\author{Nathan Owen Ilten and Hendrik S\"u\ss{}}
\def\address#1#2{\begingroup
\noindent\parbox[t]{7.8cm}{%
\small{\scshape\ignorespaces#1}\par\vskip1ex
\noindent\small{\itshape E-mail address}%
\/: #2\par\vskip4ex}\hfill%
\endgroup}%
\newcommand{\exmpi}{%
 \psset{unit=0.5cm}
 \begin{pspicture}(-3,-1)(3,1)%
\psgrid[gridwidth=0.3pt,griddots=5,subgriddiv=1,gridlabels=5pt](-3,-1)(3,1)
  \psset{linewidth=1pt}%
  \psline{<-]}(-3,0)(0.5,0)
  \psline{[->}(0.5,0)(3,0)
 \end{pspicture}
}
\newcommand{\exmpismooth}{%
 \psset{unit=0.5cm}
 \begin{pspicture}(-3,-1)(3,1)%
\psgrid[gridwidth=0.3pt,griddots=5,subgriddiv=1,gridlabels=5pt](-3,-1)(3,1)
  \psset{linewidth=1pt}%
  \psline{<-*}(3,0)(1,0)
  \psline{-]}(1,0)(0.5,0)
  \psline{[-*}(0.5,0)(0,0)
  \psline{->}(0,0)(-3,0)
 \end{pspicture}
}
\newcommand{\exmpii}{%
 \psset{unit=0.5cm}
 \begin{pspicture}(-3,-1)(3,1)%
\psgrid[gridwidth=0.3pt,griddots=5,subgriddiv=1,gridlabels=5pt](-3,-1)(3,1)
  \psset{linewidth=1pt}%
  \psline{<-]}(3,0)(1,0)
  \psline{[-]}(1,0)(-1,0)
  \psline{[-]}(-1,0)(-2,0)
  \psline{[->}(-2,0)(-3,0)
 \end{pspicture}
}
\newcommand{\exmpiismooth}{%
 \psset{unit=0.5cm}
 \begin{pspicture}(-3,-1)(3,1)%
\psgrid[gridwidth=0.3pt,griddots=5,subgriddiv=1,gridlabels=5pt](-3,-1)(3,1)
  \psset{linewidth=1pt}%
  \psline{<-]}(3,0)(1,0)
  \psline{[-*}(1,0)(0,0)
  \psline{-]}(0,0)(-1,0)
  \psline{[-]}(-1,0)(-2,0)
  \psline{[->}(-2,0)(-3,0)
 \end{pspicture}
}
\newcommand{\exmpih}{%
 \psset{yunit=0.4cm}
 \psset{xunit=0.5cm}
 \begin{pspicture}(-3,-11)(3,1)%
   \psgrid[gridwidth=0.3pt,griddots=5,subgriddiv=1,gridlabels=5pt](-3,-11)(3,1)
  \psset{linewidth=1pt}%
  \psline{<-]}(-3,0)(0.5,0)
  \psline{[->}(0.5,0)(3,0)
  \psset{linewidth=1pt,linecolor=gray}%
  \psline{-}(3,0)(0.5,0)
  \psline{-}(0.5,0)(-2.25,-11)
 \end{pspicture}
}
\newcommand{\exmpiih}{%
 \psset{yunit=0.4cm}
 \psset{xunit=0.5cm}
 \begin{pspicture}(-3,-11)(3,1)%
   \psgrid[gridwidth=0.3pt,griddots=5,subgriddiv=1,gridlabels=5pt](-3,-11)(3,1)
  \psset{linewidth=1pt}%
  \psline{<-]}(3,0)(1,0)
  \psline{[-]}(-1,0)(1,0)
  \psline{[-]}(-1,0)(-2,0)
  \psline{[->}(-2,0)(-3,0)
  \psset{linewidth=1pt,linecolor=gray}%
  \psline{-}(3,0)(1,0)
  \psline{-}(1,0)(-1,-4)
  \psline{-}(-1,-4)(-2,-7)
  \psline{-}(-2,-7)(-3,-11)
 \end{pspicture}
}
\newcommand{\exmpip}{%
 \psset{unit=0.5cm}
 \begin{pspicture}(-1,-2)(5,3)%
   \psgrid[gridwidth=0.3pt,griddots=5,subgriddiv=1,gridlabels=5pt](-1,-2)(5,3)
  \psline{[-]}(0,0)(4,0)
  \psset{linewidth=1pt,linecolor=gray}%
  \psline{-}(0,0)(4,2)
 \end{pspicture}
}
\newcommand{\exmpiip}{%
 \psset{unit=0.5cm}
 \begin{pspicture}(-1,-2)(5,3)%
   \psgrid[gridwidth=0.3pt,griddots=5,subgriddiv=1,gridlabels=5pt](-1,-2)(5,3)
  \psline{[-]}(0,0)(4,0)
  \psset{linewidth=1pt,linecolor=gray}%
  \psline{-}(0,0)(2,2)
  \psline{-}(2,2)(3,1)
  \psline{-}(3,1)(4,-1)
 \end{pspicture}
}
\newcommand{\exrsh}{%
 \psset{yunit=0.4cm}
 \psset{xunit=0.5cm}
 \begin{pspicture}(-3,-11)(3,1)%
   \psgrid[gridwidth=0.3pt,griddots=5,subgriddiv=1,gridlabels=5pt](-3,-11)(3,1)
  \psset{linewidth=1pt}%
  \psline{<-]}(-3,0)(1,0)
  \psline{[->}(1,0)(3,0)
  \psset{linewidth=1pt,linecolor=gray}%
  \psline{-}(3,-2)(1,-2)
  \psline{-}(1,-2)(-2,-11)
 \end{pspicture}
}
\newcommand{\exrsp}{%
 \psset{unit=0.5cm}
 \begin{pspicture}(-2,-1)(4,6)%
   \psgrid[gridwidth=0.3pt,griddots=5,subgriddiv=1,gridlabels=5pt](-1,-1)(4,6)
  \psline{[-]}(0,0)(3,0)
  \psset{linewidth=1pt,linecolor=gray}%
  \psline{-}(0,2)(3,5)
 \end{pspicture}
}
\newcommand{\recordi}{%
 \psset{yunit=0.4cm}
 \psset{xunit=0.5cm}
 \begin{pspicture}(-1,-5)(6,6)%
   \psgrid[gridwidth=0.3pt,griddots=5,subgriddiv=1,gridlabels=5pt](-1,-5)(6,6)
  \psline{[-]}(0,0)(5,0)
  \psset{linewidth=1pt,linecolor=gray}%
  \psline{-}(0,0)(5,6)
 \end{pspicture}
}
\newcommand{\recordii}{%
 \psset{yunit=0.4cm}
 \psset{xunit=0.5cm}
 \begin{pspicture}(-1,-5)(6,6)%
   \psgrid[gridwidth=0.3pt,griddots=5,subgriddiv=1,gridlabels=5pt](-1,-5)(6,6)
  \psline{[-]}(0,0)(5,0)
  \psset{linewidth=1pt,linecolor=gray}%
  \psline{-}(0,0)(1,2)(2,3)(3,3)(5,-5)
 \end{pspicture}
}
\def\defineTColor#1#2{%
  \newpsstyle{#1}{%
    fillstyle=vlines,hatchcolor=#2,%
    hatchwidth=0.1\pslinewidth,hatchsep=1\pslinewidth%
}}
\def\defineTHColor#1#2{%
  \newpsstyle{#1}{%
    fillstyle=hlines,hatchcolor=#2,%
    hatchwidth=0.1\pslinewidth,hatchsep=1\pslinewidth%
}}
\newcommand{\canDivNullh}{
 \psset{unit=0.5cm,Alpha=55,Beta=20}%
 \begin{pspicture}(-4,-6)(4,2)%
   \psset{linewidth=0.3pt}%
   \psset{style=TDGray}
   \pstThreeDLine(-3,-3,-3)(0,0,0)(0,-3,-3)
   \pstThreeDLine(-3,1,-3)(0,1,0)(0,0,0)(-3,-3,-3)
   \pstThreeDLine(0,4,-3)(0,1,0)(-3,1,-3)

   \psset{style=TD2Gray}
   \pstThreeDLine(0,-3,-3)(0,0,0)(3,0,-3)
   \pstThreeDLine(3,4,-3)(0,1,0)(0,4,-3)
   \pstThreeDLine(3,4,-3)(0,1,0)(0,0,0)(3,0,-3)
    \pstThreeDPlaneGrid[linewidth=0.1pt,subticks=3](0,0)(3,3)
    \pstThreeDPlaneGrid[linewidth=0.1pt,subticks=3](0,0)(-3,-3)
    \pstThreeDPlaneGrid[linewidth=0.1pt,subticks=3](0,0)(-3,3)
    \pstThreeDPlaneGrid[linewidth=0.1pt,subticks=3](0,0)(3,-3)
    \psset{linewidth=1.5pt,fillstyle=none}%
   \pstThreeDLine{-}(0,3,0)(0,1,0)(2,3,0)%
   \pstThreeDLine{-}(2,3,0)(0,1,0)(0,0,0)(3,0,0)%
   \pstThreeDLine{-}(3,0,0)(0,0,0)(0,-3,0)%
   \pstThreeDLine{-}(0,-3,0)(0,0,0)(-3,-3,0)%
   \pstThreeDLine{-}(-3, -3,0)(0,0,0)(0,1,0)(-3,1,0)%
   \pstThreeDLine{-}(-3, 1,0)(0,1,0)(0,3,0)%
 \end{pspicture}   
}
\newcommand{\canDivInftyh}{
 \psset{unit=0.5cm,Alpha=55,Beta=20}%
 \begin{pspicture}(-4,-6)(4,2)%
   \psset{linewidth=0.3pt}%
   \psset{style=TDGray}
   \pstThreeDLine{-}(-1,-4,-5)(-1,-1,-2)(-4,-4,-5)%
   \pstThreeDLine{-}(-4,-4,-5)(-1,-1,-2)(-4,-1,-5)%
   \psset{style=TD2Gray}
   \pstThreeDLine{-}(-4,-1,-5)(-1,-1,-2)(0,0,-2)(0,3,-5)%
   \pstThreeDLine{-}(3,0,-5)(0,0,-2)(-1,-1,-2)(-1,-4,-5)%
   \pstThreeDLine{-}(0,3,-5)(0,0,-2)(3,3,-5)%
   \pstThreeDLine{-}(3,3,-5)(0,0,-2)(3,0,-5)%
   \pstThreeDPlaneGrid[linewidth=0.1pt,subticks=3](0,0)(3,3)
   \pstThreeDPlaneGrid[linewidth=0.1pt,subticks=3](0,0)(-3,-3)
   \pstThreeDPlaneGrid[linewidth=0.1pt,subticks=3](0,0)(-3,3)
   \pstThreeDPlaneGrid[linewidth=0.1pt,subticks=3](0,0)(3,-3)
   \psset{linewidth=1.5pt,fillstyle=none}%
   \pstThreeDLine{-}(0,3,0)(0,0,0)(3,3,0)%
   \pstThreeDLine{-}(3,3,0)(0,0,0)(3,0,0)%
   \pstThreeDLine{-}(3,0,0)(0,0,0)(-1,-1,0)(-1,-3,0)%
   \pstThreeDLine{-}(-1,-3,0)(-1,-1,0)(-2,-2,0)%
   \pstThreeDLine{-}(-3,-3,0)(-1,-1,0)(-3,-1,0)%
   \pstThreeDLine{-}(-3,-1,0)(-1,-1,0)(0,0,0)(0,3,0)%
 \end{pspicture}   
}
\newcommand{\canDivOneh}{
 \psset{unit=0.5cm,Alpha=55,Beta=20}%
 \begin{pspicture}(-4,-6)(4,2)%
   \psset{linewidth=0.3pt}%
   \psset{style=TDGray}
   \pstThreeDLine{-}(1,-3,-3)(1,0,0)(0,0,0)(-3,-3,-3)%
   \pstThreeDLine{-}(-3,-3,-3)(0,0,0)(-3,0,-3)%
   \pstThreeDLine{-}(-3,0,-3)(0,0,0)(0,3,-3)%

   \psset{style=TD2Gray}
   \pstThreeDLine{-}(0,3,-3)(0,0,0)(1,0,0)(4,3,-3)%
   \pstThreeDLine{-}(4,3,-3)(1,0,0)(4,0,-3)%
   \pstThreeDLine{-}(4,0,-3)(1,0,0)(1,-3,-3)%
    \pstThreeDPlaneGrid[linewidth=0.1pt,subticks=3](0,0)(3,3)
   \pstThreeDPlaneGrid[linewidth=0.1pt,subticks=3](0,0)(-3,-3)
   \pstThreeDPlaneGrid[linewidth=0.1pt,subticks=3](0,0)(-3,3)
   \pstThreeDPlaneGrid[linewidth=0.1pt,subticks=3](0,0)(3,-3)
   \psset{linewidth=1.5pt,fillstyle=none}%
   \pstThreeDLine{-}(0,3,0)(0,0,0)(1,0,0)(3,2,0)%
   \pstThreeDLine{-}(3,2,0)(1,0,0)(3,0,0)%
   \pstThreeDLine{-}(3,0,0)(1,0,0)(1,-3,0)%
   \pstThreeDLine{-}(1,-3,0)(1,0,0)(0,0,0)(-3,-3,0)%
   \pstThreeDLine{-}(-3,-3,0)(0,0,0)(-3,0,0)%
   \pstThreeDLine{-}(-3,0,0)(0,0,0)(0,3,0)%
 \end{pspicture}   
}
\newcommand{\canDivNullp}{
 \psset{unit=0.55cm,Alpha=60,Beta=40}%
 \begin{pspicture}(-3,-3)(3,3)%
   \psset{linewidth=0.3pt}%
   \psset{style=TDGray}
   \pstThreeDPlaneGrid[linewidth=0.1pt,subticks=2](0,0)(2,2)
   \pstThreeDPlaneGrid[linewidth=0.1pt,subticks=2](0,0)(-2,-2)
   \pstThreeDPlaneGrid[linewidth=0.1pt,subticks=2](0,0)(-2,2)
   \pstThreeDPlaneGrid[linewidth=0.1pt,subticks=2](0,0)(2,-2)
   \pstThreeDLine{-}(-1,0,0)(0,-1,-1)(1,-1,-1)(1,0,0)(-1,0,0)
   \pstThreeDLine{-}(1,0,0)(0,1,0)(-1,1,0)(-1,0,0)(1,0,0)
   \psset{linewidth=1pt,fillstyle=none}%
   \pstThreeDLine{-}(1,0,0)(0,1,0)(-1,1,0)(-1,0,0)(0,-1,0)(1,-1,0)(1,0,0)
 \end{pspicture}   
}
\newcommand{\canDivInftyp}{
 \psset{unit=0.55cm}%
 \begin{pspicture}(-3,-3)(3,3)%
   \psset{linewidth=0.3pt}%
   \psset{style=TDGray}
   \pstThreeDPlaneGrid[linewidth=0.1pt,subticks=2](0,0)(2,2)
   \pstThreeDPlaneGrid[linewidth=0.1pt,subticks=2](0,0)(-2,-2)
   \pstThreeDPlaneGrid[linewidth=0.1pt,subticks=2](0,0)(-2,2)
   \pstThreeDPlaneGrid[linewidth=0.1pt,subticks=2](0,0)(2,-2)
   \pstThreeDLine{-}(1,-1,2)(1,0,1)(0,1,1)(-1,1,2)(1,-1,2)
   \pstThreeDLine{-}(-1,1,2)(-1,0,2)(0,-1,2)(1,-1,2)(-1,1,2)
   \psset{linewidth=1pt,fillstyle=none}%
   \pstThreeDLine{-}(1,0,0)(0,1,0)(-1,1,0)(-1,0,0)(0,-1,0)(1,-1,0)(1,0,0)
 \end{pspicture}   
}
\newcommand{\canDivOnep}{
 \psset{unit=0.55cm,Alpha=30,Beta=40}%
 \begin{pspicture}(-3,-3)(3,3)%
   \psset{linewidth=0.3pt}%
   \psset{style=TDGray}
   \pstThreeDPlaneGrid[linewidth=0.1pt,subticks=2](0,0)(2,2)
   \pstThreeDPlaneGrid[linewidth=0.1pt,subticks=2](0,0)(-2,-2)
   \pstThreeDPlaneGrid[linewidth=0.1pt,subticks=2](0,0)(-2,2)
   \pstThreeDPlaneGrid[linewidth=0.1pt,subticks=2](0,0)(2,-2)
   \pstThreeDLine{-}(0,-1,0)(1,-1,0)(1,0,0)(0,1,0)(0,-1,0)
   \pstThreeDLine{-}(0,-1,0)(-1,0,-1)(-1,1,-1)(0,1,0)(0,-1,0)
   \psset{linewidth=1pt,fillstyle=none}%
   \pstThreeDLine{-}(1,0,0)(0,1,0)(-1,1,0)(-1,0,0)(0,-1,0)(1,-1,0)(1,0,0)
 \end{pspicture}   
}
\newcommand{\cotangfannull}{%
 \psset{unit=0.5cm}
 \begin{pspicture}(-3.2,-3.2)(3.2,3.2)%
   \psgrid[gridwidth=0.3pt,griddots=5,subgriddiv=1,gridlabels=5pt](-3,-3)(3,3)

 \psset{linewidth=1pt}%

 \psline{-}(0,3)(0,1)(2,3)%
 \psline{-}(2,3)(0,1)(0,0)(3,0)%
 \psline{-}(3,0)(0,0)(0,-3)%
 \psline{-}(0,-3)(0,0)(-2,-2)%
 \psline{-}(-3, -3)(0,0)(0,1)(-3,1)%
 \psline{-}(-3, 1)(0,1)(0,3)%
\end{pspicture}}
\newcommand{\cotangfannullsmooth}{%
 \psset{unit=0.3cm}
 \begin{pspicture}(-3.2,-3.1)(3.2,3.1)%
   \psgrid[gridwidth=0.3pt,griddots=5,subgriddiv=1,gridlabels=5pt](-3,-3)(3,3)

 \psset{linewidth=1pt}%
 
 \psline{-}(0,3)(0,1)(2,3)%
 \psline{-}(2,3)(0,1)(0,0)(3,0)%
 \psline{-}(3,0)(0,0)(0,-3)%
 \psline{-}(0,-3)(0,0)(-2,-2)%
 \psline{-}(-3, -3)(0,0)(0,1)(-3,1)%
 \psline{-}(-3, 1)(0,1)(0,3)%

 \psline[linestyle=dashed]{-}(-3, 0)(0,0)
 \psline[linestyle=dashed]{-}(3, 1)(0,1)
\end{pspicture}}
\newcommand{\cotangfaninfty}{%
 \psset{unit=0.5cm}
 \begin{pspicture}(-3.2,-3.2)(3.2,3.2)%
   \psgrid[gridwidth=0.3pt,griddots=5,subgriddiv=1,gridlabels=5pt](-3,-3)(3,3)
  \psset{linewidth=1pt}%
  \psline{-}(0,3)(0,0)(3,3)%
  \psline{-}(2,2)(0,0)(0,0)(3,0)%
  \psline{-}(3,0)(0,0)(-1,-1)(-1,-3)%
  \psline{-}(-1,-3)(-1,-1)(-2,-2)%
  \psline{-}(-3, -3)(-1,-1)(-3,-1)%
  \psline{-}(-3, -1)(-1,-1)(0,0)(0,3)%
\end{pspicture}}
\newcommand{\cotangfaninftysmooth}{%
 \psset{unit=0.3cm}
 \begin{pspicture}(-3.2,-3.1)(3.2,3.1)%
   \psgrid[gridwidth=0.3pt,griddots=5,subgriddiv=1,gridlabels=5pt](-3,-3)(3,3)
  \psset{linewidth=1pt}%
  \psline{-}(0,3)(0,0)(3,3)%
  \psline{-}(2,2)(0,0)(0,0)(3,0)%
  \psline{-}(3,0)(0,0)(-1,-1)(-1,-3)%
  \psline{-}(-1,-3)(-1,-1)(-2,-2)%
  \psline{-}(-3, -3)(-1,-1)(-3,-1)%
  \psline{-}(-3, -1)(-1,-1)(0,0)(0,3)%
  \psline[linestyle=dashed]{-}(0, -3)(0,0)
  \psline[linestyle=dashed]{-}(-1, 3)(-1,-1)
\end{pspicture}}
\newcommand{\cotangfanone}{%
 \psset{unit=0.5cm}
 \begin{pspicture}(-3.2,-3.2)(3.2,3.2)%
\psgrid[gridwidth=0.3pt,griddots=5,subgriddiv=1,gridlabels=5pt](-3,-3)(3,3)
 \psset{linewidth=1pt}%
 \psline{-}(0,3)(0,0)(1,0)(3,2)%
 \psline{-}(3,2)(1,0)(3,0)%
 \psline{-}(3,0)(1,0)(1,-3)%
 \psline{-}(1,-3)(1,0)(0,0)(-3,-3)%
 \psline{-}(-2, -2)(0,0)(-3,0)%
 \psline{-}(-3, 0)(0,0)(0,3)%
\end{pspicture}}
\newcommand{\cotangfanonesmooth}{%
 \psset{unit=0.3cm}
 \begin{pspicture}(-3.2,-3.1)(3.2,3.1)%
\psgrid[gridwidth=0.3pt,griddots=5,subgriddiv=1,gridlabels=5pt](-3,-3)(3,3)
 \psset{linewidth=1pt}%
 \psline{-}(0,3)(0,0)(1,0)(3,2)%
 \psline{-}(3,2)(1,0)(3,0)%
 \psline{-}(3,0)(1,0)(1,-3)%
 \psline{-}(1,-3)(1,0)(0,0)(-3,-3)%
 \psline{-}(-2, -2)(0,0)(-3,0)%
 \psline{-}(-3, 0)(0,0)(0,3)%
 \psline[linestyle=dashed]{-}(1, 0)(1,3)
 \psline[linestyle=dashed]{-}(0, -3)(0,0)
\end{pspicture}}
\newcommand{\tropical}[1]{{\mathcode`\*="020C\mathcode`\+="0208\renewcommand{\cdot}{\odot}\renewcommand{\sum}{\bigoplus}
$#1$}}
\newcommand{\mtrop}[1]{\text{\tropical{#1}}}
\begin{document}

\maketitle

\begin{abstract}
A description of complete normal varieties with lower dimensional torus action has been given in \cite{divfans}, generalizing the theory of toric varieties. Considering the case where the acting torus $T$ has codimension one, we describe $T$-invariant Weil and Cartier divisors and provide formulae for calculating global sections, intersection numbers, and Euler characteristics. As an application, we use divisors on these so-called $T$-varieties to define new evaluation codes called $T$-codes. We find estimates on their minimum distance using intersection theory. This generalizes the theory of toric codes and combines it with AG codes on curves. 
As the simplest application of our general techniques we look at codes
on ruled surfaces coming from decomposable vector bundles. Already this
construction gives codes that are better than the related product code.
Further examples show that we can improve these codes by constructing
more sophisticated $T$-varieties.  These results suggest to look further
for good codes on $T$-varieties.\end{abstract}

\section{Introduction}\label{sec:introduction}
An important class of linear codes is the class of Algebraic Geometry Codes, introduced by Goppa in 1981. These codes arise by evaluating global sections of a line bundle on a curve over  $\mathbb{F}_q$ at a number of $\mathbb{F}_q$-rational points; good estimates on the dimension and minimum distance of such codes can be obtained by using the theorem of Riemann-Roch. Such codes have since been generalized to higher-dimensional varieties. It is however often difficult to obtain non-trivial estimates on the parameters of such codes. One class of varieties where non-trivial estimates have been made is that of toric varieties, which one can describe combinatorially.

Toric varieties have been generalized in \cite{MR2207875} and \cite{divfans} to so-called $T$-varieties, which are normal varieties admitting an effective $m$-dimensional torus action. $T$-varieties can then be described by a variety $Y$ of dimension $\dim X -m$ along with combinatorial data called a divisorial fan. If the acting torus has codimension one, $Y$ is then a curve. The aim of this paper is to analyze certain evaluation codes on such varieties; we shall call these codes $T$-codes. 

In short, a $T$-code over $\mathbb{F}_q$ is constructed from:
\begin{itemize}
\item
a curve $Y$ over $\mathbb{F}_q$;
\item
a so-called \emph{divisorial polytope} (cf. definition \ref{def:divpoly}), essentially a concave function $h^*:\Box_h\to \wdiv_\QQ Y$ where $\Box_h$ is a polytope with vertices in some lattice $M\cong \mathbb{Z}^m$ and $h^*$ satisfies some additional conditions;
\item and a set $\mathcal{P}=\{P_1,\ldots,P_l\}$ of $\mathbb{F}_q$-rational points on $Y$.
\end{itemize}
Assuming that the support of $h^*(u)$ is disjoint from $\mathcal{P}$ for each $u\in \Box_h\cap M$, we can define the $T$-code $\mathcal{C}(Y,h^*,\mathcal{P})$ as the sum of a number of product codes:
 $$\mathcal{C}(Y,h^*,\mathcal{P}):= \sum_{u \in \Box_h \cap M} \mathcal{C}_u \otimes \mathcal{C}(Y,h^*(u),\mathcal{P})$$
where $\mathcal{C}_u$ is the $[(q-1)^m,1,(q-1)^m]$ code generated by $\left(t^{u}\right)_{t \in {(\mathbb{F}_q^*)}^m}$ and $\mathcal{C}(Y,h^*(u),\mathcal{P})$ is the AG code corresponding to the curve $Y$, divisor $h^*(u)$, and point set $\mathcal{P}$. By interpreting $\mathcal{C}(Y,h^*,\mathcal{P})$ as the image under a linear map of the Riemman-Roch space of a divisor on a $T$-variety, we are able to give non-trivial estimates for the dimension $k$ and minimum distance $d$ of this code.

We begin in section \ref{sec:theory-t-varieties} by recalling the basic theory of $T$-varieties. We then proceed to describe divisors and intersection theory on $T$-varieties in section \ref{sec:divisors_and_intersections}. In particular, we describe all $T$-invariant Cartier and Weil divisors combinatorially, calculate the global sections of a $T$-invariant Cartier divisor, and determine exactly when a $T$-Cartier divisor is (semi-)ample. Furthermore, we provide formulae for calculating intersection numbers and for the Euler characteristic of a line bundle. The theory of this section is analogue to that of divisors on toric varieties and is essential for estimating the parameters of the evaluation codes we construct.

In section \ref{sec:t-codes}, we define $T$-codes and show how to estimate dimension and minimum distance, providing upper and lower bounds for both parameters. We give special attention to the case of two-dimensional $T$-varieties, where we provide a better lower bound for the minimum distance.

Finally, we provide a number of examples in section \ref{sec:examples}. We first consider $T$-codes coming from those ruled surfaces corresponding to a rank two decomposable vector bundle. In particular, we show that some of these codes have better parameters than those estimated for the product of a Reed-Solomon and a one-point Goppa code. In a second example, we show how one can use the Hasse-Weil bound to improve the lower bound on the minimum distance. This example also shows that there are better $T$-codes than those coming from ruled surfaces. In a final example, we describe a $T$-code over $\mathbb{F}_7$ whose parameters are as good as any known linear code.  

\section{The Theory of $T$-Varieties}
\label{sec:theory-t-varieties}
First we recall some facts and notations from convex geometry.
Here, $N$ always is a lattice and $M:=\Hom(N,\ZZ)$ its dual.
The associated $\QQ$-vector spaces $N \otimes \QQ$ and $M \otimes \QQ$ are denoted by $N_\QQ$ and $M_\QQ$ respectively.
Let $\sigma \subset N_\QQ$ be a pointed convex polyhedral cone.
A polyhedron $\Delta$ which can be written as a Minkowski sum 
$\Delta = \pi + \sigma$ of $\sigma$ and a compact polyhedron $\pi$ is said to have $\sigma$ as its tail cone.

With respect to Minkowski addition the polyhedra with tail cone $\sigma$ form a semigroup which we denote by $\Pol_{\sigma}^+(N)$.
Note that $\sigma \in \Pol_{\sigma}^+(N)$ is the neutral element of this semigroup and that $\emptyset$ is by definition also an element of $\Pol_{\sigma}^+(N)$.

A polyhedral divisor with tail cone $\sigma$ on a normal variety $Y$ is a 
formal finite sum
$$\D = \sum_D \Delta_D \otimes D,$$
where $D$ runs over all prime divisors on $Y$ and $\Delta_D \in \Pol^+_\sigma$.
Here, finite means that only finitely many coefficients differ from the 
tail cone.

We may evaluate a polyhedral divisor for every element $u \in \sigma^\vee \cap M$ via
$$\D(u):=\sum_D \min_{v \in \Delta_D} \langle u , v \rangle D$$
in order to obtain an ordinary divisor on $\loc \D$. Here, 
$ \loc \D:= Y \setminus \left( \bigcup_{\Delta_D = \emptyset} D \right)$ denotes the locus of $\D$.

\begin{defn}
  A polyhedral divisor $\D$ is called {\em Cartier} if 
  every evaluation $\D(u)$, $u \in \sigma^\vee \cap M$, is Cartier.
\end{defn}

To a Cartier polyhedral divisor we associate a $M$-graded $k$-algebra sheaf and consequently an affine scheme over $\loc \D$ admitting a $T^M$-action:
$$\tilde{X}:=\tilde{\pdv}(\D):= \Spec_{\loc \D} \bigoplus_{u \in \sigma^\vee \cap M} \CO(\D(u)).$$

From \cite{MR2207875} we know that this construction gives a normal variety of dimension $\dim N + \dim Y$ admitting a torus action of $T^N$ with $\loc \D$ as its good quotient.
 
Moreover, for every affine normal variety $X$ there exists a polyhedral divisor $\D$ such that $X = \spec \Gamma(\tilde{\pdv}(\D), \CO_{\tilde{\pdv}(\D)})$. $X$ and $\tilde{X}$  coincide if $X$ admits a torus action with a good quotient.

\begin{defn}
Let $\D=\sum_D \Delta_D \otimes D$, $\D'=\sum_D \Delta'_D \otimes D$ be two polyhedral divisors on $Y$.
\begin{enumerate}
\item We write $\D' \subset \D$ if $\Delta'_D \subset \Delta_D$ holds for every prime divisor $D$.
\item We define the intersection of polyhedral divisors $$\D \cap \D' := \sum_D (\Delta'_D \cap \Delta_D) \otimes D.$$
\item We define the degree of a polyhedral divisor $$\deg \D := \sum_D \Delta_D.$$
\item For a (not necessarily closed) point $y \in Y$ we define the fibre polyhedron $\Delta_y := \D_y := \sum_{y \in D} \Delta_D$.
\item We call $\D'$  a {\em face} of $\D$ and write $\D' \prec \D$ if $\D'_y$ is a face of $\D_y$ for every $y \in Y$.
\end{enumerate}
\end{defn}

Assume $\D' \subset \D$. This implies
$$\bigoplus_{u \in \sigma^\vee \cap M} \CO(\D'(u))
\hookleftarrow \bigoplus_{u \in \sigma^\vee \cap M}  \CO(\D(u)))$$ and we get a dominant morphism $\tilde{\pdv}(\D') \rightarrow \tilde{\pdv}(\D)$.

\begin{prop}[\cite{divfans}, Prop. 3.4, Rem. 3.5]
  This morphism defines an open embedding if and only if $\D' \prec \D$ holds.
\end{prop}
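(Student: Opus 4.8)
This is the relative, polyhedral counterpart of the toric fact that $U_\tau\hookrightarrow U_\sigma$ is an open embedding exactly when $\tau$ is a face of $\sigma$, in which case $U_\tau=(U_\sigma)_{\chi^{u_0}}$ for any lattice point $u_0$ in the relative interior of $\tau^\perp\cap\sigma^\vee$. The plan is to reduce to a situation over an affine base and then carry out this toric computation coefficientwise.

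\emph{Reduction to an affine base.} Being an open embedding can be checked locally on $\tilde{\pdv}(\D)$; pulling back an affine open cover of $Y$ along the good quotient $\pi\colon\tilde{\pdv}(\D)\to\loc\D$ and using $\tilde{\pdv}(\D)|_{\pi^{-1}(U)}=\tilde{\pdv}(\D|_U)$ for open $U\subseteq Y$, we reduce to $Y$ affine. Since $\D'\subset\D$ forces $\loc\D'\subseteq\loc\D$, the morphism factors through the open subset $\pi^{-1}(\loc\D')=\tilde{\pdv}(\D|_{\loc\D'})$, and $\D'\prec\D$ holds if and only if it does after restriction to $\loc\D'$; so we may assume in addition $\loc\D'=\loc\D=Y$ with all coefficients $\Delta_D,\Delta'_D$ nonempty. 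Writing $\sigma'$ for the tail cone of $\D'$, the morphism becomes $\spec$ of the inclusion
\[
A:=\bigoplus_{u\in\sigma^\vee\cap M}\Gamma\bigl(Y,\CO(\D(u))\bigr)\;\hookrightarrow\;\bigoplus_{u\in(\sigma')^\vee\cap M}\Gamma\bigl(Y,\CO(\D'(u))\bigr)=:A',
\]
well defined because $\Delta'_D\subseteq\Delta_D$ gives $\D(u)\le\D'(u)$, hence $\CO(\D(u))\subseteq\CO(\D'(u))$, for $u\in\sigma^\vee\cap M$.

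\emph{Face $\Rightarrow$ open embedding.} Assume $\D'\prec\D$. Then each $\Delta'_D$ is a face of $\Delta_D$ with recession cone $\sigma'$, so its inner normal cone lies in $\lambda:=\sigma^\vee\cap(\sigma')^\perp$, the face of $\sigma^\vee$ dual to $\sigma'$. After passing to affine opens of $Y$ small enough that the Cartier divisors $\D(u_0)$ become principal and, if necessary, covering $\tilde{\pdv}(\D')$ by finitely many such pieces, one chooses a lattice point $u_0\in\operatorname{relint}(\lambda)$ with $\Delta'_D=\Delta_D^{u_0}:=\{v\in\Delta_D:\langle u_0,v\rangle\ \text{minimal}\}$ for all relevant $D$. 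Writing $\D(u_0)=\divisor(g_0)$, the section $g_0^{-1}\chi^{u_0}\in A$ becomes a unit in $A'$, and the heart of the proof is the localization identity $A'=A_{g_0^{-1}\chi^{u_0}}$, i.e.\ for every $u'\in(\sigma')^\vee\cap M$,
\[
\Gamma\bigl(\CO(\D'(u'))\bigr)=\bigcup_{j\gg 0}\Gamma\bigl(\CO(\D(u'+ju_0)-j\,\D(u_0))\bigr).
\]
This follows from two observations: $u'+ju_0\in\sigma^\vee$ once $j\gg 0$ (as $u_0$ is strictly positive on $\sigma\setminus\sigma'$), and for $j\gg 0$ the face of $\Delta_D$ minimizing $u'+ju_0$ is contained in $\Delta_D^{u_0}=\Delta'_D$, which turns the coefficientwise inequality $\D(u'+ju_0)-j\,\D(u_0)\le\D'(u')$ into an equality in the limit. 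This coefficientwise computation, together with bookkeeping over the finitely many prime divisors in the support, is the step I expect to be the main obstacle. Granting it, $\spec A'\to\spec A$ is a principal localization, hence an open embedding, and the general case is obtained by gluing the affine pieces.

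\emph{Open embedding $\Rightarrow$ face.} Conversely, if the morphism is an open embedding then its image is a dense $T$-invariant open subset of $\tilde{\pdv}(\D)$ to which $\tilde{\pdv}(\D')$ is isomorphic. The polyhedral coefficients can be read off $\tilde{\pdv}(\D)$ intrinsically: for a prime divisor $D\subseteq Y$ and the $T$-invariant prime divisor $\tilde D$ lying over it, the order of vanishing along $\tilde D$ of the canonical semi-invariant rational function of weight $u$ equals $\min_{v\in\Delta_D}\langle u,v\rangle$. An isomorphism onto a $T$-invariant open subset preserves the function field and these divisorial valuations --- this is checked at the generic point of $\tilde D$, using the valuative criterion when the image does not meet all of $\tilde D$, and observing that if the image misses $\tilde D$ altogether then $\Delta'_D=\emptyset$, which is compatible with $\emptyset\prec\Delta_D$. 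Hence the support function $u\mapsto\min_{v\in\Delta'_D}\langle u,v\rangle$ is the restriction to $(\sigma')^\vee$ of the one attached to $\D$; combined with $\Delta'_D\subseteq\Delta_D$, a short convex-geometry argument forces each $\Delta'_D$ to equal the face $\Delta_D^{u_0}$ for a suitable $u_0$, i.e.\ $\D'_y\prec\D_y$ for all $y$. The delicate point here is guaranteeing that the open image is large enough to realize all of these valuations; for $Y$ a curve, the case used in the rest of the paper, this is substantially more transparent.
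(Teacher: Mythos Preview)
The paper does not prove this proposition at all: it is stated with an explicit citation to \cite{divfans}, Prop.~3.4 and Rem.~3.5, and the text moves on immediately to the definition of fansy divisors. There is therefore no ``paper's own proof'' to compare your attempt against.

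That said, your plan is the right one and tracks the argument in the cited reference: reduce to an affine base, then show that $\tilde\pdv(\D')\to\tilde\pdv(\D)$ is the principal localization at a semi-invariant $g_0^{-1}\chi^{u_0}$, exactly as in the toric case $U_\tau=(U_\sigma)_{\chi^{u_0}}$. One point deserves more care than you give it. You assert that one can choose a single $u_0\in\operatorname{relint}(\sigma^\vee\cap(\sigma')^\perp)$ with $\Delta'_D=\operatorname{face}(\Delta_D,u_0)$ for \emph{all} relevant $D$ simultaneously. With the purely pointwise definition of $\D'\prec\D$ recorded in this paper, that is not automatic: different coefficients could a priori require $u_0$'s in disjoint normal cones, and shrinking the affine base does not change the polyhedral coefficients at the points that remain. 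In \cite{divfans} this is exactly what Remark~3.5 addresses (and why it is cited alongside Prop.~3.4): the face relation used there is arranged so that a common $u_0$ exists, and for p-divisors on a curve the pointwise condition together with the positivity constraints on $\deg\D$ does force the normal cones to intersect. You should make that step explicit rather than fold it into ``covering $\tilde\pdv(\D')$ by finitely many such pieces,'' which does not by itself produce a common $u_0$.

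For the converse direction your valuation-theoretic idea is sound, and again mirrors the argument in \cite{divfans}: an open embedding identifies the $T$-invariant discrete valuations, hence the support functions $u\mapsto\min_{v\in\Delta_D}\langle u,v\rangle$ and $u\mapsto\min_{v\in\Delta'_D}\langle u,v\rangle$ agree on $(\sigma')^\vee$, which together with $\Delta'_D\subseteq\Delta_D$ forces $\Delta'_D$ to be a face. The ``delicate point'' you flag about whether the image meets enough divisors is genuine in higher dimension but, as you note, is harmless over a curve, which is the only case the present paper uses.
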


\begin{defn}\label{sec:def-fansy-div}
  Consider a smooth projective curve $Y$. A {\em fansy divisor} is 
  a formal finite sum
  $$\fan = \sum_{P \in Y} \fan_P \otimes Z$$
such that:
  \begin{enumerate}
  \item $\fan_P$ are polyhedral subdivisions covering $N_\QQ$ and sharing a common tail fan;
  \item Finite means here that for all but finitely many points, $\fan_P$ equals the tail fan.
  \end{enumerate}
\end{defn}

Consider a finite set of polyhedral divisors $\mathcal{S}$, such that $\D \succ \D' \cap \D \prec \D'$ for every pair $\D,\D' \in \mathcal{S}$.
Assume furthermore that their polyhedral coefficients $\D_P$ form the subdivisions $\fan_P$ of a fansy divisor.

From such a set we may construct a scheme $\tilde{\pdv}(\fan)$ by gluing $\pdv(\D)$s via $$\tilde{\pdv}(\D) \leftarrow \tilde{\pdv}(\D \cap \D') \rightarrow \tilde{\pdv}(\D').$$
Note that we had to check the cocycle condition, this is done in 
\cite[Thm. 5.3]{divfans}. From theorem~7.5 ibid. we know that we get a complete variety this way.

This variety is uniquely determined by the underlying fansy divisor. Different sets $\mathcal{S}$ correspond to different open coverings. Therefore, we may denote the resulting variety by $\tilde{\pdv}(\fan)$. 

Theorem~5.6 in \cite{divfans} tell us that for every normal $T$-variety $X$  with $\dim X = \dim T +1$ we may find a fansy divisor $\fan$ and a proper birational map $\tilde{\pdv}(\fan) \rightarrow X$.
If $X$ admits a good quotient under the torus action this morphism turns out to be the identity.

\begin{rem}
\label{sec:rem-suitable-covering}
For a fansy divisor $\fan$ and an open covering $\{U_i\}_{i\in I}$ of $Y$ we can find a set $\mathcal{S}$ as above, such that for every $\D \in \mathcal{S}$ there is a $i \in I$ such that $\loc \D = U_i$.
\end{rem}

\begin{exmp}
  Let $Y$ be a smooth projective curve and $Q_1,Q_2 \in Y$ two points. We consider the fansy divisor $\fan$ given by the coefficients in figure \ref{fig:surface-fd}. $\tilde{\pdv}(\fan)$ is a complete surface with one dimensional torus action.
  \begin{figure}[htbp]
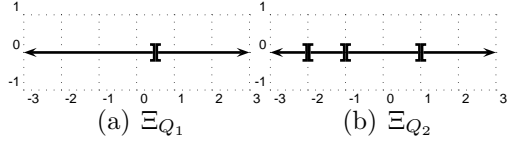

    \centering
    \subfigure[$\fan_{Q_1}$]{\exmpi}
    \subfigure[$\fan_{Q_2}$]{\exmpii}
    \caption{The fansy divisor of a surface}\label{fig:surface-fd}
  \end{figure}
\end{exmp}

\begin{exmp}
  We consider the fansy divisor on $\mathbb{P}^1$ given by the coefficients in figure~\ref{fig:threefold}. $\tilde{\pdv}(\fan)$ is a complete (singular) threefold with two-dimensional torus action.
  \begin{figure}[htbp]
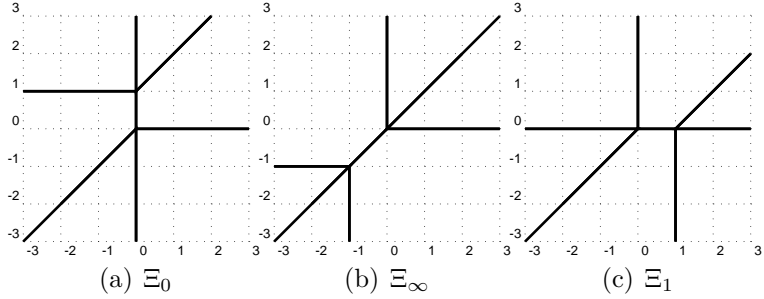

    \centering
    \subfigure[$\fan_0$]{\cotangfannull}
    \subfigure[$\fan_\infty$]{\cotangfaninfty}
    \subfigure[$\fan_1$]{\cotangfanone}
    \caption{The fansy divisor of a threefold}\label{fig:threefold}
  \end{figure}
\end{exmp}

\section{Divisors and Intersection Theory on $T$-Varieties}
\label{sec:divisors_and_intersections}
From now on we shall only consider torus actions of codimension one; we will study them via fansy divisors.

\subsection{Cartier divisors}
\label{sec:cartier-divisor}
Let $\Sigma \subset N_\QQ$ be a complete polyhedral subdivision of $N$ consisting of tailed polyhedra. We consider continuous functions
$h:|\Sigma| \rightarrow \QQ$  which are affine on every polyhedron in $\Sigma$. Let $\Delta \in \Sigma$ be a polyhedron with tail 
cone $\delta$. Then $h$ induces a linear function $h^\Delta_0$ on $\delta = \tail \Delta$ by defining $h^\Delta_0(v):= h(P+v)-h(P)$ for some $P \in \Delta$. We call $h^\Delta_0$ the linear part of $h|_\Delta$.

\begin{defn}
  An (integral) \textit{support function} on a polyhedral subdivision $\Sigma$ is a piecewise affine function as above with integer slope and integer translation. To be precise: for $v \in |\Sigma|$ and $k \in \NN$ such that $kv$ is a lattice point we have $k h(v) \in \ZZ$.
The group of support functions on $\Sigma$ is denoted by $\SF_\Sigma$.
\end{defn}

Let $\fan$ be a divisorial fan on a curve $Y$. For every $P \in Y$ we get a polyhedral subdivision $\fan_P$ consisting of polyhedral coefficients.  We consider $\SF(\fan)$, the group of  formal sums $\sum_{P \in Y} h_P P$ with 
\begin{enumerate}
\item $h_P \in \SF_{\fan_P}$ a support function of the $P$-slice of $\fan$.
\item all $h_P$ have the same linear part $h_0$.
\item $h_P$ differs from $h_0$ for only finitely many points $P \in Y$.\\
{\em We refer to this fact by calling this sum finite and we omit those summands which equal $h_0$.}
\end{enumerate}

\begin{defn}
	A support function $h \in \SF(\fan)$ is called principal if $h(v) = \langle u, v \rangle + D$, with $u \in M$ and $D$ is a principal divisor on $Y$.
By  $h(v) = \langle u, v \rangle + D$ we mean that $h_P(v)= \langle u, v \rangle + a_P$, where $D=\sum_P a_P P$.
\end{defn}

If $h=\sum h_P P \in \SF(\fan)$ we consider a covering
$\{Y_i\}$ of $Y$ such that $P$ is a principal divisor on the $Y_i$ for every $P \in Y$ with $h_P \neq h_0$, and such that every $Y_i$ contains at most one of these points.

 We may find a set $\mathcal{S}$ as above which is compatible with this covering and induces $\fan$.
\label{sec:construction-cartier-div}
Now we choose a $\D \in \mathcal{S}$ with $\loc \D = Y_i$ and $h_P \neq h_0$. $h_P$ is an affine function on every polyhedron in $\fan_P$ so we get $-h_P|_{\D_P}(v) = \langle v, u \rangle + a$ for some $u \in M$ and $a\in \ZZ$. Assume that $\divisor(f) = a P$ on $Y_i$; then $f \cdot \chi^u \in K(\tilde{\pdv}(\D))^T$ 
defines a $T$-invariant principal divisor $H_\D$ on $\tilde{\pdv}(\D)$. These principal divisors fit together to a Cartier divisor $D_h$ on $\tilde{\pdv}(\fan)$.
Here $K(\tilde{\pdv}(\D))^T := \bigoplus_{u \in M} K(Y) \cdot \chi^u \supset \Gamma(\tilde{\pdv}(\D))$ denotes the ring of invariant rational functions on $\tilde{\pdv}(\D)$. 
In this way the group of integral support functions on $\fan$ corresponds to that of invariant Cartier divisors on $\tilde{\pdv}(\fan)$.

\subsection{Weil divisors}
\label{sec:weil-divisors}
In general there are two types of $T$-invariant prime divisors, namely those which consist
\begin{enumerate}
\item of orbit closures of dimension $\dim T$;\label{item:dimT}
\item and of orbit closures of dimension $\dim T -1$.\label{item:dimT-1}
\end{enumerate}

\begin{prop}
If $\D$ is a polyhedral divisor on a curve with tailcone $\sigma$, there are one-to-one correspondences 
\begin{enumerate}
\item between prime divisors of type~\ref{item:dimT} and pairs $(P,v)$ with $P$ a point on $Y$ 
  and $v$ a vertex of $\Delta_P$;
\item between prime divisors of type~\ref{item:dimT-1} and rays $\rho$ of $\sigma$ with $\deg \D \cap \rho = \emptyset$.
\end{enumerate}
\end{prop}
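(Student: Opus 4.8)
The plan is to identify the $T$-invariant prime divisors on $\tilde X := \tilde\pdv(\D)$ by passing to the good quotient $\pi\colon \tilde X \to \loc\D$ and classifying torus orbits according to their dimension, exactly as in the toric case. Since $\dim T = \dim N$ and $\dim\tilde X = \dim N + 1$, a $T$-invariant prime divisor is the closure of an orbit of dimension $\dim N$ (type~\ref{item:dimT}) or $\dim N - 1$ (type~\ref{item:dimT-1}). The key tool is the orbit–cone–type correspondence for the affine model $\tilde\pdv(\D)$: over a point $y \in Y$, the fiber $\pi^{-1}(y)$ is (up to the torus in the horizontal direction) the toric variety associated to the normal fan data coming from the fiber polyhedron $\D_y = \sum_{y\in D}\Delta_D$, while the behaviour of $\deg\D = \sum_D \Delta_D$ governs which orbits extend horizontally across $Y$.

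For part (1), I would argue that a prime divisor which dominates $Y$ under $\pi$ cannot have type~\ref{item:dimT} (its general fiber would be a torus orbit of full dimension $\dim N$ in a $(\dim N)$-dimensional fiber, hence the whole fiber, so the divisor is all of $\tilde X$). Hence a type~\ref{item:dimT} divisor lies over a single closed point $P \in Y$, and inside the fiber $\pi^{-1}(P)$ it is a codimension-one orbit closure of the associated toric variety. By the standard toric dictionary these correspond to the rays of the fan of $\pi^{-1}(P)$, which by the construction of $\tilde\pdv(\D)$ are exactly the vertices $v$ of $\Delta_P$ (each vertex $v$ contributes a ray through $(v,1)$ in the cone over $\Delta_P$; one should also fold in the translation by $\ord_P$ coming from the $D$'s passing through $P$, but the vertices of $\Delta_P = \sum_{P\in D}\Delta_D$ are what matter). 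I would cite the description of global sections $\CO(\D(u))$ and the grading to make the correspondence $(P,v)\leftrightarrow D_{(P,v)}$ explicit, with $D_{(P,v)}$ cut out by vanishing along the corresponding orbit.

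For part (2), a type~\ref{item:dimT-1} divisor $E$ must dominate $Y$, since otherwise it would sit inside a single fiber $\pi^{-1}(P)$ of dimension $\dim N$ as an orbit closure of dimension $\dim N - 1$, hence codimension two in $\tilde X$, contradicting that $E$ is a divisor. So $E\to Y$ is dominant with one-dimensional general fiber and generic $(\dim N - 1)$-dimensional orbit. Such an $E$ corresponds to a ray $\rho$ of the tail cone $\sigma$: the orbit of dimension $\dim N - 1$ associated to $\rho$ sweeps out a divisor horizontally precisely when it is not "filled in" by the polyhedral coefficients over any point, i.e. when the ray $\rho$ meets no $\Delta_D$-part forcing a vertex there — the precise condition being $\deg\D \cap \rho = \emptyset$, since $\deg\D = \sum_D\Delta_D$ records the total vertical obstruction. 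When $\deg\D\cap\rho\neq\emptyset$ the corresponding locus has dimension dropping in some fibers and fails to be a prime divisor (it is contained in a vertical divisor of type~\ref{item:dimT}).

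The main obstacle I anticipate is making the second correspondence rigorous: one must carefully track the interaction between the tail cone $\sigma$, its rays, and the polyhedral coefficients $\Delta_D$, and verify that the condition $\deg\D\cap\rho = \emptyset$ is exactly what separates "$\rho$ gives a genuine horizontal prime divisor" from "$\rho$'s locus is reducible or lower-dimensional." This is essentially a fiberwise check using the toric orbit structure of each $\pi^{-1}(y)$ together with a flatness/semicontinuity argument for how orbit dimensions vary over $Y$; the first correspondence, by contrast, is a fairly direct transcription of the toric ray–divisor dictionary applied fiberwise.
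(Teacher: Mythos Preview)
Your approach coincides with the paper's: pass to the quotient $\pi\colon\tilde X(\D)\to\loc\D$ and read off the invariant divisors from the fiberwise orbit structure. The paper's proof, however, is far terser than your outline. It simply invokes the orbit description from \cite{MR2207875}, which says that faces $F\prec\D_y$ correspond to $T$-invariant closed subvarieties of codimension $\dim F$ in the fiber $\pi^{-1}(y)$, and then applies this at closed points $P$ (where vertices of $\Delta_P$ are the $0$-dimensional faces, yielding the irreducible components of $\pi^{-1}(P)$, i.e.\ the type~\ref{item:dimT} divisors) and at the generic point $\eta$ (where the face structure of $\D_\eta$ together with the degree data picks out the type~\ref{item:dimT-1} divisors). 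That is the entire argument.

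Your treatment of part~(1) is correct and amounts to the closed-point case of this correspondence, only spelled out more explicitly via the dominate/not-dominate dichotomy. For part~(2) you have the right skeleton but, as you yourself anticipate, your justification of the condition $\deg\D\cap\rho=\emptyset$ is not quite right. The mechanism is not that ``the locus has dimension dropping in some fibers and is contained in a vertical divisor''; rather, the condition records exactly when the closure of the codimension-one orbit in the generic fiber remains a prime divisor in $\tilde X(\D)$ versus getting contracted. This is precisely the content worked out in \cite{MR2207875}, and the paper does not attempt to re-derive it inline---nor should you. Replace your flatness/semicontinuity sketch with a direct citation of the orbit--face correspondence from that reference applied at the generic point; the degree condition then falls out of their description rather than from an ad hoc fiberwise argument.
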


\begin{proof}
	Consider the quotient map $\pi:\tilde{\pdv} \rightarrow \loc \D$. In \cite{MR2207875} the orbit structure of the fibres of $\pi$ is described. Thus, we know that faces $F \prec \D_y$ correspond to $T$-invariant subvarieties  of codimension $\dim(F)$ in $\pi_y:=\pi^{-1}(y)$. The correspondences follow by using this for closed points and the generic point, respectively.
\end{proof}

 \begin{rem}
   We may also describe the ideals of prime divisors in terms of polyhedral divisors:
   \begin{enumerate}
   \item For prime divisors of type~\ref{item:dimT} corresponding to a vertex $(P,v)$, the ideal is given by
     $$I_{P,v} = \bigoplus_{u \in \sigma^\vee} \Gamma(Y,\CO(\D(u))) \cap \{f \mid \ord_P(f) > \langle v , u \rangle\}.$$
   \item For prime divisors of type~\ref{item:dimT-1}, the corresponding ideal is generated by all multidegrees which are
     not orthogonal to $\rho$:
     $$I_{\rho} = \bigoplus_{u \in \sigma^\vee  \setminus \rho^\perp} \Gamma(Y,\CO(\D(u))).$$
  \end{enumerate}
 \end{rem}

\begin{prop}\label{prop:cartier2weil}
	Let $h = \sum_P h_P$ correspond to the Cartier divisor $D_h$ on $\tilde{X}(\D)$. The corresponding Weil divisor is given by 
  $$-\sum_{\rho} h_0 (n_\rho) \rho - \sum_{(P,v)} \mu(v) h_P(v) (P,v),$$
 where $\mu(v)$ is the smallest integer $k \geq 1$ such that $k \cdot v$ is a lattice point. This lattice point is a multiple of the primitive lattice vector $n_v$: $\mu(v)v=\varepsilon(v) n_v$.
\end{prop}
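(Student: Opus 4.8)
The plan is to compute the divisor of the invariant rational function that locally defines $D_h$ and to read off its coefficients along the two types of invariant prime divisors identified in the previous proposition. Fix a point $P$ with $h_P \neq h_0$ (the generic point is handled the same way with $h_0$ in place of $h_P$), and work on an affine chart $\tilde{\pdv}(\D)$ with $\loc \D = Y_i$ as in the construction in \S\ref{sec:construction-cartier-div}. On such a chart $D_h$ is cut out by $f \cdot \chi^u$, where $-h_P|_{\D_P}(v) = \langle v, u\rangle + a$ on the maximal polyhedron of $\fan_P$ under consideration and $\divisor(f) = aP$ on $Y_i$. The task is to evaluate $\ord_{(P,v)}(f\chi^u)$ and $\ord_\rho(f\chi^u)$ for vertices $v$ of $\D_P$ and rays $\rho$ of $\sigma$ with $\deg\D\cap\rho=\emptyset$.

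First I would treat the divisors of type~\ref{item:dimT-1}, corresponding to a ray $\rho$ with primitive generator $n_\rho$. Using the description of $I_\rho$ from the remark (it is generated by the multidegrees $u'$ with $\langle u', n_\rho\rangle \neq 0$), one sees that $\chi^{u'}$ vanishes to order $\langle u', n_\rho\rangle$ along the prime divisor attached to $\rho$, while $f \in K(Y)$ — being pulled back from the base — contributes nothing along a horizontal divisor. Hence $\ord_\rho(f\chi^u) = \langle u, n_\rho\rangle$. Since $u$ is exactly the linear part of $-h_P$ on the relevant maximal cone, which by condition~(2) in the definition of $\SF(\fan)$ is the common linear part $h_0$, this order equals $-h_0(n_\rho)$ up to sign bookkeeping; matching conventions gives the coefficient $-h_0(n_\rho)$ in the formula.

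Next I would handle the divisors of type~\ref{item:dimT}, corresponding to a pair $(P,v)$ with $v$ a vertex of $\D_P$. Here I would localize so that $P$ is principal on $Y_i$, pick a uniformizer, and use the ideal description $I_{P,v} = \bigoplus \Gamma(Y,\CO(\D(u')))\cap\{g \mid \ord_P g > \langle v,u'\rangle\}$ to compute the valuation of $f\chi^{u'}$ along $(P,v)$: the function $\chi^{u'}$ picks up $\langle v, u'\rangle$ and $f$ picks up $\ord_P(f)$, but the valuation is normalized by the index $\mu(v)$ (equivalently $\varepsilon(v)$), since the primitive vector in the direction of $v$ is $n_v$ with $\mu(v)v = \varepsilon(v) n_v$, so the divisor $(P,v)$ "counts" in steps of $1/\mu(v)$. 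Substituting $\langle v, u\rangle + a = -h_P(v)$ (by the defining equation of the chart, evaluated at the vertex $v$) and $a = \ord_P(f)$ yields the coefficient $-\mu(v)h_P(v)$.

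The main obstacle is the normalization constant $\mu(v)$: on the chart, the invariant function $f\chi^u$ is a genuine section, but the prime divisor $(P,v)$ need not be Cartier, so its associated discrete valuation is a rescaling of the "naive" valuation $g \mapsto \ord_P(g) - \langle v, \cdot\rangle$ by precisely the lattice index $\mu(v)$ of $v$ in the ray $\QQ_{\geq 0}v$. Pinning this down — i.e.\ checking that the valuation ring of $(P,v)$ sees the uniformizer with multiplicity $\mu(v)$ rather than $1$ — is the delicate point; once it is in place, the rest is a direct substitution using the chart equation $-h_P|_{\D_P}(v) = \langle v,u\rangle + a$ and $\divisor(f)=aP$, together with the gluing statement that these locally-defined divisors agree on overlaps (which follows since $h \in \SF(\fan)$ is globally defined and the $\chi^u$-exponents on adjacent maximal polyhedra differ by the affine-linear matching forced by continuity of $h$).
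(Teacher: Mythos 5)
Your outline correctly identifies the two affine cases (a vertex $(P,v)$ of $\D_P$ vs.\ a ray $\rho$ of the tail fan) and the right reduction: localize, read off the order of the local defining function $f\chi^u$ along each invariant prime divisor, and substitute the chart equation $-h_P(v)=\langle v,u\rangle + a$, $\divisor(f)=aP$. That is also the route the paper takes. The issue is that the one step you flag as ``the delicate point'' --- that the discrete valuation attached to $(P,v)$ rescales the naive quantity $\langle v,u\rangle + \ord_P(f)$ by exactly the factor $\mu(v)$ --- is not a peripheral normalization to be ``pinned down''; it is the entire content of the proposition, and your proposal offers only a heuristic (``counts in steps of $1/\mu(v)$'') in its place. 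To actually establish it one must exhibit a uniformizer for the valuation ring of $(P,v)$ and compute orders against it. The paper does this explicitly: after reducing to $\D$ with a single nontrivial coefficient $\Delta_P=\{v\}$ over a factorial affine base with local parameter $t_P$, it chooses a $\ZZ$-basis of $N$ with $e_1=n_v$, uses B\'ezout to find $a\mu(v)+b\varepsilon(v)=1$, and shows that $y=t_P^a\chi^{be_1^*}$ is an irreducible element of $\Gamma(\CO_X)=\Gamma(\CO_Y)[y,\,t_P^{\pm\varepsilon(v)}\chi^{\mp\mu(v)e_1^*},\,\chi^{\pm e_2^*},\ldots]$ cutting out $(P,v)$; then the $y$-order of $t_P^\alpha\chi^u$ is computed directly to be $\mu(v)(\langle u,v\rangle+\alpha)$. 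Without an argument of this kind, the proposal does not prove the formula --- it restates what needs to be proved.

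A smaller point: for the ray case you appeal to the ideal description $I_\rho$ to deduce that $\chi^{u'}$ vanishes to order $\langle u',n_\rho\rangle$, but the ideal description tells you only which multidegrees lie in the prime, not the multiplicities. Again, what is needed (and what the paper supplies) is a concrete local picture: $\D$ trivial with one-dimensional tail cone $\rho$, basis with $e_1=n_\rho$, so that $\Gamma(\CO_X)=\Gamma(\CO_Y)[\chi^{e_1^*},\chi^{\pm e_2^*},\ldots]$, $(\chi^{e_1^*})$ is the prime, and the $\chi^{e_1^*}$-order of $f\chi^u$ is the $e_1^*$-coefficient of $u$, i.e.\ $\langle u,n_\rho\rangle$. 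Your intuition that $f$ contributes nothing along a horizontal divisor is correct and does fall out of this computation, but as written it is an assertion rather than an argument.
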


\begin{proof}
  This is a local statement, so we will pass to a sufficiently small invariant open affine set which meets a particular prime divisor.
  If we translate this to our combinatorial language
and we consider a prime divisor corresponding to $(P,v)$ or $\rho$ then we have to choose a polyhedral divisor $\D' \prec \D \in\mathcal{S}$ such that $v$ is also a vertex of $\D'_P$ or $\rho$ is a ray in $\tail \D'$, respectively.

  So we restrict to following two (affine) cases:
  \begin{enumerate}
  \item  $\D$ is a polyhedral divisor with tail cone $\sigma=0$ and a single point $\Delta_P =\{v\} \subset N$ as the only nontrivial coefficient. Moreover, $Y$ is affine and factorial. In particular, $P$ is a prime divisor with (local) parameter $t_P$.
  \item $\D$ is the trivial polyhedral divisor with one dimensional tail cone $\rho$ over an affine locus $Y$.
  \end{enumerate}  
 
  In the first case we may choose $\ZZ$-Basis $e_1,\ldots, e_m$ of $N$ with 
  $e_1 = n_v$. Consider the dual basis $e^*_1,\ldots, e^*_m$.
By definition $\varepsilon(v)$ and $\mu(v)$ are coprime so we will find $a,b \in \ZZ$ such that
  $a \mu(v) + b \varepsilon(v) = 1$. In this situation $y:=t_P^a\chi^{be^*_1}$ is irreducible in 
  $$\Gamma(\CO_X) = \Gamma(\CO_Y)[y,t_P^{\pm \varepsilon(v)}\chi^{\mp \mu (v) e^*_1},\chi^{\pm e^*_2}, \ldots ,\chi^{\pm e^*_m}]$$ 
   and defines the prime divisor $(P,v)$.
  We consider an element $t_P^\alpha \chi^{u}$ with $u=\sum_i \lambda_i e^*_i$. 
  The $y$-order of $t_P^\alpha \chi^{u}$ is $$\varepsilon(v)\lambda_1 + \mu(v)\alpha = \mu(v)(\langle u, v \rangle+\alpha),$$ because
  $t_P^\alpha \chi^{u} = y^{\varepsilon(v)\lambda_1 + \mu(v)\alpha} (t_P^{- \varepsilon(v)}\chi^{\mu (v) e^*_1})^{\lambda_1 a + b \alpha}$, and
  $(t_P^{- \varepsilon(v)}\chi^{\mu (v) e^*_1})$ is a unit.
  
  In the second case we choose a $\ZZ$-basis  $e_1,\ldots, e_m$ of $N$ with $e_1 = n_\rho$. We once again consider the dual basis $e^*_1,\ldots, e^*_m$. In this situation
  $$\Gamma(\CO_X) = \Gamma(\CO_Y)[\chi^{e^*_1},\chi^{\pm e^*_2}, \ldots ,\chi^{\pm e^*_m}].$$
  Now $(\chi^{e^*_1})$  defines the prime divisor $\rho$ on $X$. For a principal divisor $f \cdot \chi^u$, the $\chi^{e^*_1}$-order equals the $e^*_1$-component of $u$, i.e. $\langle u, n_\rho \rangle$.
\end{proof}

\begin{exmp}
  For our threefold example we consider $D_h$ where $h_0,h_\infty,h_1$ are given by the tropical polynomials
\\
\begin{align*}
h_0=&\mtrop{0*x^{(-1,0)}+0*x^{(-1,1)}+0*x^{(0,1)}+0*x^{(1,0)}+1*x^{(1,-1)}+1*x^{(0,-1)}}\\
h_\infty=&\mtrop{(-2)*x^{(-1,0)}+(-2)*x^{(-1,1)}+(-1)*x^{(0,1)}+(-1)*x^{(1,0)}+}\\
&\qquad\qquad\qquad\qquad\qquad\qquad\qquad\mtrop{+(-2)*x^{(1,-1)}+(-2)*x^{(0,-1)}}\\
h_1=&\mtrop{1*x^{(-1,0)}+1*x^{(-1,1)}+0*x^{(0,1)}+0*x^{(1,0)}+0*x^{(1,-1)}+0*x^{(0,-1)}}
\end{align*}
where we are using the tropical semi-ring with operations $\oplus=\min,\odot=+$. These support functions are pictured in figure \ref{fig:3fold-support}. The Weil divisor corresponding to $D_h$ is $\sum_\rho D_\rho + 2D_{(\infty,0)}+ 2D_{(\infty,(-1,-1))}$. This is the anti-canonical divisor of $X:=\tilde{\pdv}(\fan)$ \cite{petersen-suess08}.

  \begin{figure}[htbp]
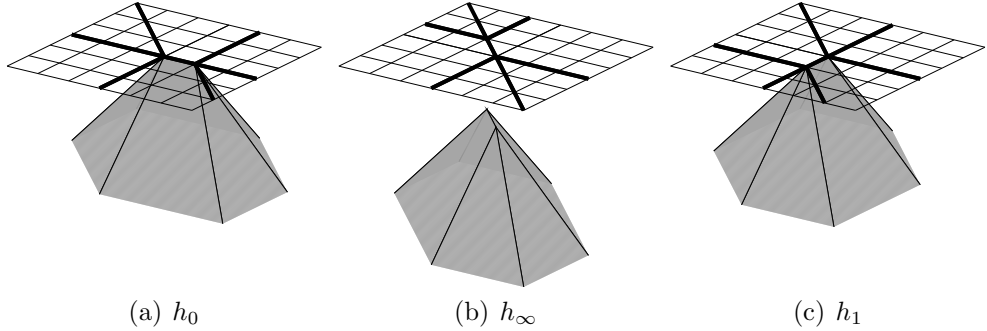

    \centering
    \subfigure[$h_0$]{\canDivNullh}
    \subfigure[$h_\infty$]{\canDivInftyh}
    \subfigure[$h_1$]{\canDivOneh}
\caption{Support functions for a $T$-threefold}\label{fig:3fold-support}
  \end{figure}
\end{exmp}

  \subsection{Global sections}
  For a support function $h$ on $X$ we may consider the $M$-graded vector space of global sections of $D_h$
  $$L(D_h) = \bigoplus_{u \in M} L(D_h)_u := \Gamma(X,\CO(D_h)).$$
  The {\em weight set} of $L(D_h)$ is defined as the set $\{u \in M \mid L(D)_u \neq 0\}$. 
  For a Cartier divisor given by $h \in \tcadiv(\fan)$ we will bound its weight set by a polyhedron as well as describe 
  the graded module structure of $L(D)$.

Consider a support function $h = \sum_P h_P P$ with linear part $h_0$. We define its associated polytope 
$$\Box_h := \Box_{h_0}:=\{u \in M_\mathbb{Q} \mid  \langle u, v \rangle  \geq  h_0(v) \; \forall_{v \in N}\}$$
 and associate a dual function $h^*:\Box_h \rightarrow \wdiv_\QQ Y$ via
$$h^*(u):=\sum_P h_P^*(u)P := \sum_P \minvert(u-h_P)P,$$ 
where $\minvert(u - h_P)$ denotes the minimal value of $u - h_P$ on the vertices of $\fan_P$.

\begin{rem} Let $h$ be a concave support function. Every affine piece of $h_P$ corresponds to a pair $(u,-a_u) \subset M \times \ZZ$. $h^*_P$ is defined to be the coarsest concave piecewise affine function with $h^*_P(u)=a_u$.

We can reformulate this in terms of the tropical semi-ring with operation $\oplus=\min,\odot=+$. We might think of the $h_P$ as given by tropical polynomials \tropical{\sum_{w \in I} (-a_w) * x^w}, then $\Box_{h}=\conv(I)$ and $h_P^*(w)=a_w$, i.e. $\graph_{h^*_P}$ is the reflected lower newton boundary of the tropical polynomial for $h_P$.
\end{rem}

\begin{defn}\label{def:divpoly}
  A {\em divisorial polytope} $h^*$
is a pair consisting of an ordinary  polytope $\Box_h \subset M_\QQ$ and a concave piecewise affine function $h^*:\Box_h \rightarrow \wdiv_\QQ Y$ such that
\begin{enumerate}
\item 
$\deg h^*(u) \geq 0$ for all vertices $u$ of $\Box_h$,
\item 
some multiple of $h^*(u)$ is principal in case of $\deg h^*(u) = 0$ for a vertex $u$.
\item $\Box_h$ is a lattice polytope as is $\conv (\graph_{h^*_P})$ for each $P\in Y$.\label{item:integral}
\end{enumerate}
\end{defn}

Let $\Box_g,\Box_h\in M_\mathbb{Q}$ be polytopes. For any concave piecewise affine functions $g^*:\Box_g\rightarrow \wdiv_\QQ Y$ and  $h^*:\Box_h \rightarrow \wdiv_\QQ Y$ we define their sum $g^*+h^*$ to be the piecewise affine concave function on $\Box_g + \Box_h$ given by
  $$(g^*_P + h^*_P)(u)=\max\{h^*_P(w)+g^*_P(w') \mid u=w+w'\}.$$

\begin{rem}\label{rem:h*sum}
For $g,h\in SF(\fan)$, one easily checks that $$\Box_g+\Box_h\subset \Box_{g+h}$$ and that $$g_P^*(u)+h_P^*(u)\leq (g+h)_P^*(u)$$ for all $P\in Y$ and all $u\in
 \Box_g+\Box_h$. Furthermore, if $h_P$ and $g_P$ are convex, they correspond to tropical polynomials \tropical{f},\tropical{f'}. It follows then that $(g+h)_P$ corresponds to \tropical{f \cdot f'}. Its reflected lower newton boundary is exactly the graph of $(g+h)_P^*$, thus the equality
  $$(g+h)_P^* = g_P^*+h_P^*$$
  holds.
\end{rem}

To a divisorial polytope $h^*$ we might associate a fansy divisor $\fan$ and support function $h$ on $\fan$ such that $h^*$ corresponds to $h$ in the way given above. Indeed, to every $h^*_P$ we can associate a tropical polynomial 
  \tropical{f:=\sum_{(u,a_u)} (-a_u) * x^u}, where $(u,a_u)$ runs over the vertices of $\graph_{(h^*_P)}$. This polynomial induces via evaluation a piecewise affine function and a polyhedral subdivision $\fan_P$ of $N$.
  
\begin{rem}
If we remove condition \ref{item:integral} from the definition of a divisorial polytope (definition \ref{def:divpoly}), the association in the above paragraph gives us a $\mathbb{Q}$-Cartier divisor.
\end{rem}

For every fansy divisor there exists a smooth refinement, i.e. a fansy divisor $\fan'$ such that every $\fan'_P$ is a refinement of $\fan_P$ and  $\tilde{\pdv}(\fan')$ is smooth \cite{suess08}. Every support function $h$ on $\fan$ is obviously also a support function on $\fan'$. Thus, for a given divisorial polytope $h^*$ we might alway consider a smooth fansy divisor $\fan$ and a support function $h$ on it such that the associated dual function equals $h^*$.

\begin{exmp}
  We now revisit our threefold example. Figure \ref{fig:3fold-h*} shows a sketch of $h^*$. We show a refinement of the fansy divisor in figure \ref{fig:3fold-refined} which gives a smooth threefold. 
    \begin{figure}[htbp]
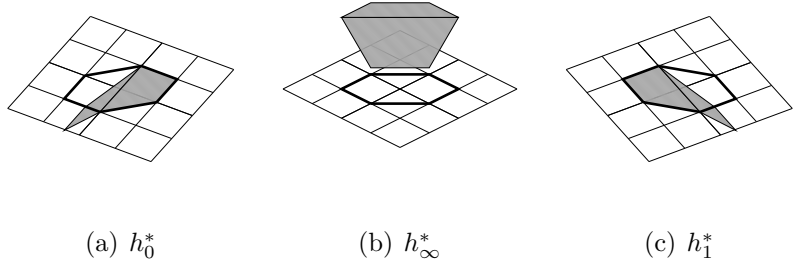

      \centering
      \subfigure[$h^*_0$]{\canDivNullp}
      \subfigure[$h^*_\infty$]{\canDivInftyp}
      \subfigure[$h^*_1$]{\canDivOnep}
	\caption{$h^*$ for a $T$-threefold}\label{fig:3fold-h*}
    \end{figure}
  
    \begin{figure}[htbp]
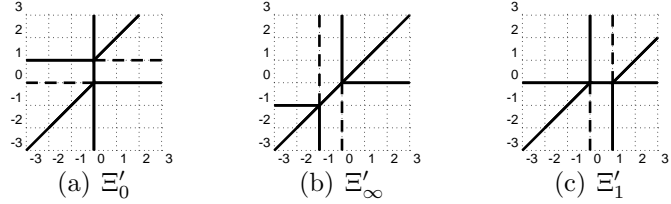

      \centering
      \subfigure[$\fan'_0$]{\cotangfannullsmooth}\hspace{3em}
      \subfigure[$\fan'_\infty$]{\cotangfaninftysmooth}\hspace{3em}
      \subfigure[$\fan'_1$]{\cotangfanonesmooth}
      \caption{A refined polyhedral divisor}\label{fig:3fold-refined}
\end{figure}

\end{exmp}

  \begin{prop}\label{prop:global_sections}
    Let $h \in SF(\fan)$  be a Cartier divisor with linear part $h_0$. Then
    \begin{enumerate}
    \item The weight set of $L(D_h)$ is a subset of $\Box_{h}$.
    \item for $u \in \Box_{h}$ we have $$L(D_h)_u = \Gamma(Y,\CO(h^*(u))).$$
    \end{enumerate}
  \end{prop}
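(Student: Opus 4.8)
The plan is to prove the two parts separately and to reduce everything to data that is already available. Write $X:=\tilde{\pdv}(\fan)$. Since $\CO(D_h)$ is the divisorial sheaf of the $T$-invariant Weil divisor $D_h$ on the normal variety $X$, the $M$-graded module $L(D_h)$ decomposes as $\bigoplus_u L(D_h)_u$, and a homogeneous element of degree $u\in M$ has the shape $f\cdot\chi^u$ with $f\in K(Y)^*$ (the degree-$u$ part of the field of invariant rational functions is $K(Y)\chi^u$). Thus $L(D_h)_u\cong\{f\in K(Y)\mid \divisor(f\chi^u)+D_h\geq 0\}$, and the whole problem reduces to comparing, for each relevant prime divisor, the order of $\divisor(f\chi^u)$ with the coefficient of $D_h$ there.

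For part (1), fix a finite set $\mathcal S$ inducing $\fan$, so $X$ is covered by the affine charts $\tilde{\pdv}(\D)$, $\D\in\mathcal S$, with $\Gamma(\tilde{\pdv}(\D),\CO)=\bigoplus_{w\in(\tail\D)^\vee\cap M}\Gamma(\loc\D,\CO(\D(w)))$ by construction. As recalled in Section~\ref{sec:construction-cartier-div}, $D_h$ is principal on each chart: $D_h|_{\tilde{\pdv}(\D)}=\divisor(f_\D\chi^{u_\D})$, where $-u_\D$ is the linear part of $h_0$ on the (pointed) cone $\tail\D$. If $L(D_h)_u\neq 0$, the corresponding nonzero section $f\chi^u$ restricts to a nonzero section on each chart, i.e. $f_\D\chi^{u_\D}\cdot f\chi^u$ is a nonzero homogeneous element of degree $u+u_\D$ of $\Gamma(\tilde{\pdv}(\D),\CO)$; hence $u+u_\D\in(\tail\D)^\vee$, that is, $\langle u,v\rangle\geq h_0(v)$ for all $v\in\tail\D$. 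Running over $\D\in\mathcal S$, whose tail cones include all maximal cones of the common tail fan and therefore cover $N_\QQ$, gives $\langle u,v\rangle\geq h_0(v)$ for all $v$, i.e. $u\in\Box_{h_0}=\Box_h$.

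For part (2), fix $u\in\Box_h$. By Proposition~\ref{prop:cartier2weil}, $D_h$ equals the Weil divisor $-\sum_\rho h_0(n_\rho)\rho-\sum_{(P,v)}\mu(v)h_P(v)\,(P,v)$, and from its proof we extract the order formulas $\ord_\rho(f\chi^u)=\langle u,n_\rho\rangle$ (since $f\in K(Y)$ has order $0$ along the horizontal divisor $\rho$) and $\ord_{(P,v)}(f\chi^u)=\mu(v)\big(\ord_P f+\langle u,v\rangle\big)$ (write $f$ near $P$ as a unit times a power of the local parameter and apply the first affine case of that proof). As $\divisor(f\chi^u)+D_h$ is $T$-invariant, it is effective iff it has nonnegative order along every $\rho$ and every $(P,v)$. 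The $\rho$-conditions read $\langle u,n_\rho\rangle\geq h_0(n_\rho)$ and hold automatically since $u\in\Box_{h_0}$. The $(P,v)$-conditions read $\ord_P f\geq h_P(v)-\langle u,v\rangle$, and, taken over all (finitely many) vertices $v$ of $\fan_P$, become $\ord_P f\geq -\minvert(u-h_P)=-h^*_P(u)$, i.e. $\divisor(f)+h^*(u)\geq 0$. Therefore $L(D_h)_u=\{f\chi^u\mid f\in\Gamma(Y,\CO(h^*(u)))\}\cong\Gamma(Y,\CO(h^*(u)))$, which is the claim.

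The main obstacle is the order computation $\ord_{(P,v)}(f\chi^u)=\mu(v)(\ord_P f+\langle u,v\rangle)$ for a general function $f\in K(Y)^*$ and a general vertex $(P,v)$ of $\fan_P$: one must shrink to the affine, factorial ``single vertex'' model used in Proposition~\ref{prop:cartier2weil} and check that the unit part of $f$ contributes nothing to the order along $(P,v)$. A subsidiary point, used in part (1), is that the tail cones $\tail\D$ ($\D\in\mathcal S$) genuinely cover $N_\QQ$; this follows from completeness of the tail fan and the requirement that $\mathcal S$ induces $\fan$. The remaining verifications — that $\minvert(u-h_P)$ is finite and attained, and that $\CO$ of the $\QQ$-divisor $h^*(u)$ is computed by rounding down the coefficients — are routine.
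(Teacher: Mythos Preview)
Your argument is correct and, in substance, the same as the paper's: both reduce to the effectivity condition $\divisor(f\chi^u)+D_h\geq 0$, compute the orders of $f\chi^u$ along the $T$-invariant prime divisors via Proposition~\ref{prop:cartier2weil}, and then read off the ray inequalities (forcing $u\in\Box_h$) and the vertex inequalities (forcing $\ord_P f\geq -h^*_P(u)$, i.e.\ $f\in\Gamma(Y,\CO(h^*(u)))$). Your part (2) is identical to the paper's.

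The one genuine variation is in part (1): the paper deduces $u\in\Box_h$ directly from the ray conditions $\langle u,n_\rho\rangle\geq h_0(n_\rho)$, whereas you argue chart-by-chart that $f_\D\chi^{u_\D}\cdot f\chi^u$ lies in the graded coordinate ring of $\tilde{\pdv}(\D)$, hence $u+u_\D\in(\tail\D)^\vee$, and then sweep over all maximal tail cones. This is a harmless and arguably cleaner route: it sidesteps the question of whether \emph{every} ray of the tail fan yields a prime divisor (recall only rays $\rho$ with $\rho\cap\deg\D=\emptyset$ do), and it gives $\langle u,v\rangle\geq h_0(v)$ uniformly on $N_\QQ$ without further case analysis. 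The trade-off is that it invokes the chart description and the local trivialization of $D_h$ rather than the global Weil divisor formula alone.
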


  \begin{proof}
    By definition of $\CO(D_h)$ we have
    $$\Gamma(X,\CO(D_h))^{T} = \{ \chi^u f  \mid  \divisor(\chi^u f) - \sum_{\rho} h_0 (n_\rho) \rho - \sum_{(P,v)} \mu(v) h_P(v) (P,v) \geq 0\}.$$ But $\divisor(\chi^u f) = \sum_{\rho} \langle u, n_\rho \rangle \rho + \sum_{(P,v)} \mu(v) (\langle u, v \rangle + \ord_P(f)) (P,v)$, so for $\chi^u f \in L(h)$ we get the following bounds:
    \begin{enumerate}
    \item $\langle u, n_\rho \rangle \geq h_0(n_\rho) \; \forall_\rho$
    \item $\ord_P(f) + \langle u, v \rangle \geq  h_P(v) \; \forall_{(P,v)}$
    \end{enumerate}
    The first implies that $u \in \Box_{h}\cap M$, the second that $\ord_P(f) + (u - h_P)(v) \geq 0  \; \forall\  {(P,v)}$.
  \end{proof}

\begin{defn}
  For a cone $\sigma \in \fan_0^{(n)}$ of maximal dimension in the tail fan and a $P \in Y$ we get exactly one polyhedron $\Delta^\sigma_P \in \fan_P$ having tail $\sigma$.
  
  For a given concave support function $h = \sum h_P P$ 
  We have 
  $$h_P|\Delta^\sigma_P = \langle \cdot, u^h(\sigma) \rangle + a^h_P(\sigma).$$
  The constant part gives rise to a divisor on $Y$:
  $$h|_{\sigma}(0) := \sum_P a^h_P(\sigma) P.$$
\end{defn}

\begin{prop}
\label{sec:prop-ample}
  A $T$-Cartier divisor  $h = \sum h_P \in \tcadiv(\fan)$ is (semi-)ample if and only if all $h_P$ are strictly concave (concave) and $-h|_\sigma(0)$ is (semi-)ample for all tail cones $\sigma$, i.e. $\deg -h|_\sigma(0) = -\sum_P a^h_P (\sigma) > 0$ (or  a 
multiple of $-h|_\sigma(0)$ is principal).
\end{prop}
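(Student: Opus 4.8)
The plan is to test $D:=D_h$ against the $T$-invariant curves of $X=\tilde{\pdv}(\fan)$ and, for the converse implication, to exhibit global sections of a multiple of $D$ directly on $Y$ by means of Proposition~\ref{prop:global_sections}. Recall from the orbit description in \cite{MR2207875} that, apart from $X$ itself, the $T$-invariant curves on $X$ fall into two families: \emph{vertical} curves, lying in the (only locally toric) fibre over a point $P\in Y$ and corresponding to the codimension-one cells of the subdivision $\fan_P$; and \emph{horizontal} curves $C_\sigma$, dominating $Y$ and indexed by the maximal cones $\sigma$ of the common tail fan, where $C_\sigma$ is swept out by the points of the fibres corresponding to the unique polyhedron $\Delta^\sigma_P\in\fan_P$ with tail $\sigma$.

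For necessity, suppose $D$ is (semi-)ample. Intersecting $D$ with the vertical curves over a point $P$ and invoking the intersection formulae of this section — by which these numbers record the (lattice) amounts by which $h_P$ bends across the codimension-one cells of $\fan_P$ — positivity (resp.\ non-negativity) forces $h_P$ to be strictly concave (concave) for every $P$. For the horizontal curves, note first that the vertex $u^h(\sigma)\in\Box_h$ singled out by $h_0|_\sigma=\langle\,\cdot\,,u^h(\sigma)\rangle$ satisfies $\minvert(u^h(\sigma)-h_P)=-a^h_P(\sigma)$ for every $P$, so that $h^*(u^h(\sigma))=-h|_\sigma(0)$; the restriction of $D$ to $C_\sigma\cong Y$ is then the line bundle $\CO(-h|_\sigma(0))$, which inherits (semi-)ampleness, and since a divisor on a smooth projective curve is ample (semi-ample) exactly when its degree is positive (resp.\ positive, or zero with a principal multiple), the stated conditions on $-h|_\sigma(0)$ follow.

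For the converse, assume that every $h_P$ is strictly concave (concave) and that $-h|_\sigma(0)$ is (semi-)ample for each maximal $\sigma$, and fix $n\gg 0$ clearing denominators. By Proposition~\ref{prop:global_sections} we have $L(nD)_u=\Gamma\bigl(Y,\CO(nh^*(u))\bigr)$ for $u\in n\Box_h\cap M$, and $n\Box_h$ is a full-dimensional lattice polytope since $h_0$ is (strictly) concave; its vertices are the points $u^h(\sigma)$, at which $nh^*(u^h(\sigma))=-nh|_\sigma(0)$ is ample, resp.\ semi-ample, on $Y$. Concavity of $h^*$ propagates these positivity properties, so that $nh^*(u)$ is base-point free on $Y$ for every $u\in n\Box_h\cap M$. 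A computation with the above description of $L(nD)$, performed at the points of each orbit stratum and reduced on the fibres to the toric case — where strict concavity of $h_P$ is precisely toric ampleness — then shows that the $T$-eigensections coming from the spaces $\Gamma(Y,\CO(nh^*(u)))$ have no common zero, so $nD$ is base-point free and $D$ is semi-ample. In the ample case, strict concavity of the $h_P$ together with ampleness of the boundary bundles $\CO(-nh|_\sigma(0))$ on $Y$ makes the associated morphism separate points and tangent vectors along every orbit, so that after possibly enlarging $n$ the divisor $D$ is very ample.

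The main difficulty lies in the converse direction. Because the contraction $X\to Y$ need not exist globally and the fibres are only locally toric, the base-point-freeness and separation arguments cannot be run on a single global fibration but must be assembled from the affine charts $\tilde{\pdv}(\D)$, matching the global combinatorial data $\Box_h$ and $h^*$ with the local toric data $h_P$; moreover the degree-zero boundary situation is not a purely numerical one and has to be handled via the explicit requirement that a multiple of $-h|_\sigma(0)$ be principal — this is precisely why clause (2) of Definition~\ref{def:divpoly} is phrased the way it is. By contrast, once the intersection formulae of this section are at hand, computing $D\cdot C$ for the two families of $T$-invariant curves is routine.
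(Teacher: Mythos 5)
Your proposal takes a genuinely different route — intersection theory on $T$-invariant curves for the necessity direction, and a toric-reduction sketch for sufficiency — whereas the paper argues almost entirely through base-point-freeness of multiples of $D_h$ and explicit generating $T$-eigensections. The comparison is instructive: your approach is conceptually clean and would be standard in the toric case, but in the present setting it rests on machinery the paper does not develop, and several of your steps are left as assertions rather than proofs.

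The first concrete gap is in the necessity direction: you appeal to \emph{local} intersection formulae with vertical $T$-invariant curves, claiming these numbers ``record the (lattice) amounts by which $h_P$ bends across the codimension-one cells of $\fan_P$.'' The only intersection formula available in this section is Proposition~\ref{prop:intersection-numbers}, which is a global mixed-volume statement and is not refined enough to isolate the contribution of a single wall-crossing of $h_P$ on a single fibre. To run your argument you would first need to derive those local wall-crossing formulae, which is nontrivial extra work. The paper avoids this entirely: if $h_P$ is not concave, some affine piece $\langle u,\cdot\rangle - a_u$ of $\ell h_P$ has $a_u > (\ell h_P)^*(u)$, so no global $T$-eigensection $f\chi^u$ with $\divisor(f) = a_u P$ can exist, contradicting base-point-freeness of $D_{\ell h}$ directly.

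The second gap is in the sufficiency direction, which you acknowledge is the harder part, but your write-up is a sketch rather than a proof: ``a computation with the above description of $L(nD)$ \dots\ then shows that the $T$-eigensections \dots\ have no common zero.'' The missing step is precisely the content of the paper's argument: identify the vertices $(u,a_u)$ of $\graph_{h_P^*}$ with affine pieces $\langle u,\cdot\rangle - a_u$ of $h_P$; note that on the chart $\tilde{\pdv}(\D)$ one has $D_h|_{\tilde{\pdv}(\D)} = \divisor(f^{-1}\chi^{-u})$ for $f$ with $\divisor(f)=a_uP$ on $\loc\D$; and then use semi-ampleness of $h^*(u)$ (propagated from the vertices of $\Box_h$ by convex interpolation) to choose $f$, after replacing $h$ by a multiple, as a global section generating $\CO(h^*(u))$ on $\loc\D$, so that $f\chi^u$ generates $\CO(D_h)$ on that chart. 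Without this explicit construction of generating sections, base-point-freeness is not established. Finally, for ampleness, ``separation of points and tangent vectors along every orbit'' is again asserted, not proved; the paper instead reduces to the semi-ample case via the equivalence: $h$ is strictly concave iff for every $h'$ there is $k\gg0$ with $h'+kh$ concave, paralleling the characterization that $D$ is ample iff every $D'+kD$ is semi-ample for $k\gg0$. That reduction is cleaner than a direct very-ampleness argument and avoids the issue that positivity of intersection with the finitely many $T$-invariant curves is not a priori sufficient for ampleness in dimension $>2$.
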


\begin{proof}
We first prove that semi-ampleness follows from the above criteria. Because $h$ is (strictly) concave the same is true for $h_0$. This implies that the $u^h(\sigma)$ are exactly the vertices of $\Box_{h}$ and $h^*(u^h(\sigma))=h|_\sigma(0)$.

  The semi-ampleness for $h^*(u),\; u \in \Box_h\cap M$ follows from the semi-ampleness at the vertices. Indeed if $D, D'$ are semi-ample divisors on $Y$ this is also true for $D + \lambda (D'-D)$ with $0\leq \lambda \leq 1$.

  Every vertex $(u,a_u)$ of $\graph_{h^*_P}$ corresponds to an affine piece of $h_P$ of the form $\langle u, \cdot \rangle-a_u$. If we let $f$ be such that $\divisor(f) = a_u P$ on $\loc \D$ for some $\D \in \mathcal{S}$ we then have $D_h|_{\tilde{\pdv}(\D)} = \divisor(f^{-1} \chi^{-u})$ (see \vref{sec:construction-cartier-div}).
  A point $(u,a_u)\in M \times \ZZ$ is a vertex of $h^*$ exactly if $(k u,k a_u)$ is a vertex of $(k \cdot h)^*$. Hence, after passing to a suitable multiple of $h$ we may assume, that $h^*(u)$ is base-point free with
  $f$ being a global section which generates $\CO(h^*(u))$ on $\loc \D$.
 Thus $f \chi^{u}$ is a global section of $\CO(D_h)$ which generates $\CO(D_h)|_{\tilde{\pdv}(\D)}$.

 To show the other direction, i.e. that semi-ampleness implies the above criteria, assume that $h_P$ is not concave. Then this is true also for every multiple of $\ell \cdot h_P$ and hence there is an affine piece $\langle u, \cdot \rangle-a_u$ of $\ell h_P$ such that $a_u > (\ell h_P)^*(u)$. This means there is no global section $f\chi^u$ such that $\divisor(f) = a_u P$. But this contradicts the base-point freeness of $D_{\ell h}$ and hence the semi-ampleness of $D_h$.


To get the statement for ampleness note that a support function $h$ on a polyhedral subdivision is strictly concave if and only if for every support function $h'$ there is a $k \gg 0$ such that $h'+kh$ is concave.
\end{proof}

\begin{cor}
  $\tilde{\pdv}(\fan)$ is projective if and only if all $\fan_P$ are regular subdivisions, i.e. admit a strictly convex support function.
\end{cor}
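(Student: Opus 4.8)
The plan is to reduce the statement to Proposition~\ref{sec:prop-ample} by means of the standard fact that a complete variety is projective exactly when it carries an ample line bundle. Write $X=\tilde{\pdv}(\fan)$, which is complete and normal (Section~\ref{sec:theory-t-varieties}). If $X$ is projective, choose an ample line bundle $L$; since $T$ is connected and linearly reductive, a suitable power of $L$ is $T$-linearizable and very ample, hence has a nonzero $T$-semiinvariant global section, whose divisor is a $T$-invariant ample Cartier divisor. By the correspondence of integral support functions on $\fan$ with invariant Cartier divisors on $X$ (cf.\ \ref{sec:construction-cartier-div}) this divisor is $D_h$ for some $h=\sum_P h_P\,P\in\SF(\fan)$; conversely an ample $D_h$ forces $X$ to be projective. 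Hence $X$ is projective if and only if some $h\in\SF(\fan)$ makes $D_h$ ample, which by Proposition~\ref{sec:prop-ample} means precisely that every $h_P$ is strictly concave and $\deg(-h|_\sigma(0))>0$ for every maximal tail cone $\sigma$. Since $h_P$ is strictly concave if and only if $-h_P$ is a strictly convex support function on $\fan_P$, one implication is then immediate: if such an $h$ exists, every $\fan_P$ is regular.

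For the converse I would assume all $\fan_P$ regular and try to assemble such an $h$. Only finitely many $P$, say $P_1,\dots,P_r$, have $\fan_{P_i}$ different from the tail fan, and the tail fan is itself regular. First I would dispose of the degree condition, which is harmless: given any strictly concave $h=\sum h_P\,P\in\SF(\fan)$, lowering the single constant at a fixed point $Q\in Y$ by an integer $m$ changes only the constant terms $a^h_P(\sigma)$ — so $h$ stays in $\SF(\fan)$ and each slice stays strictly concave — while raising every $\deg(-h|_\sigma(0))$ by $m$; since there are finitely many tail cones, $m\gg 0$ makes all these degrees positive, and Proposition~\ref{sec:prop-ample} then yields an ample $D_h$. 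So everything reduces to producing a single strictly concave element of $\SF(\fan)$.

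That last step is where I expect the real difficulty to lie, and it is the main obstacle. The members of $\SF(\fan)$ must have all slices sharing one linear part $h_0$, equivalently one weight polytope $\Box_{h_0}$ whose normal fan is the tail fan; and a regular slice $\fan_{P_i}$ supports a strictly concave support function with a given such linear part only when $h_0$ obeys certain equalities — along every bounded facet of $\fan_{P_i}$, the two adjoining maximal cells have gradients differing by a vector orthogonal to that facet — so one must arrange these finitely many families of conditions to hold simultaneously. The device I would try is to correct each chosen strictly concave $g^{(i)}$ on $\fan_{P_i}$ (with linear part $\ell_i$) by support functions built from the tail fan, so as to drive all slices to a common weight polytope $\Box_{h_0}$, say via $h_0:=\ell_0+\sum_j\ell_j$ with $\ell_0$ strictly concave on the tail fan, and then set $h_P=h_0$ for the remaining $P$. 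Making this work requires checking carefully that such corrections, restricted to a slice, are still support functions there — a cell of a slice may straddle several cones of the tail fan — and still leave each slice strictly concave. Pinning this down, i.e.\ showing the slices can be realized simultaneously over one and the same $\Box_{h_0}$, is the delicate heart of the argument; the surrounding ingredients (completeness of $X$, the linearization of an ample bundle into a $T$-invariant one, and the constant adjustment above) are routine.
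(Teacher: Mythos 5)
Your forward direction and the ``shift the constants to fix the degree condition'' reduction are both correct and match how the paper intends the corollary to follow from Proposition~\ref{sec:prop-ample}; the paper itself gives no explicit argument, treating it as immediate. Where you part ways with the paper is in flagging --- correctly, in my view --- that the converse is not quite immediate: Proposition~\ref{sec:prop-ample} hands you ampleness of $D_h$ only once you have produced a \emph{single} $h=\sum_P h_P\,P\in\SF(\fan)$ with every $h_P$ strictly concave, and membership in $\SF(\fan)$ forces all the $h_P$ to share one linear part $h_0$. So the content of the converse is precisely the compatibility question you isolate: given that each $\fan_P$ individually admits a strictly concave support function with some linear part $\ell_P$, find an $h_0$ that is simultaneously realizable as the linear part of a strictly concave element of $\SF_{\fan_P}$ for \emph{every} $P$ (and strictly concave on the tail fan, for the trivial slices). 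Reducing the degree condition away, as you do, is genuinely harmless and correct; this compatibility of linear parts is where the substance lies.

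Your proposed device --- correcting each $g^{(i)}$ by support functions coming from the tail fan so as to steer all linear parts to a common $h_0$ --- runs into exactly the obstacle you already name, and that obstacle is real, not an artifact of caution: a support function on the tail fan $\Sigma_0$ need not be affine on the cells of $\fan_P$ (a cell of $\fan_P$ need not lie inside a single cone of $\Sigma_0$), so such a correction generally leaves $\SF_{\fan_P}$. The sum $g^{(i)} + \ell'$ with $\ell'$ piecewise linear on $\Sigma_0$ is concave but piecewise affine only with respect to the common refinement of $\fan_P$ and $\Sigma_0$, i.e.\ for a \emph{refined} fansy divisor, which is not what is needed. Thus what you have is a correct outer frame (projectivity $\Leftrightarrow$ existence of a strictly concave $h\in\SF(\fan)$, via the $T$-linearization of an ample bundle) together with a correctly identified but unclosed gap in the ``regular slices $\Rightarrow$ ample class'' direction. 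To finish you would need an actual lemma of the form: for a regular subdivision $\fan_P$ with tail fan $\Sigma_0$, the set of linear parts of strictly concave elements of $\SF_{\fan_P}$ is all of the ample cone of $\Sigma_0$ (or at least a cone meeting every other such cone nontrivially) --- this is plausible and is what the paper implicitly uses, but it is a statement with content and should be argued (e.g.\ via the Legendre-dual description: a regular subdivision of $\fan_P$ corresponds to a regular subdivision of $\Box_{\ell_P}$, and one must show the combinatorial type persists as $\Box_{\ell_P}$ is deformed through all polytopes with normal fan $\Sigma_0$). As it stands your proposal is honest and structurally sound but does not constitute a complete proof of the converse.
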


\begin{rem}
	We see from proposition \ref{sec:prop-ample} that for $h\in SF(\fan)$, if the  $T$-invariant divisor $D_h$ is semi-ample, the corresponding dual function $h^*$ is in fact a divisorial polytope. Conversely, if $h^*$ is a divisorial polytope, the associated divisor on the associated $T$-variety is semi-ample.
\end{rem}
\subsection{Intersection numbers}
\label{sec:intersection}

\begin{defn}
  For a divisorial polytope $h^*$ we define its {\em volume} to be
  $$\vol h^* := \sum_P \int_{\Box_h} h^*_P \vol_{M_\RR}$$

 For divisorial polytopes $h^*_1,\ldots, h^*_k$ we define their {\em mixed volume} by
  $$V(h^*_1,\ldots, h^*_k):= \sum_{i=1}^k (-1)^{i-1} \sum_{1\leq j_1 \leq \ldots j_i \leq  k} \vol(h^*_{j_1} + \cdots + h^*_{j_i})$$
\end{defn}

\begin{prop}\label{prop:intersection-numbers}
  Assume that on $X$ Kodaira's Vanishing Theorem holds.
  \begin{enumerate}
  \item  If $D_h$ is semi-ample, for the self-intersection number we get \label{item:prop-vol-vol}
    $$(D_h)^{(m+1)} = (m+1)!\vol h^*.$$
  \item  Let $h_1, \ldots, h_{m+1}$ define semi-ample divisors $D_i$ on $X(\fan)$. Then $$(D_1 \cdots D_{m+1}) = (m+1)!V(h^*_1, \ldots, h^*_{m+1}).$$\label{item:prop-vol-mixed}
  \end{enumerate}
\end{prop}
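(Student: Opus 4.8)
The plan is to reduce the self-intersection formula \ref{item:prop-vol-vol} to a computation of the Euler characteristic $\chi(X,\CO(mD_h))$ for large $m$, via the asymptotic Riemann--Roch theorem $(D_h)^{(m+1)} = (m+1)! \cdot \lim_{\ell\to\infty} \chi(X,\CO(\ell D_h))/\ell^{m+1}$. Since Kodaira Vanishing is assumed on $X$ and $D_h$ is semi-ample, for $\ell \gg 0$ the divisor $\ell D_h$ is globally generated and (after perturbing, or directly for semi-ample-plus-big; in the degenerate case both sides vanish) the higher cohomology vanishes, so $\chi(X,\CO(\ell D_h)) = \dim_k \Gamma(X,\CO(\ell D_h))$. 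Now I would invoke the $M$-grading: by Proposition \ref{prop:global_sections}, applied to the support function $\ell h$ whose linear part is $\ell h_0$ and whose dual function is $(\ell h)^* = \ell\,h^*$ (using Remark \ref{rem:h*sum} for the smooth $\fan$ we have arranged), we get
$$\dim_k\Gamma(X,\CO(\ell D_h)) = \sum_{u\in \ell\Box_h\cap M}\dim_k\Gamma(Y,\CO((\ell h)^*(u))) = \sum_{u\in \ell\Box_h\cap M} h^0\!\big(Y,\CO(\ell h^*(u))\big).$$

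The next step is to evaluate each summand by Riemann--Roch on the curve $Y$. For $u$ a lattice point in the interior region where $\ell h^*(u)$ has large degree, $h^0(Y,\CO(\ell h^*(u))) = \deg(\ell h^*(u)) + 1 - g = \ell\deg h^*(u) + O(1)$; boundary contributions and the genus term contribute only $O(\ell^m)$ to the total, hence are negligible after dividing by $\ell^{m+1}$. Therefore
$$\chi(X,\CO(\ell D_h)) = \sum_{u\in \ell\Box_h\cap M}\ell\,\deg h^*(u) + O(\ell^m) = \ell\sum_P\sum_{u\in \ell\Box_h\cap M}\ell\,h^*_P(u) + O(\ell^m),$$
using $\deg h^*(u) = \sum_P h^*_P(u)$. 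The inner double sum is a Riemann sum: $\sum_{u\in\ell\Box_h\cap M} \ell h^*_P(u) = \ell^{m+1}\int_{\Box_h} h^*_P\,\vol_{M_\RR} + O(\ell^m)$, since $h^*_P$ is piecewise affine and hence Riemann-integrable, and scaling $\Box_h$ by $\ell$ introduces the factor $\ell^m$ from the number of lattice points and one more $\ell$ from the values $\ell h^*_P$ evaluated at the rescaled points. Summing over $P$ (a finite sum, as $h^*_P = h^*_0$ off a finite set and we may absorb the linear-part normalization) gives $\chi(X,\CO(\ell D_h)) = \ell^{m+1}\vol h^* + O(\ell^m)$, and dividing by $\ell^{m+1}$ and multiplying by $(m+1)!$ yields part \ref{item:prop-vol-vol}.

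For part \ref{item:prop-vol-mixed}, I would deduce it formally from part \ref{item:prop-vol-vol} by polarization: the intersection number $(D_1\cdots D_{m+1})$ is the unique symmetric multilinear form whose restriction to the diagonal is $(D_h)^{(m+1)}$, and the mixed volume $V(h^*_1,\ldots,h^*_{m+1})$ is defined (up to the combinatorial inclusion--exclusion identity already written in the definition) so that its diagonal value is $\vol(h^*)$; one checks that $\sum_i\lambda_i D_{h_i}$ corresponds to the support function $\sum_i \lambda_i h_i$ with dual function $\sum_i\lambda_i h^*_i$ (again Remark \ref{rem:h*sum}, valid since after refining we are on a smooth $\fan$ where the $h_{i,P}$ are convex), so applying part \ref{item:prop-vol-vol} to each such combination and matching coefficients of the resulting polynomials in the $\lambda_i$ gives the claim. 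The main obstacle I anticipate is the careful bookkeeping in the curve-level Riemann--Roch step: ensuring that the semi-ample (but possibly non-ample, non-big) cases are handled — where one must argue both sides degenerate consistently — and that the error terms really are $O(\ell^m)$ uniformly, including the finitely many points $P$ where $h^*_P\neq h^*_0$ and the genus defect; the rescaling argument for the Riemann sum and the polarization step are routine once the leading-term computation is pinned down.
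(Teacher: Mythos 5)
Your overall strategy — asymptotic Riemann--Roch, the $M$-graded decomposition via Proposition \ref{prop:global_sections} and $(\nu h)^*(u) = \nu h^*(u/\nu)$, the Riemann-sum limit, and then polarization/multilinearity for part (2) — is exactly the paper's route. But there is a genuine gap in the step where you pass from $\chi$ to $h^0$. You assert that ``for $\ell\gg 0$ \ldots the higher cohomology vanishes,'' citing Kodaira vanishing and semi-ampleness. This is false in general: take $X = Y\times\PP^1$ with $g(Y)>0$ and $D_h$ the pullback of a point on $\PP^1$; then $H^1(X,\CO(\ell D_h)) \cong H^0(\PP^1,\CO(\ell))\otimes H^1(Y,\CO_Y)$ is nonzero for all $\ell$. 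Kodaira vanishing applies to $K_X$ twisted by an ample bundle and does not give $H^i(X,\CO(\ell D_h))=0$ for a merely semi-ample $D_h$, and your hedge (``after perturbing, or directly for semi-ample-plus-big; in the degenerate case both sides vanish'') does not resolve this: the semi-ample-and-big case still needs Kawamata--Viehweg rather than Kodaira, and the non-big case needs a separate argument that both sides are zero.

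What you actually need (and what the paper uses, citing Lazarsfeld's asymptotic result for nef divisors) is only that $h^i(X,\CO(\ell D_h)) = O(\ell^{m})$ for $i\ge 1$ when $D_h$ is nef, so that the higher cohomology is negligible after dividing by $\ell^{m+1}$ — not that it vanishes. The hypothesis ``Kodaira's vanishing theorem holds on $X$'' is there precisely to make this asymptotic estimate available (it is delicate in positive characteristic), not to be invoked directly on $\ell D_h$. A secondary, more minor issue: in the curve-level Riemann--Roch step you only argue one inequality (``$= \deg + 1 - g$ in the interior region''); to control the limit you need a two-sided bound on $h^0(Y,\CO(\ell h^*(u)))$ valid for \emph{all} $u\in\ell\Box_h\cap M$, including weights where the degree is small, and the paper supplies this with $\deg\lfloor\cdot\rfloor + 1 - g \le h^0 \le \deg\lfloor\cdot\rfloor + 1$. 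With these two fixes your argument aligns with the paper's proof.
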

\begin{proof}
  If we apply (\ref{item:prop-vol-vol}) to every sum of divisors from $D_1,\ldots,D_{m+1}$ we get (\ref{item:prop-vol-mixed}) by the multi-linearity and symmetry of intersection numbers.
  
  To prove (\ref{item:prop-vol-vol}) we first recall that
$$(D_h)^{m+1} = \lim_{\nu \rightarrow \infty} \frac{(m+1)!}{\nu^{m+1}} \chi(X,\CO(\nu D_h)),$$
but for projective $X:=\pdv(\fan)$ and nef divisors the ranks of higher cohomology groups are asymptotically irrelevant \cite[Thm. 6.7.]{MR1919457} so we get
  $$(D_h)^{m+1} = \lim_{\nu \rightarrow \infty} \frac{(m+1)!}{\nu^{m+1}} h^0(X,\CO(\nu D_h)).$$

Note that $(\nu h)^*(u)=\nu \cdot h^*(\frac{1}{\nu}u)$. Now we can bound $h^0$ by
\begin{equation}
  \label{eq:h-bound}
\sum_{u \in \nu \Box_h \cap M} \left( \deg \lfloor \nu h^*\left({\textstyle \frac{1}{\nu}} u \right)\rfloor - g(Y) +1\right) \leq h^0(\CO(\nu D_h)) \leq \sum_{u \in \nu \Box_h \cap M} \deg \lfloor \nu h^*\left({\textstyle \frac{1}{\nu}}  u\right) \rfloor + 1.
\end{equation}

On the one hand we have
\begin{eqnarray*}
   \lim_{\nu \rightarrow \infty} \frac{(m+1)!}{\nu^{m+1}} \sum_{u \in \nu \Box_h \cap M} \deg \lfloor \nu h^*\left({\textstyle \frac{1}{\nu}} u\right) \rfloor
&=& \lim_{\nu \rightarrow \infty} \frac{(m+1)!}{\nu^{m}} \sum_{u \in \Box_h \cap \frac{1}{\nu}M} \frac{1}{\nu} \deg \lfloor \nu h^*(u)\rfloor \\
&=& (m+1)! \int_{\Box_h} h^* \vol_{M_\RR}.
\end{eqnarray*}
On the other hand, for any constant $c$ we have  
$$\lim_{\nu\rightarrow \infty} \frac{1}{\nu^{m+1}} \sum_{u \in \nu \Box_h \cap M} c = c\cdot \lim_{\nu\rightarrow \infty} \frac{\# (\nu\cdot \Box_h \cap M)}{\nu^{m+1}}=0.$$
Thus, if we pass to the limit in \eqref{eq:h-bound}, the term in the middle has to converge to $\vol h^*$.
\end{proof}

\begin{rem}
  The theorem allows us to compute intersection numbers in characteristic $0$ as well as on $T$-surfaces in positive characteristic because Kodaira's vanishing theorem holds in these cases. We believe that the theorem holds as well for positive characteristic in higher dimensions; work is being done to show that the vanishing theorem holds there.
\end{rem}

\begin{cor}\label{cor:num-equiv}
	Let $h\in SF(\fan)$ and let $C$ be any one-cycle rationally equivalent to the intersection of Cartier divisors, each of which can be expressed as an integer linear combination of semi-ample Cartier divisors. Then $D_{h}\cdot C$ is equal to $D_{h+P-Q}\cdot C$ for all points $P,Q\in Y$.
\end{cor}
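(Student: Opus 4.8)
The plan is to reduce the corollary to the mixed-volume formula of Proposition \ref{prop:intersection-numbers}, exploiting that passing from $h$ to $h+P-Q$ changes the divisorial polytope only by adding a principal divisor to one of the coefficients $h^*_P$, which does not affect volumes. First I would observe that it suffices to prove the statement when $C$ itself is the intersection of $m$ semi-ample Cartier divisors $D_1,\dots,D_m$ (the general case follows by linearity of the intersection pairing over such expressions, since rational equivalence is respected). Second, since both sides of the claimed identity $D_h\cdot C = D_{h+P-Q}\cdot C$ are linear in $h$, and since $\tcadiv(\fan)$ is (at least after passing to a suitable refinement, cf.\ the smoothing result of \cite{suess08}) spanned by semi-ample support functions — a fact one can extract from Proposition \ref{sec:prop-ample}, writing any $h$ as a difference of two strictly concave functions — we may assume $D_h$ is itself semi-ample. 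Then $D_h\cdot C = (m+1)!\,V(h^*,D_1^*,\dots,D_m^*)$ by Proposition \ref{prop:intersection-numbers}(\ref{item:prop-vol-mixed}), where $D_i^*$ denotes the divisorial polytope of $D_i$.

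The heart of the argument is then the claim that $(h+P-Q)^* = h^*$ as functions up to an additive term that integrates to zero against every $h^*_{P'}$ in the volume/mixed-volume formula. Concretely, adding $P-Q$ to $h$ means replacing $h_P$ by $h_P - 1$ and $h_Q$ by $h_Q + 1$ (and leaving all other slices fixed), hence by the definition of the dual function $h^*_P(u) = \minvert(u-h_P)$ we get $(h+P-Q)^*_P = h^*_P + 1$ and $(h+P-Q)^*_Q = h^*_Q - 1$, with $\Box_{h+P-Q} = \Box_h$ since the linear part $h_0$ is unchanged. Therefore
$$\vol(h+P-Q)^* = \vol h^* + \int_{\Box_h} 1\;\vol_{M_\RR} - \int_{\Box_h} 1\;\vol_{M_\RR} = \vol h^*,$$
and the same cancellation occurs termwise in each summand $\vol(h^*_{j_1}+\cdots+h^*_{j_i})$ of the mixed volume (the $+1$ on slice $P$ and $-1$ on slice $Q$ survive the Minkowski-sum operation on the $h^*$'s, since adding a constant to one summand of a sup-convolution adds that constant to the result), so $V((h+P-Q)^*,D_1^*,\dots,D_m^*) = V(h^*,D_1^*,\dots,D_m^*)$. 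Combining with Proposition \ref{prop:intersection-numbers} gives $D_h\cdot C = D_{h+P-Q}\cdot C$, and unwinding the reduction steps yields the corollary in general.

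The main obstacle I anticipate is the reduction in the first paragraph: I need $h - (h+P-Q) = Q-P$ (a principal-type perturbation on $Y$) to interact correctly with the decomposition of an arbitrary $h$ into semi-ample pieces, since Proposition \ref{prop:intersection-numbers} only computes intersection numbers for semi-ample divisors and only after one knows Kodaira vanishing holds on $X$ (which, per the remark following that proposition, is guaranteed in characteristic zero and for $T$-surfaces). One clean way around this is to note that $D_h - D_{h+P-Q}$ is the pullback to $X$ of the degree-zero divisor $P-Q$ on $Y$ under the quotient-type rational map, and to argue directly that such a pullback meets $C$ in degree zero; but absent that, writing $h = h_1 - h_2$ with both $h_i$ semi-ample and applying the mixed-volume computation to each of $h_1, h_1+P-Q, h_2, h_2+P-Q$ separately suffices, provided the ambient $X$ (or a projective refinement on which all the relevant divisors pull back) satisfies the hypotheses of Proposition \ref{prop:intersection-numbers}. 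I would state the corollary's proof under exactly those standing hypotheses.
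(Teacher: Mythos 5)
Your proposal is correct in substance and rests on the same engine as the paper's proof — Proposition~\ref{prop:intersection-numbers} plus the observation that the dual function of $h+P-Q$ differs from that of $h$ only by $\pm 1$ on the slices at $P$ and $Q$, which cancels under the integral defining $\vol$. However, your reduction is more roundabout than the paper's. The paper simply uses linearity of the intersection pairing to write $D_{h+P-Q}\cdot C = D_h\cdot C - D_{-P}\cdot C + D_{-Q}\cdot C$ and reduces to showing $D_{-P}\cdot C = D_{-Q}\cdot C$; the key is that $D_{-P}$ and $D_{-Q}$ are \emph{automatically} semi-ample (all slices of $-P$ are constant, hence concave, and $\deg(-(-P)|_\sigma(0)) = \deg P = 1 > 0$, so Proposition~\ref{sec:prop-ample} applies), so one can apply Proposition~\ref{prop:intersection-numbers} directly to the difference and use $\vol((-P)^*+\widetilde{h}^*) = \vol((-Q)^*+\widetilde{h}^*)$. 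Your route instead tries to make $D_h$ itself semi-ample via a decomposition $h = h_1 - h_2$; this works but requires the existence of an ample support function (hence regularity of all $\fan_P$, i.e.\ projectivity of $X$), which is not delivered by passing to a smooth refinement alone — it is an additional hypothesis, albeit one that is implicitly needed anyway since Proposition~\ref{prop:intersection-numbers} is proved for projective $X$. Isolating the difference $D_{-Q}-D_{-P}$ rather than modifying $D_h$ sidesteps all of this. One small slip: with $h+P$ meaning $h_P\mapsto h_P+1$, the dual becomes $(h+P)^*_P = \minvert(u-h_P-1) = h^*_P - 1$, not $+1$ as you wrote (and dually at $Q$); the signs are reversed but the cancellation is unaffected. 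Your parenthetical suggestion that $D_h - D_{h+P-Q}$ be treated as the pullback of the degree-zero divisor $Q-P$ is in fact the paper's idea, just phrased geometrically; developing it combinatorially through the divisors $D_{-P}$ and $D_{-Q}$ is exactly how the paper makes it precise.
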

\begin{proof}
	We have $$D_{h+P-Q}\cdot C=(D_h-D_{-P}+D_{-Q})\cdot C=D_h\cdot C-D_{-P}\cdot C+D_{-Q}\cdot C$$
	so it is sufficient to show that $D_{-P}\cdot C= D_{-Q} \cdot C$. Now, $D_{-P}$ and $D_{-Q}$ are semi-ample, so we can apply proposition \ref{prop:intersection-numbers}. Using the fact that $\vol( (-P)^*+\widetilde{h}^*)=\vol( (-Q)^*+\widetilde{h}^*)$ for all $\widetilde{h}\in SF(\fan)$ gives the desired equality.
\end{proof}

\begin{exmp}
 We know by proposition~\ref{sec:prop-ample} that $D_h$ in our threefold is ample. We have $\vol h^*= 21$. Hence, $X$ is Fano of degree $21$.
\end{exmp}

\subsection{Genus of Curves on Surfaces}
Let $X=\tilde{X}(\fan)$ be a two-dimensional $T$-variety and let $h\in SF(\fan)$ be a support function on $\fan$. For any curve $C\in |D_h|$, we show how to calculate the arithmetic genus $g(C)$. As a corollary, we can calculate the Euler characteristic $\chi(X,\CO(D_h))$ if $X$ is smooth.

\begin{defn}
For any $h\in SF(\fan)$, let $$\inn h_{P}^*:=\sum_{u\in \Box_h^\circ\cap M}
\#\{a\in \mathbb{Z}_{\geq 0} \mid a < |h_P^*(u)|\}\cdot \frac{h_P^*(u)}{|h_P^*(u)|}$$
for each point $P\in Y$, where $\Box_h^\circ$ is the interior of $\Box_h$. Furthermore, let $$\inn h^*:=\sum_{P \in Y} \inn h_P^*.$$
\end{defn}

\begin{defn}
For any $h\in SF(\fan)$, let
\begin{equation*}
\#h_P^*:=\sum_{u\in \Box_h\cap M} \lfloor h_P^*(u) \rfloor
\end{equation*}
for any point $P\in Y$ and let
\begin{equation*}
\#h^* := \sum_{u \in \Box_h\cap M} \deg \lfloor h^*(u) \rfloor=\sum_{Y\in P} \#h_P^*.
\end{equation*}
\end{defn}

\begin{rem}
    Note that $\inn h_P^*$ is the number of ``interior'' lattice points between the graph of $h_P^*$ and $0$ counted with their signs, where lattice points in height $0$ are counted as long as they aren't on the boundary of $\Box_h$. Similarly, if $\#h_P^*(h)\geq 0$ for all $u\in\Box_h$, $\#h_P^*$  is the sum of the number of lattice points between the graph of $\#h_P^*$ and $0$, where we count no lattice points in height $0$ but all lattice points lying on the graph of $h_P^*$.
\end{rem}

We will use the following lemma:

\begin{lem}\label{lemma:genpick}
With notation as above, $2\cdot\vol h_P^*=\inn h_P^*+ \# h_P^*$ for all $P\in Y$. It follows in particular that $2\cdot\vol h^*=\inn h^*+ \# h^*$.
\end{lem}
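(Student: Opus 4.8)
The plan is to reduce the claim to a Pick-type identity for a single piecewise-affine concave function over a lattice polytope, applied separately at each point $P\in Y$; the global statement then follows by summing over the finitely many $P$ with $h_P^*\neq h_0^*$. Fix $P$ and write $g:=h_P^*:\Box_h\to\QQ$, a concave piecewise-affine function whose graph $\graph_g$ has all vertices in $M\times\ZZ$ (this is condition \ref{item:integral} in the definition of a divisorial polytope, or more precisely follows from the integrality hypotheses that make $\fan_P$ and $h$ defined over the lattice). I would first treat the region where $g\geq 0$ and the region where $g\leq 0$ separately, since $\inn h_P^*$ and $\#h_P^*$ are defined with signs tracking $g/|g|$; by concavity the set $\{g\geq 0\}$ is itself a (lattice, by the vertex hypothesis) subpolytope, so it suffices to prove the identity $2\vol g = \inn g + \#g$ for a nonnegative $g$ on a lattice polytope and then add the two contributions (with the negative part contributing with a global sign, exactly as in the definitions).

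For nonnegative $g$, the key observation is that $2\vol g=2\int_{\Box_h}g\,\vol_{M_\RR}$ is twice the Euclidean volume of the region $R$ lying between the graph of $g$ and the zero section, sitting inside $M_\RR\times\RR\cong\RR^{m+1}$ — but here $m=1$ since $X$ is a surface, so $R$ is a planar lattice polygon (its vertices are lattice points: the vertices of $\graph_g$ are in $M\times\ZZ$ by hypothesis, and the vertices on the zero section are the endpoints of $\Box_h$, which are lattice points). So the natural tool is Pick's theorem: $\vol R = i(R) + b(R)/2 - 1$, where $i$ and $b$ count interior and boundary lattice points of $R$. I would then carefully match the two sides: the interior lattice points of $R$, together with a bookkeeping of the boundary lattice points of $R$ lying on the zero section versus those on the graph of $g$ or on the two vertical sides over $\partial\Box_h$, should reorganize exactly into $\inn g$ (interior points counted, including height-$0$ points not on $\partial\Box_h$) plus $\#g=\sum_{u\in\Box_h\cap M}\lfloor g(u)\rfloor$ (which counts, for each lattice $u$, the lattice points strictly above $0$ up to the graph, i.e. the full vertical columns over $\Box_h\cap M$, giving a natural "$i+$ half-boundary" split). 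The two "$-1$"'s and the boundary-halving in Pick's theorem are exactly absorbed by the difference between "lattice points on $\partial\Box_h$ counted or not" in the two definitions.

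The remaining step is to handle the general (sign-changing) $g$ by writing $g=g^+-g^-$ with $g^+=\max(g,0)$, $g^-=\max(-g,0)$, noting that both have lattice-point graphs and lattice-polygon domains (again using concavity so the breakpoint locus $\{g=0\}$ consists of lattice points), and checking that $\vol g=\vol g^+-\vol g^-$, $\inn g=\inn g^+-\inn g^-$, $\#g=\#g^+-\#g^-$ — the last two being immediate from the signed definitions. Then $2\vol g=2\vol g^+-2\vol g^-=(\inn g^++\#g^+)-(\inn g^-+\#g^-)=\inn g+\#g$. Finally I sum over $P\in Y$; since all but finitely many terms vanish and $\vol h^*$, $\inn h^*$, $\#h^*$ are defined as the corresponding sums over $P$, we get $2\vol h^*=\inn h^*+\#h^*$. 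The main obstacle I anticipate is the bookkeeping in the nonnegative case: getting the count of boundary lattice points of $R$ on the zero-section edge and on the two vertical edges to line up precisely with the asymmetric "height $0$ counted iff not on $\partial\Box_h$" conventions in the definitions of $\inn{}$ and $\#{}$, so that Pick's "$+b/2-1$" and the analogous term for $\#g$ cancel exactly; this is elementary but requires care, and is cleanest done by first reducing to $g$ with a single affine piece (a trapezoid $R$) and then using additivity of all three quantities under subdividing $\Box_h$ at the breakpoints of $g$.
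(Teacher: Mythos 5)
Your reduction of the general (sign-changing) case to the nonnegative case has a genuine gap. You decompose $g := h_P^*$ as $g^+ - g^-$ with $g^{\pm}=\max(\pm g,0)$ and assert that ``by concavity the set $\{g\geq 0\}$ is itself a (lattice, by the vertex hypothesis) subpolytope'' and later that ``the breakpoint locus $\{g=0\}$ consists of lattice points.'' This is false. The integrality hypothesis only says that $\conv(\graph_{h_P^*})$ has its vertices in $M\times\ZZ$; a zero crossing of $g$ on the interior of an edge of the graph need not occur at a point of $M$. For example $g(u)=\tfrac{3}{2}u-2$ on $\Box_h=[0,2]$ has graph vertices $(0,-2)$ and $(2,1)\in M\times\ZZ$, but $g$ vanishes at $u=\tfrac{4}{3}$. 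Consequently $g^+$ acquires the non-lattice breakpoint $(\tfrac{4}{3},0)$, the region $R^+$ between $\graph_{g^+}$ and the zero section is not a lattice polygon, and Pick's theorem (the engine of your nonnegative case) cannot be applied to it. (In addition, $g^+$ is typically not concave, since $\max$ of two concave functions need not be concave, although Pick's theorem alone does not require convexity.) The paper avoids this problem altogether: instead of splitting off positive and negative parts, it translates by a single integer constant, setting $\widetilde{h}_P^*:=h_P^*+j\geq 0$, which preserves both the lattice-point property of the graph and concavity, applies the Pick argument to $\widetilde{h}_P^*$, and recovers $h_P^*$ by additivity of $\vol$, $\inn$, $\#$ under subtraction of the integer-valued constant $j_P^*\equiv j$. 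If you want to repair your argument, replace the $g^{\pm}$ decomposition by this integer shift; your treatment of the nonnegative case (Pick's theorem on $\Delta=\conv(\{(u,h_P^*(u))\}\cup\{(u,0)\})$ with the boundary/interior bookkeeping you sketch) agrees with the paper's and is sound.
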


\begin{proof}
Fix some $P\in Y$. Suppose now that $h_P^*(u)\geq 0$ for all $u\in\Box_h$ and set 
\begin{equation*}
\Delta=\conv\left\{\left\{(u,h_P^*(u))\right\}\cup\left\{(u,0) \right\}\right\},
\end{equation*}
where $u\in \Box_h$. This is a convex polytope in $M_\mathbb{Q}'$, where $M'=M\times \mathbb{Z}$. Pick's theorem tells us that $2 \cdot \vol\Delta+2=\#(\Delta\cap M') + \# (\Delta^\circ\cap M')$. Now $\vol\Delta=\vol h_P^*$, $\#(\Delta\cap M)=\#h_P^*+\#(\Box_h\cap M)$, and $\#(\Delta^\circ\cap M)=\inn h_P^*-\#(\Box_h\cap M)+2$, so the desired equality follows. For general $h_P^*$, choose $j$ such that $\widetilde{h}_P^*(u):=h_P^*(u)+j\geq 0$ for all $u\in \Box_h$. Then $2\cdot\vol \widetilde{h}_P^*=\inn \widetilde{h}_P^*+ \# \widetilde{h}_P^*$ and for $j_P^*(u):=j$ we have $2\cdot\vol j_P^*=\inn j_P^*+ \# j_P^*$. Since $\vol$, $\inn$, and $\#$ are additive at least for integer-valued functions, the desired equality follows for $h_P^*=\widetilde{h}_P^*-j_P^*$. 
\end{proof}

We are now able to prove the following proposition:

\begin{prop}\label{prop:genusformula}
Let $h\in SF(\fan)$ be any support function such that $D_h$ is semi-ample. Then for $C\in |D_h|$, the arithmetic genus of $C$ is given by $$g(C)=\inn h^* + 1 + \vol \Box_h \cdot (g(Y)-1),$$ where $g(Y)$ is the genus of $Y$. 
\end{prop}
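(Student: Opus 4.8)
The plan is to obtain $g(C)=p_a(C)$ from the adjunction formula on the surface, turning the problem into a Riemann--Roch computation on the base curve $Y$. Since the asserted right--hand side depends only on $h^*$ and on $Y$, I would first reduce to the case that $X=\tilde{\pdv}(\fan)$ is smooth, replacing $\fan$ by a smooth refinement as in \cite{suess08}: $h$, $h^*$ and semi-ampleness of $D_h$ are unaffected, and $p_a$ of a member of $|D_h|$ is stable under such a proper birational modification. Observe also that, $D_h$ being semi-ample, the remark following Proposition~\ref{sec:prop-ample} shows $h^*$ is a divisorial polytope, so $\Box_h\subset M_\QQ\cong\QQ$ is a bounded lattice interval and thus contains exactly $\vol\Box_h+1$ lattice points.

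On the smooth projective surface $X$, for $C\in|D_h|$ the adjunction formula reads $2p_a(C)-2=D_h^2+D_h\cdot K_X$, while Riemann--Roch on $X$ gives $\chi(X,\CO_X(D_h))=\chi(X,\CO_X)+\tfrac12 D_h\cdot(D_h-K_X)$. Eliminating $D_h\cdot K_X$ between the two produces the (characteristic-free) identity
\begin{equation*}
p_a(C)=D_h^2-\chi\bigl(X,\CO_X(D_h)\bigr)+\chi(X,\CO_X)+1 .
\end{equation*}
Now $D_h^2=2\cdot\vol h^*$ by Proposition~\ref{prop:intersection-numbers} with $m=1$ (Kodaira vanishing holds on $T$-surfaces), and $2\cdot\vol h^*=\inn h^*+\# h^*$ by Lemma~\ref{lemma:genpick}. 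Hence the whole proposition collapses to the single combinatorial identity
\begin{equation*}
\chi\bigl(X,\CO_X(D_h)\bigr)=\# h^*+(1-g(Y))\,(\vol\Box_h+1),
\end{equation*}
whose specialisation to $h=0$ moreover yields $\chi(X,\CO_X)=1-g(Y)$; feeding these into the identity above and using $\vol\Box_h+1=\#(\Box_h\cap M)$ gives $p_a(C)=\inn h^*+1+\vol\Box_h\cdot(g(Y)-1)$ after a short cancellation.

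To prove the Euler-characteristic identity I would push forward along the quotient morphism $\pi\colon X\to Y$ glued from the affine quotients $\tilde{\pdv}(\D)\to\loc\D$. As $\pi$ is $T$-invariant, $\pi_*\CO_X(D_h)$ is $M$-graded, and sheafifying Proposition~\ref{prop:global_sections} over $Y$ identifies its degree-$u$ part with $\CO_Y(\lfloor h^*(u)\rfloor)$ for $u\in\Box_h\cap M$ and with $0$ otherwise, so
\begin{equation*}
\pi_*\CO_X(D_h)=\bigoplus_{u\in\Box_h\cap M}\CO_Y\bigl(\lfloor h^*(u)\rfloor\bigr),
\end{equation*}
a finite sum. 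The fibres of $\pi$ are complete curves of arithmetic genus zero which are chains of $\PP^1$'s (each slice $\fan_P$ being a subdivision of the line $N_\QQ\cong\QQ$), and $D_h$, being semi-ample and hence nef, has non-negative degree on every component of every fibre; therefore $H^1$ of each fibre with coefficients in $\CO_X(D_h)$ vanishes, so $R^1\pi_*\CO_X(D_h)=0$ (and $R^i\pi_*=0$ for $i\ge 2$ by the dimension of the fibres). The Leray spectral sequence then degenerates and, using Riemann--Roch on $Y$,
\begin{equation*}
\chi\bigl(X,\CO_X(D_h)\bigr)=\sum_{u\in\Box_h\cap M}\chi\bigl(Y,\CO_Y(\lfloor h^*(u)\rfloor)\bigr)=\sum_{u\in\Box_h\cap M}\bigl(\deg\lfloor h^*(u)\rfloor+1-g(Y)\bigr);
\end{equation*}
recognising $\sum_u\deg\lfloor h^*(u)\rfloor=\# h^*$ and $\#(\Box_h\cap M)=\vol\Box_h+1$ finishes it.

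The step I expect to be the main obstacle is the vanishing $R^1\pi_*\CO_X(D_h)=0$: one must handle the degenerate fibres of $\pi$, which can be non-reduced, and check that a relatively nef line bundle has trivial first cohomology along them. When all vertices appearing in $\fan$ are lattice points the fibres are reduced chains of $\PP^1$'s and this is immediate; in general one filters $\CO_F$ along the reduced components of a fibre $F$ and checks that every graded piece is a line bundle of degree $\ge -1$ on a $\PP^1$ or on a shorter chain, so that $H^1(F,\CO_X(D_h)|_F)=0$ follows by a d\'evissage argument. The preliminary reduction to smooth $X$ also deserves care, since a general $T$-surface need not be $\QQ$-Gorenstein; here one exploits that every quantity entering the statement is stable under the refinement morphism together with the projection formula. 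The identification of $\pi_*\CO_X(D_h)$ and all remaining manipulations are routine once Propositions~\ref{prop:global_sections} and~\ref{sec:prop-ample} and Lemma~\ref{lemma:genpick} are available.
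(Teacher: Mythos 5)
Your route is genuinely different from the paper's and, if the one missing lemma is supplied, it is clean and perhaps conceptually preferable. The paper proves the genus formula first, by an explicit geometric comparison: after passing to a smooth refinement, they connect $X$ to $X_0=Y\times\PP^1$ by a chain of $T$-equivariant monoidal transformations, track how the arithmetic genus changes under each blowup via the elementary formula $g(C_i)=g(C_{i-1})-\tfrac12 r_i(r_i+1)$, match this against a purely combinatorial difference $\inn h^* - \inn\widetilde h^*$ (checked first in a toric special case, then argued to persist in general because the $r_i$ are read off locally from the slices $\fan_P$), and finally compute $g(C_0)$ directly on $X_0$ via adjunction and Riemann--Roch. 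They then derive the Euler-characteristic formula (Corollary \ref{cor:euler}) as a consequence. You reverse this logical order: you want the Euler-characteristic identity $\chi(X,\CO_X(D_h))=\#h^*+(1-g(Y))\,\#(\Box_h\cap M)$ to come first, via the pushforward $\pi_*\CO_X(D_h)=\bigoplus_{u}\CO_Y(\lfloor h^*(u)\rfloor)$ and the vanishing $R^1\pi_*\CO_X(D_h)=0$, and then you recover the genus formula from adjunction plus Riemann--Roch on $X$ combined with $D_h^2=2\vol h^*$ and Lemma \ref{lemma:genpick}. The algebra in your final cancellation checks out, and the route buys you the Euler-characteristic formula (Corollary \ref{cor:euler}) directly rather than deriving it afterwards.

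The genuine gap is exactly the one you flag: the vanishing $R^1\pi_*\CO_X(D_h)=0$. Your sketch — filter $\CO_F$ along the reduced components and argue every graded piece has $H^1=0$ — is the right idea but is not a proof as written. To carry it out you must exhibit, for each nonreduced fibre $F=\sum n_i D_i$, an admissible chain $0=F_0\subset F_1\subset\cdots\subset F_k=F$ with $F_{j+1}/F_j\cong\CO_{D_{i(j)}}(-F_j)$, and then show $\deg\bigl(\CO(D_h)|_{D_{i(j)}}\otimes\CO_{D_{i(j)}}(-F_j)\bigr)\ge -1$ for every step. The nefness of $D_h$ gives $D_h\cdot D_i\ge 0$, but the twist $-F_j\cdot D_i$ can a priori be negative; one has to invoke Zariski's lemma on the negative semi-definiteness of the intersection form of fibre components (using $D_i^2<0$ for every component of a reducible fibre, and $F\cdot D_i=0$) and choose the chain carefully so the degrees stay $\ge -1$. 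This is a real piece of work — comparable in length to the blowup bookkeeping the paper does — and without it the proof is incomplete. I would also be a little more careful about the claim that sheafifying Proposition \ref{prop:global_sections} gives the graded pieces of $\pi_*\CO_X(D_h)$ as $\CO_Y(\lfloor h^*(u)\rfloor)$: that proposition is stated for global sections, and upgrading it to an identification of direct image sheaves requires checking it on the affine pieces $\tilde{\pdv}(\D)\to\loc\D$ of the covering, which is true but should be said. Finally, your preliminary reduction to smooth $X$ relies on $R\varphi_*\CO_{X'}=\CO_X$ for the refinement morphism $\varphi$; this holds because $T$-surfaces have rational singularities, but that fact is being used silently (the paper's proof does the same, so this is a shared elision rather than an error).
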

\begin{proof}
Without loss of generality, we can take the curve $C$ to equal $D_h$. Indeed, arithmetic genus is invariant under rational equivalence and since $|D_h|$ isn't empty, it must contain some $T$-invariant effective divisor. We compare the genus of $C$ with that of a comparable curve $C_0$ on $X_0:=Y\times \mathbb{P}^1$ and then compute the genus of $C_0$ directly. To begin with, note that we can find monoidal transformations $\pi_i:X_i\to X_{i-1}$ $1\leq i\leq k$ such that; 
\begin{enumerate}
\item $X_i$ is a $T$-variety ;
\item $\pi_i$ is $T$-equivariant;
\item There is a birational $T$-equivariant morphism $\varphi:X_k\to X$.
\end{enumerate}
This is done as follows: Let $\Sigma$ be the fan $\{\mathbb{Q}_{\geq 0},\mathbb{Q}_{\leq 0},\{0\}\}$\dots and let $\fan_P^0:=\Sigma$ for all points $P\in Y$. Then $X_0=\tilde{X}(\fan^0)$. Each morphism $\pi_i$ corresponds to an additional subdivision in the fan $\fan^{i-1}$ at exactly one point. Thus, we keep on refining until we get a $\fan^k$ which is a smooth common refinement of $\fan$ and $\fan^0$; this gives us our morphism $\varphi$. Finally, we let $\pi:X_k\to X_0$ be the composition of the $\pi_i$'s.

We now pull back $C$ to $C_k:=\varphi^*(C)$. Thus we now have $C_k=D_h$, where $h$ is now considered as a support function on $\fan^k$. Furthermore, this doesn't change the arithmetic genus, that is, $g(C)=g(C_k)$. Define now inductively $C_{i-1}={\pi_i}_*(C_i)$ for $1\leq i \leq k$. One easily checks that $C_0=D_{\widetilde{h}}$, where $\widetilde{h}\in SF(\fan^0)$ is the support function given by the divisorial polytope $\widetilde{h}_P^*:=\max_{u\in\Box_h} h_P^*(h)$ with $\Box_{\widetilde{h}}:=\Box_h$. Note that since $C$ is semi-ample, each $C_i$ is semi-ample as well. We will now calculate the difference between $g(C_k)$ and $g(C_0)$.

We first consider a special case, namely, suppose that $h_P^*$ is trivial everywhere except for at two points $Q_1\neq Q_2$. If $Y=\mathbb{P}^1$, all the varieties $X_i$ and $X$ are toric. In this case, the divisor $D_h$ can be understood in toric terms as the polytope 
\begin{equation*}
\Delta_h:=\conv \Gamma_{h_{Q_1}^*}\cup\Gamma_{-h_{Q_2}^*}
\end{equation*}
and $D_{\widetilde{h}}$ corresponds to $\Delta_{\widetilde{h}}$, which is defined in a similar manner. Then $$g(C_k)-g(C_0)=I(\Delta_h)-I(\Delta_{\widetilde{h}}),$$ where $I(\Delta)$ is the number of interior lattice points of $\Delta$, see for example \cite{MR2272243}, prop. 5.1. But we have $I(\Delta_h)=\inn h_{Q_1}^*+\inn h_{Q_2}^*-\#(\Box_h^\circ\cap M)$ and a similar equation for $\widetilde{h}$, which leads to
\begin{equation}
g(C_k)-g(C_0)=\inn h^*-\inn \widetilde{h}^*.\label{eqn:genusdiff}
\end{equation}

Now, equation \eqref{eqn:genusdiff} actually holds in general, not just in the toric case. To see this, note that for each $1\leq i\leq k$, $C_i=\pi_i^*(C_{i-1})+r_i\cdot E_i$, where $E_i$ is the exceptional divisor of $\pi_i$. Then similar to \cite{MR0463157}, V.3.7 we have $g(C_i)=g(C_{i-1})-\frac{1}{2}r_i(r_i+1)$. Thus, $$g(C_k)-g(C_0)=\sum_{i=1}^k -\frac{1}{2}r_i(r_i+1).$$ However, for each $1\leq i \leq k$, the integer $r_i$ can be determined combinatorially by comparing the polyhedral subdivisions $\fan_P^i$ and $\fan_P^{i-1}$ for the single point $P\in Y$ where these fansy divisors differ. Thus, the integers $r_i$ can be calculated exactly as if we were in the toric case, so we get $$\sum_{i=1}^k -\frac{1}{2}r_i(r_i+1)=\inn h^*-\inn \widetilde{h}^*.$$ Equation \eqref{eqn:genusdiff} follows.

We now calculate $g(C_0)$. From the adjunction formula, we have
\begin{equation*}
g(C_0)=\frac{D_{\widetilde{h}}^2+D_{\widetilde{h}}\cdot K_0}{2}+1
\end{equation*}
 for $K_0$ a canonical divisor on $X_0$, see \cite{MR0463157}, V.1.5. The theorem of Riemann-Roch for surfaces (\cite{MR0463157}, V.1.6) gives us
\begin{equation*}
\chi(X_0,\CO (D_{\widetilde{h}}))=\frac{D_{\widetilde{h}}^2-D_{\widetilde{h}}\cdot K_0}{2}+\chi(X_0,\CO_{X_0}).
\end{equation*}
Thus, 
\begin{equation*}
g(C_0)=D_{\widetilde{h}}^2+1+\chi(X_0,\CO_{X_0})-\chi(X_0,\CO (D_{\widetilde{h}})).
\end{equation*} 
Now, $\chi(X_0,\CO_{X_0})=1-g(Y)$ (see \cite{MR0463157}, V.2.5). Likewise, if $p:X_0\to Y$ is the projection, we have
\begin{align*}
\chi\left(X_0,\CO (D_{\widetilde{h}})\right)&=\chi\left(Y,p_*\CO (D_{\widetilde{h}})\right)\\
				&=\sum_{u\in\Box_{{h}}\cap M} \chi(Y,\CO(\widetilde{h}^*(u)))\\
&=\#\widetilde{h}+(1-g)\cdot(\vol\Box_{{h}}+1),
\end{align*}
where the last equation follows from Riemann-Roch for curves. We also have that $D_{\widetilde{h}}^2=2\cdot\vol \widetilde{h}$. Making these substitutions results in
\begin{align*}
g(C_0)&=2\cdot \vol \widetilde{h}+1+\vol\Box_{{h}}\cdot(g(Y)-1)-\#\widetilde{h}\\
&=\inn \widetilde{h}+1+\vol\Box_{{h}}\cdot(g(Y)-1),
\end{align*}
the second equality coming from lemma \ref{lemma:genpick}. Combining this with equation \eqref{eqn:genusdiff} completes the proof.
\end{proof}

\begin{cor}\label{cor:euler}
For any semi-ample $T$-invariant Cartier divisor $D_h$ on a smooth $T$-variety $X$, we have 
$$\chi(X,\CO(D_h))=\#h^*-(g(Y)-1)\cdot \#(\Box_h \cap M)=\sum_{u\in \Box_h\cap M}\chi(Y,\CO(h^*(u))).$$ 
\end{cor}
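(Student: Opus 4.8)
The plan is to obtain the first equality by feeding the genus formula of Proposition~\ref{prop:genusformula} into Riemann--Roch for surfaces, and to obtain the second equality by applying Riemann--Roch on the curve $Y$ to each $M$-homogeneous piece. Throughout, $X$ is a smooth complete (hence projective) $T$-surface, so the acting torus has dimension $m=1$ and $Y$ is a smooth projective curve.

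First I would write down Riemann--Roch for surfaces (\cite{MR0463157}, V.1.6): for a canonical divisor $K_X$ of $X$,
$$\chi(X,\CO(D_h)) = \chi(X,\CO_X) + \frac{1}{2}\bigl(D_h^2 - D_h\cdot K_X\bigr).$$
Proposition~\ref{prop:genusformula} (more precisely, the divisor-class computation carried out in its proof, which replaces $C$ by $D_h$ up to rational equivalence and never really uses more than the self-intersection classes) yields $\frac{1}{2}(D_h^2+D_h\cdot K_X)+1 = \inn h^* + 1 + \vol\Box_h\cdot(g(Y)-1)$, i.e. $D_h\cdot K_X = 2\inn h^* + 2\,\vol\Box_h\cdot(g(Y)-1) - D_h^2$. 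Substituting this into Riemann--Roch, and then using $D_h^2 = 2\,\vol h^*$ from Proposition~\ref{prop:intersection-numbers}(\ref{item:prop-vol-vol}) (valid since Kodaira vanishing holds on surfaces) together with $\chi(X,\CO_X)=1-g(Y)$, collapses the right-hand side to $(1-g(Y)) + 2\,\vol h^* - \inn h^* - \vol\Box_h\cdot(g(Y)-1)$. Now Lemma~\ref{lemma:genpick} turns $2\,\vol h^* - \inn h^*$ into $\#h^*$, and since $\Box_h\subset M_\QQ\cong\QQ$ is a lattice interval we have $\#(\Box_h\cap M)=\vol\Box_h + 1$; collecting terms gives exactly $\chi(X,\CO(D_h)) = \#h^* - (g(Y)-1)\cdot\#(\Box_h\cap M)$.

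The auxiliary identity $\chi(X,\CO_X)=1-g(Y)$ can be taken from the proof of Proposition~\ref{prop:genusformula} (where it is used for $X_0=Y\times\PP^1$) via the birational invariance of $\chi(\CO)$ for smooth projective surfaces, or proved directly from $p_*\CO_X=\CO_Y$ and $R^1p_*\CO_X=0$ for the quotient map $p\colon X\to Y$ (whose scheme-theoretic fibres have arithmetic genus $0$). For the final equality I would apply Riemann--Roch on $Y$ to each summand, $\chi(Y,\CO(h^*(u)))=\deg\lfloor h^*(u)\rfloor + 1 - g(Y)$, and sum over $u\in\Box_h\cap M$; by the definition of $\#h^*$ this gives $\#h^*+(1-g(Y))\cdot\#(\Box_h\cap M)$, which matches the middle expression. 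The only genuinely delicate points are making sure Proposition~\ref{prop:genusformula} is invoked for the (possibly virtual) arithmetic genus $\frac{1}{2}(D_h^2+D_h\cdot K_X)+1$ of $D_h$ rather than for an actual effective member of $|D_h|$, and pinning down $\chi(X,\CO_X)$; both are mild, and everything else is assembling formulae already established in this section.
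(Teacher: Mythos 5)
Your argument matches the paper's proof in all essentials: both combine the Riemann--Roch theorem for surfaces with the adjunction formula, Proposition~\ref{prop:genusformula} for the genus, Lemma~\ref{lemma:genpick}, the identities $D_h^2 = 2\vol h^*$ and $\chi(X,\CO_X)=1-g(Y)$, and then Riemann--Roch on $Y$ for the second equality. The minor differences (writing $D_h\cdot K_X$ explicitly rather than the packaged form $\chi(X,\CO(D_h)) = D_h^2 + 1 + \chi(X,\CO_X) - g(C)$, and the careful remark that adjunction computes the same arithmetic genus for any $C\in|D_h|$) are just bookkeeping.
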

\begin{proof}
Using the adjunction formula and the Riemann-Roch theorem for surfaces as in the above theorem gives us the formula
\begin{equation*}
\chi\left(X,\CO (D_{{h}})\right)=D_{{h}}^2+1+\chi(X,\CO_{X})-g(C)
\end{equation*}
for some $C\in|D_h|$. We can use the above proposition to calculate $g(C)$. Combining this with the facts that $D_{{h}}^2=2\cdot\vol {{h}}$ and  $\chi(X,\CO_{X})=1-g(Y)$ along with lemma \ref{lemma:genpick} completes the proof of the first equality. The second equality follows directly from the theorem of Riemann-Roch for curves.
\end{proof}

At the and of this section we revisit our surface example and study all introduced concepts at it.
\begin{exmp}\label{ex:surface-div}
  \begin{figure}[htbp]
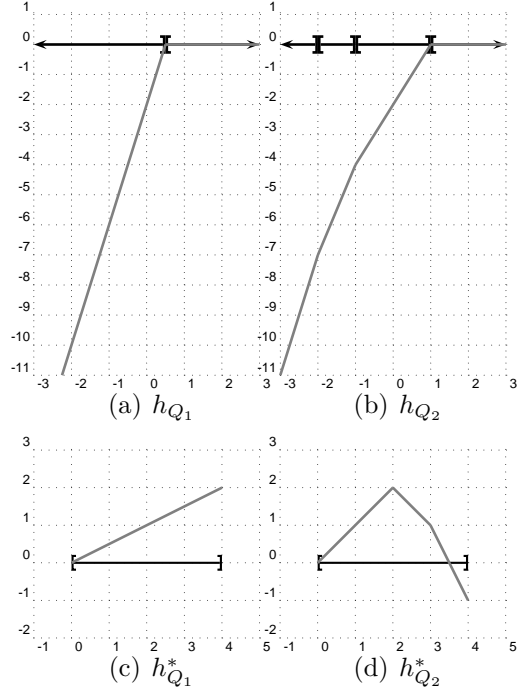

    \centering
    \subfigure[$h_{Q_1}$]{\exmpih}
    \subfigure[$h_{Q_2}$]{\exmpiih}\\
    \subfigure[$h^*_{Q_1}$]{\exmpip}
    \subfigure[$h^*_{Q_2}$]{\exmpiip}
  \caption{$h$ and $h^*$ for a $T$-surface}\label{fig:surface-divisor}
\end{figure}
 We look at the Cartier divisor $D_h$ on our surface example
  where $h_{Q_1}$ and $h_{Q_2}$ are given by the tropical polynomials \tropical{0+(-2)*x^4} and \tropical{0+(-2)*x^2+(-1)*x^3+1*x^4}, respectively. One easily sees that $\Box_h=[0,4]$, and that $h_{Q_1}^*$ and $h_{Q_1}^*$ respectively correspond to the tropical polynomials \tropical{x^{1/2}} and \tropical{x+4*x^{-1}+7*x^{-2}}. In other words, $h_{Q_1}^*(u)=u/2$ and 
\begin{equation*}
	h_{Q_2}^*(u)=\left\{\begin{array}{l@{\textrm{if }}l}
u\qquad&u\leq 2\\
4-u&2\leq u\leq 3\\
7-2u\qquad&u\geq 3.
	\end{array}\right.\\
\end{equation*}
In figure \ref{fig:surface-divisor} we sketch $h$ and the corresponding divisorial polytope $h^*$.

  We can use proposition~\ref{prop:cartier2weil} to compute the corresponding Weil divisor:  $4 D_{\QQ_{\leq 0}} + 4 D_{(Q_2,2)} + 7 D_{(Q_2,1)}$. $D_h$ is semi-ample, so by proposition~\ref{prop:intersection-numbers} we get $(D_h)^2=15$. Finally, from proposition \ref{prop:genusformula} we know that a section of $D_h$ has genus $5+4\cdot g(Y)$.

  We may also start with $h^*$ and take the dual $h$ to construct a fansy divisor as described above. We recover $\fan$ this way. $X:=\tilde{\pdv}(\fan)$ is not smooth, but a refinement of the polyhedral subdivisions (see figure~\ref{fig:refined-div}) gives a smooth surface $X'$ (this is will not be proved here; c.f. \cite{suess08}). Using corollary \ref{cor:euler}, we can calculate that $\chi(X',\CO(D_h))=12-5\cdot g(Y)$.

  \begin{figure}[hbtp]
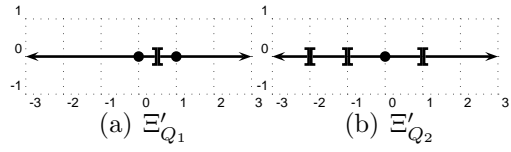

    \centering
       \subfigure[$\fan'_{Q_1}$]{\exmpismooth}
    \subfigure[$\fan'_{Q_2}$]{\exmpiismooth}
    \caption{A refined fansy divisor} \label{fig:refined-div}
  \end{figure}
\end{exmp}

\section{$T$-Codes and their Parameters}
\label{sec:t-codes}
\subsection{Construction}
\label{sec:codes-construction}
Let $Y$ be a curve over $\mathbb{F}_q$ and let $h^*$ be a divisorial polytope. Let  $\mathcal{P}=\{P_1,\ldots,P_l\}$ be some subset of the $\mathbb{F}_q$-rational points of $Y$ such that for $i=1,\ldots,l$, $h_{P_i}^*$ is affine and $h_{P_i}^*(u)\in \mathbb{Z}$ for $u\in \Box_h\cap M$.  Let $\fan$ be the fansy divisor associated to $h^*$ and let $\fan'$ be some minimal refinement such that $X:=\tilde{X}(\fan')$ is smooth. Note that for each point $P_i\in\mathcal{P}$, $\fan'_{P_i}=v(P_i)+\Sigma$, for a unique lattice point $v(P_i)$ and tail fan $\Sigma$. Set $m=\dim M$. For each point $P_i$ let $P_i^1,\ldots,P_i^{(q-1)^m}$ be the $(q-1)^m$ $\mathbb{F}_q$-rational points on $X$ of the open $T$-orbit contracting to $P_i$.

The support function $h$ associated to $h^*$  corresponds to a semi-ample $T$-invariant $\mathbb{F}_q$-rational Cartier divisor $D_h$ on $X$. We denote the corresponding line bundle by $\CO(D_h)$ and let $L(D_h)=\Gamma(X,\CO(D_h))$. For each point $P_i^j$ fix some isomorphism $\CO(D_h)_{P_i^j}\cong \mathbb{F}_q$. Consider the $\mathbb{F}_q$-linear map   
\begin{align*}
	\ev:L(D_h)&\to\mathbb{F}_q^{l(q-1)^m}\\
	f&\mapsto \left(f_{P_1^1},f_{P_1^2},\ldots,f_{P_l^{(q-1)^m}} \right)
\end{align*}
where $f_{P_i^j}$ is the image of $f$ in $\mathbb{F}_q$ following the identification with $\CO(D_h)_{P_i^j}$. In other words, the above map evaluates the rational function $f$ at the $l(q-1)^m$ points $P_i^j$ $1\leq i \leq l$,  $1\leq j \leq (q-1)^m$. The image of $\ev$ is a linear subspace of $\mathbb{F}_q^{l(q-1)^m}$ and thus a linear code of length $n=l(q-1)^m$; we denote it by $\mathcal{C}(Y,h^*,\mathcal{P})$. If $\mathcal{P}$ is maximal, we simply denote it by $\mathcal{C}(Y,h^*)$. Note that although $\mathcal{C}(Y,h^*,\mathcal{P})$ indeed depends on the way we identify $\CO(D_h)_{P_i^j}$ with $\mathbb{F}_q$, its length $n$, dimension $k$, and its minimum distance $d$ do not. Thus, we will always assume that some such isomorphisms are given, but will not concern ourselves further with them. 

\begin{rem}\label{rem:easy-construction}
If $h_{P_i}^*=0$ for $i=1,\ldots,l$, then $\mathcal{C}(Y,h^*,\mathcal{P})$ is equivalent as code to the image of the map
\begin{align*}
	\ev:\bigoplus_{u\in \Box_h\cap M} \Gamma\left(\CO(h^*(u))\right)\chi^u&\to\mathbb{F}_q^{l(q-1)^m}\\
	g\chi^u&\mapsto \left(g(P_1)\chi^u(Q_1),g(P_1)\chi^u(Q_2),\ldots,g(P_l)\chi^u(Q_{(q-1)^m})\right)
\end{align*}
where $Q_1,\ldots,Q_{(q-1)^m}$ are the $\mathbb{F}_q$-rational points of the $m$-dimensional torus. Thus, in this case the isomorphisms $\CO(D_h)_{P_i^j}\cong \mathbb{F}_q$ are not only irrelevant but also unnecessary. Now let $\mathcal{C}_u$ be the $[(q-1)^m,1,(q-1)^m]$ code generated by $\left(t^{u}\right)_{t \in {(\mathbb{F}_q^*)}^m}$ and let $\mathcal{C}(Y,h^*(u),\mathcal{P})$ be the AG code corresponding to the curve $Y$, divisor $h^*(u)$, and  point set $\mathcal{P}$. Then as mentioned in the introduction, we can also define $\mathcal{C}(Y,h^*,\mathcal{P})$ simply as
$$\mathcal{C}(Y,h^*,\mathcal{P})= \sum_{u \in \Box_h \cap M} \mathcal{C}_u \otimes \mathcal{C}(Y,h^*(u),\mathcal{P}).$$
\end{rem}

\subsection{Estimate on Dimension}
\label{sec:codes-dim}
Assume that the map $\ev$ is injective. This is always the case if the bound given below for the minimum distance is larger than zero. We then have that
\begin{equation*}
k=\dim_{\mathbb{F}_q} L(D_h).
\end{equation*}
Using proposition \ref{prop:global_sections}, we thus get that 
\begin{equation*}
k=\sum_{u\in \Box_h\cap M} \dim \Gamma (Y,\mathcal{O}(h^*(u))).
\end{equation*}
We can approximate $k$ using only the combinatorics of $h^*$. Let
\begin{equation*}
\gamma(u)=\left\{\begin{array}{l@{\qquad\textrm{if}\quad} l}
\deg \lfloor h^*(u) \rfloor +1 -g(Y) & \deg \lfloor h^*(u) \rfloor +1 -g(Y)>0\\
1 &\deg \lfloor h^*(u) \rfloor +1 -g(Y)\leq 0 \ \textrm{and}\  h^*(u)\geq 0\\
0 & \textrm{otherwise.} 
\end{array}\right.
\end{equation*}

\begin{prop}\label{prop:codedim}
If the evaluation map $\ev$ is injective, then
\begin{equation}\label{eqn:kbound}
\#h^*+\#(\Box_h\cap M)(1-g)\leq\sum_{u\in \Box_h\cap M} \gamma(u) \leq k\leq \#h^* +\#(\Box_h\cap M). 
\end{equation}
Furthermore, 
\begin{equation}\label{eqn:kequals}
k=\#h^*+\#(\Box_h\cap M)(1-g))
\end{equation} if $\deg h^*(u) >2 g(Y)-2$ for all $u\in\Box_h\cap M$.
\end{prop}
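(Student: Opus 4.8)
The plan is to derive both \eqref{eqn:kbound} and \eqref{eqn:kequals} directly from the formula
$k=\sum_{u\in \Box_h\cap M}\dim\Gamma(Y,\CO(h^*(u)))$,
which is already available since $\ev$ is injective and proposition~\ref{prop:global_sections} identifies $L(D_h)_u$ with $\Gamma(Y,\CO(h^*(u)))$. So everything reduces to estimating the single term $\dim\Gamma(Y,\CO(h^*(u)))$ for a fixed $u\in\Box_h\cap M$, using nothing more than Riemann--Roch and the basic inequality $\dim\Gamma(Y,\CO(E))\le \deg E+1$ for a $\QQ$-divisor $E$ (applied to $\lfloor h^*(u)\rfloor$, noting $\Gamma(Y,\CO(h^*(u)))=\Gamma(Y,\CO(\lfloor h^*(u)\rfloor))$).

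First I would establish the elementary per-weight bound $\gamma(u)\le \dim\Gamma(Y,\CO(h^*(u)))\le \deg\lfloor h^*(u)\rfloor+1$. The upper bound is the standard $h^0\le \deg+1$ estimate. For the lower bound I would split into the three cases defining $\gamma(u)$: when $\deg\lfloor h^*(u)\rfloor+1-g>0$, Riemann--Roch $\dim\Gamma(Y,\CO(\lfloor h^*(u)\rfloor))=\deg\lfloor h^*(u)\rfloor+1-g+\dim H^1\ge \deg\lfloor h^*(u)\rfloor+1-g=\gamma(u)$ does it; when this quantity is $\le 0$ but $h^*(u)\ge 0$, then $\lfloor h^*(u)\rfloor$ is effective so $\Gamma$ contains the constants and $\dim\ge 1=\gamma(u)$; otherwise $\gamma(u)=0$ and the bound is trivial. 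Summing the per-weight bound over $u\in\Box_h\cap M$, and recalling $\#h^*=\sum_{u\in\Box_h\cap M}\deg\lfloor h^*(u)\rfloor$ and that $\#(\Box_h\cap M)$ is the number of summands, gives the two outer inequalities $\sum_u\gamma(u)\le k\le \#h^*+\#(\Box_h\cap M)$. The remaining leftmost inequality $\#h^*+\#(\Box_h\cap M)(1-g)\le\sum_u\gamma(u)$ follows termwise from $\gamma(u)\ge \deg\lfloor h^*(u)\rfloor+1-g$, which holds in all three cases (in the first it is an equality; in the second and third $\gamma(u)\ge 0\ge \deg\lfloor h^*(u)\rfloor+1-g$ since we are in the regime $\deg\lfloor h^*(u)\rfloor+1-g\le 0$).

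For \eqref{eqn:kequals}, the hypothesis $\deg h^*(u)>2g(Y)-2$ for all $u\in\Box_h\cap M$ forces $\deg\lfloor h^*(u)\rfloor\ge 2g-1$ once one checks the floor does not drop the degree below $2g-1$; more carefully, $\deg h^*(u)>2g-2$ together with integrality of $\deg\lfloor h^*(u)\rfloor$ and $\deg\lfloor h^*(u)\rfloor\ge \deg h^*(u)-(\text{number of points in the support})$ is not quite automatic, so the cleanest route is to invoke the integrality built into a divisorial polytope. Indeed, for $h\in SF(\fan)$ arising from a divisorial polytope, $h^*_P(u)\in\ZZ$ need not hold for every $P$, but $\deg\lfloor h^*(u)\rfloor = \sum_P \lfloor h^*_P(u)\rfloor$; I would argue that under the stated hypothesis $\deg\lfloor h^*(u)\rfloor\ge 2g-1$ anyway—if not, one can still use that $H^1(Y,\CO(\lfloor h^*(u)\rfloor))$ vanishes whenever $\deg\lfloor h^*(u)\rfloor>2g-2$, i.e. $\ge 2g-1$, by Serre duality—and then Riemann--Roch gives $\dim\Gamma=\deg\lfloor h^*(u)\rfloor+1-g$ exactly, so summing yields $k=\#h^*+\#(\Box_h\cap M)(1-g)$.

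The main obstacle is purely bookkeeping at the floor: relating $\deg\lfloor h^*(u)\rfloor$ to $\deg h^*(u)$ and making sure that "$\deg h^*(u)>2g-2$'' really does deliver $H^1$-vanishing for $\lfloor h^*(u)\rfloor$. Since each $h^*_P$ is concave piecewise affine with lattice vertices (condition~\ref{item:integral} of definition~\ref{def:divpoly}) and the points $P_i\in\mathcal P$ contribute integer values, the only non-integer contributions to $h^*(u)$ come from finitely many other points, and one should verify these do not sabotage the degree bound; I expect this is where the argument needs a short but careful lemma, after which \eqref{eqn:kequals} is immediate from Riemann--Roch.
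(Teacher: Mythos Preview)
Your proposal is correct and follows essentially the same approach as the paper: reduce to the per-weight estimates $\gamma(u)\le\dim\Gamma(Y,\CO(h^*(u)))\le\deg\lfloor h^*(u)\rfloor+1$ via Riemann--Roch and the standard $h^0\le\deg+1$ bound, then sum over $u\in\Box_h\cap M$. You are in fact more careful than the paper's own proof about the distinction between $\deg h^*(u)$ and $\deg\lfloor h^*(u)\rfloor$; the paper simply writes $\deg h^*(u)$ throughout and does not address the floor issue you flag for \eqref{eqn:kequals}.
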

\begin{proof}
The leftmost inequality in \eqref{eqn:kbound} follows from the definition of $\gamma(u)$. We now consider the second inequality in \eqref{eqn:kbound}. Fix some degree $u\in \Box_h\cap M$. Then we always have $\dim \Gamma(Y,\CO(h^*(u)))\geq 0$, and if $h^*(u)$ is effective, then $\dim \Gamma(Y,\CO(h^*(u)))\geq 1$. Using the theorem of Riemann-Roch (see for example \cite{MR0463157}) we also have $\dim \Gamma(Y,\CO(h^*(u)))\geq \deg h^*(u)+1-g$ and the inequality follows. If $\deg h^*(u) >2 g(Y)-2$ then equality holds, so \eqref{eqn:kequals} follows. Finally, the right inequality in \eqref{eqn:kbound} follows from  $\dim \Gamma(Y,\CO(h^*(u)))\leq \deg h^*(u)+1$.
\end{proof}

\subsection{General Lower Bound on Minimum Distance}
\label{sec:codes-dist}
One strategy to get an estimate for $d$ is using techniques of intersection theory, as first presented in \cite{MR1866342}. These techniques have been applied to toric varieties, see for example \cite{MR1953195} and \cite{MR2360532}. We first consider the general case and then specialize to surfaces.

Let $e_1^*,\ldots,e_m^*$ be a basis for $M$. For $P\in\mathcal{P}$ and $\eta_1,\ldots,\eta_{m-1}\in\mathbb{F}_q^*$ define  $l(q-1)^{m-1}$ curves
\begin{equation*}
C_{P,\eta_1,\ldots,\eta_{m-1}}:=(P,v(P))\cap V\left(\{\chi^{e_i^*}-\eta_i\}_{i=1}^{m-1}\right).
\end{equation*}
Each point $P_i^j$ lies on exactly one of these curves. Furthermore, each curve $C_{P,\eta_1,\ldots,\eta_{m-1}}$ is rationally equivalent to  
\begin{equation*}
	C_P:=(P,v(P))\cap V\left(\{\chi^{e_i^*}\}_{i=1}^{m-1}\right)=D_{0-P}\cdot {(D_{-e_1^*})}_{\geq 0}\cdot\ldots\cdot{(D_{-e_{m-1}^*})}_{\geq 0}
\end{equation*}
where the second equality follows from proposition \ref{prop:cartier2weil}, $e_i^*$ is considered as an element of $SF(\fan)$, and ${(D_{-e_i^*})}_{\geq 0}$ is the effective part of $D_{-e_i^*}$.

Fix some section $s\in L(D_h)$; this corresponds to an effective divisor $(s)_0=D_h+(s)$. By $Z(s)$ we denote the number of points $P_i^j$ such that $s_{P_i^j}=0$. Equivalently, $Z(s)$ is the number of points $P_i^j$ contained in the support of $(s)_0$. Thus, one has the following lower bound for the minimum distance:
\begin{equation*}
d\geq l(q-1)^m-\max_{s\in L(D_h)} Z(s). 
\end{equation*}

Let $(s)_0$ vanish on exactly $\lambda$ of the curves $\{C_{P,\eta_1,\ldots,\eta_{m-1}}\}$. Following \cite{MR1866342} and setting $C=C_{P}$ for some $P\in\mathcal{P}$ we then have that 
\begin{equation}\label{eqn:zs-bad}
Z(s)\leq \lambda(q-1) + (l-\lambda)D_h\cdot C
\end{equation}
since $(s)_0\sim D_h$ and it follows from corollary \ref{cor:num-equiv} that $D_h\cdot C=D_h\cdot C_{P_i}=D_h\cdot C_{P_i,\eta_1,\ldots,\eta_{m-1}}$ for all $1\leq i\leq l$. Assuming that Kodaira's vanishing theorems holds on $X$, we can use proposition \ref{prop:intersection-numbers} to calculate $D_h\cdot C$. 

We now bound $\lambda$ in a method similar to \cite{MR2360532}. For the divisorial polytope $h^*:\Box_h\to \wdiv_\QQ Y$ let $\pr(\Box_h)$ be the projection of $\Box_h$ to $M/\mathbb{Z}e_m^*$ and define $\pr(h^*):\pr(\Box_h)\to\wdiv_\QQ (Y)$ by 
\begin{equation*}
\pr(h^*)_P(u)=\max_{(u,u_m)\in\Box_h\cap M} h_P^*((u,u_m)).
\end{equation*}
One easily checks that $\pr(h^*)$ is a divisorial polytope.
Assume that $\Box_h\subset \widetilde{u}+\{u\in M | 0\leq u_i \leq q-2\}$ for some $\widetilde{u}=(\widetilde{u}_1,\ldots,\widetilde{u}_m)\in M$. This also then holds for $\pr(\Box_h)$. We can write
\begin{equation*}
s=\chi^{\widetilde{u}_me_m^*}\cdot\left(s_0+s_1\chi^{e_m^*}+s_{q-2}\chi^{(q-2)e_m^*}\right)
\end{equation*}
where $s_i\in K(Y)(\chi^{u_1},\ldots,\chi^{u_{m-1}})$. In fact, one easily checks that $s_i\in L(D_{\pr(h)})$, where $D_{\pr(h)}$ is the $T$-invariant Cartier divisor on the $m$-dimensional $T$-variety $X_{\pr( h^* )}$ over $Y$ both determined by $\pr (h^* )$. If we restrict $s\cdot \chi^{-\widetilde{u}_me_m^*}$ to some curve $C_{P,\eta_1,\ldots,\eta_{m-1}}$ we get a polynomial $\overline{s}=\overline{s}_0+\overline{s}_1\chi^{e_m^*}+\overline{s}_{q-2}\chi^{(q-2)e_m^*}\in \mathbb{F}_q[\chi^{e_m}]$ of degree less than or equal to $q-2$. If $C_{P,\eta_1,\ldots,\eta_{m-1}}$ is a curve where $s$ vanishes, then $\overline{s}$ has $q-1$ zeros, so $\overline{s}\equiv 0$ and $\overline{s_i}=0$ for $0\leq i\leq q-2$. Thus the section $s_i \in L(D_{\pr(h)})$ vanishes on the point of $X_{\pr( h^* )}$ corresponding to the tuple $({P,\eta_1,\ldots,\eta_{m-1}})$. It follows that 
\begin{equation*}
\lambda \leq \max_{t\in L(D_{\pr(h)})} Z(t).
\end{equation*}
Thus, we can recursively bound $\lambda$ until $\dim(X)=2$.

\subsection{Lower Bound on Minimum Distance for $\dim(X)=2$}
We can provide a much better bound for $Z(s)$ when $X$ is a surface. Consider a global section $s$ of $\CO(D_h)$ as before such that $(s)_0$ vanishes on exactly $\lambda$ of the curves $\{C_{P_i}\}$, say $C_{Q_1},\ldots,C_{Q_\lambda}$ where the $Q_i$ are distinct points in $\mathcal{P}$. Thus, $s\in L(D_{\widetilde{h}})$, where $\widetilde{h}=h+\sum_{i=1}^\lambda Q_i$. Since $\widetilde{h}$ and $\sum_{i=1}^\lambda (-Q_i)$ are concave, it follows that $h^*=\widetilde{h}^*+(\sum_{i=1}^\lambda (-Q_i))^*$. In particular, we have that
\begin{equation*}
	\deg\widetilde{h}^*(u)=\deg h^*(u)-\lambda.
\end{equation*} 
Thus, $s$ can only have support in the weights $u\in\Box_{(h,\lambda)}$,  where \begin{equation*}
\Box_{(h,\lambda)}=\left\{u\in\Box_h\cap M |\deg \lfloor h^*(u) \rfloor \geq \lambda\right\}.
\end{equation*} 
It follows immediately that
\begin{equation*}
\lambda \leq \max_{u\in\Box_h\cap M} \deg \lfloor h^*(u) \rfloor :=\lambda_{0}. 
\end{equation*}

Having found a good bound for $\lambda$, we now try to improve on the upper bound for $Z(s)$ in equation \eqref{eqn:zs-bad}. By choosing a generator we can identify the lattice $N$ with $\mathbb{Z}$. Then $\sigma_{-}:=\mathbb{Q}_{\leq0}$ and $\sigma_{+}:=\mathbb{Q}_{\geq 0}$ are the two rays in $\Sigma$. Each of these rays corresponds to a $T$-invariant divisor. Let $\mu_{-}$ and $\mu_{+}$ respectively be the  coefficients of the prime divisors ${\sigma_-}$ and ${\sigma_+}$ in $(s)_0$. We want to find a lower bound for the sum $\mu_{-}+\mu_+$. This is easy if $s$ has support only in a single weight $u$, say $s=f\cdot \chi^u$: In this case, $(s)$ is $T$-invariant corresponding to the support function $-u-\divisor(f)$ and thus $\mu_{-}+\mu_+=-h_0(-1)-h_0(1)$ using proposition \ref{prop:cartier2weil}.

Let $u_{\min}$ and $u_{\max}$ be respectively the smallest and the largest weights in which $s$ has non-trivial support and let $\nu=u_{\max}-u_{\min}$. Note that we can bound $\nu$ by
\begin{equation*}
\nu\leq \nu(\lambda):=\max \Box_{(h,\lambda)}-\min \Box_{(h,\lambda)}.
\end{equation*}
Let $\mathcal{S}$ be some set of polyhedral divisors corresponding to some open covering of $X$ and consider some polyhedral divisor $\D\in\mathcal{S}$.
Now, the divisor $\sigma_-$ or $\sigma_+$ is contained in $\tilde{X}(\D)$ if and only if $\D$ has respectively $\sigma_-$ or $\sigma_+$ as tail cone. If the tail cone of $\D$ is $\sigma_+$, we can write 
\begin{equation*}
s=\chi^{u_{\min}} f^{-1} \cdot (s_0+s_1\chi+\ldots+s_\nu\chi^\nu)
\end{equation*}
 with $f,s_0,\ldots,s_\nu \in\CO(\loc\D)$
and so $(s)$ is the sum of some effective divisor and the $T$-invariant principal divisor $(f^{-1}\cdot\chi^{u_{\min}})$. Thus, using proposition \ref{prop:cartier2weil}, we have $\mu_+\geq -h_0(1)+u_{\min}$. On the other hand, if the tail cone of $\D$ is $\sigma_-$, we can write 
\begin{equation*}
s=\chi^{u_{\max}} f^{-1} \cdot (s_0\chi^{-\nu}+s_1\chi^{-\nu+1}+\ldots+s_\nu)
\end{equation*}
 with $f,s_0,\ldots,s_\nu \in\CO(\loc\D)$. Thus, using proposition \ref{prop:cartier2weil} again, we have $\mu_-\geq -h_0(-1)-u_{\max}$. Combining these two inequalities gives us
\begin{equation*}
\mu_{-}+\mu_+\geq \vol\Box_h -\nu\geq\vol\Box_h -\nu(\lambda),
\end{equation*}
where we use the easily checked fact that $-h_0(-1)-h_0(1)=\vol\Box_h$.

Now, each curve $C_P$ intersects with $\sigma_+$ in one point; similarly, $C_P$ and $\sigma_-$ intersect in some other point. Neither of these points is one of the points $P_i^j$ at which we are evaluating our section $s$. This means that for each of the $l-\lambda$ curves where we calculate the number of zeros of $(s)_0$ using intersection numbers, we have counted at least $\mu_-+\mu_+$ too many points. Furthermore, we can use proposition \ref{prop:intersection-numbers} to calculate that $D_h\cdot C=\vol\Box_h$. Thus, we can improve equation \eqref{eqn:zs-bad} to
\begin{equation*}
Z(s)\leq \lambda(q-1) + (l-\lambda)\nu(\lambda).
\end{equation*}
 
Summing up the results obtained here leads to the following:

\begin{prop}\label{prop:surfaceparam}
Let $\mathcal{C}(Y,h^*,\mathcal{P})$ be a toric code on a two-dimensional $T$-variety. Then the minimum distance of this code is bounded from below by
\begin{equation*}
d\geq \min_{0\leq \lambda\leq \lambda_{0} }\left[(l-\lambda)(q-1-\nu(\lambda))\right].
\end{equation*}
\end{prop}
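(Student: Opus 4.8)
The proof will be a matter of assembling the estimates derived in the paragraphs immediately preceding the statement. Since $X$ is a surface the acting torus is one-dimensional ($m=1$), so the generic lower bound specializes to $d\geq l(q-1)-\max_{s} Z(s)$, where the maximum runs over nonzero $s\in L(D_h)$; the whole task is to bound $Z(s)$ uniformly from above.

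First I would fix a nonzero $s\in L(D_h)$ and let $\lambda=\lambda(s)$ denote the number of curves among $C_{P_1},\dots,C_{P_l}$ on which the effective divisor $(s)_0$ vanishes identically. Two facts are already available: the bound $\lambda\leq\lambda_0$, coming from the observation that $s$ can only have support in weights $u$ with $\deg\lfloor h^*(u)\rfloor\geq\lambda$, so that $\Box_{(h,\lambda)}$ is nonempty; and the refined inequality $Z(s)\leq\lambda(q-1)+(l-\lambda)\nu(\lambda)$, obtained by combining $D_h\cdot C_P=\vol\Box_h$ (Proposition~\ref{prop:intersection-numbers}), the identification $D_h\cdot C_{P_i}=D_h\cdot C_{P_i,\eta_1,\dots,\eta_{m-1}}$ (Corollary~\ref{cor:num-equiv}), and the lower estimate $\mu_-+\mu_+\geq\vol\Box_h-\nu(\lambda)$ for the multiplicities of $(s)_0$ along the two $T$-invariant boundary divisors, which upgrades \eqref{eqn:zs-bad}.

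The remaining step is purely algebraic. Rewriting
\begin{equation*}
l(q-1)-\bigl(\lambda(q-1)+(l-\lambda)\nu(\lambda)\bigr)=(l-\lambda)\bigl(q-1-\nu(\lambda)\bigr),
\end{equation*}
one gets, for every nonzero $s$, the inequality $l(q-1)-Z(s)\geq(l-\lambda(s))\bigl(q-1-\nu(\lambda(s))\bigr)$, and since $0\leq\lambda(s)\leq\lambda_0$ the right-hand side is at least $\min_{0\leq\lambda\leq\lambda_0}(l-\lambda)(q-1-\nu(\lambda))$. Taking the minimum over all $s$ with $\ev(s)\neq 0$ then yields the claimed bound on $d$ (no injectivity hypothesis is needed here, since replacing the maximum of $Z(s)$ over $\ev(s)\neq 0$ by the maximum over all nonzero $s$ only weakens the bound). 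I do not anticipate any real obstacle: the genuine content—the intersection-number computation, the bound on $\lambda$, and the multiplicity estimate—has all been carried out before the statement, and what is left is bookkeeping. The one point worth a remark is that $\nu(\lambda)$ is weakly increasing while $l-\lambda$ is weakly decreasing in $\lambda$, so the discrete minimum over $0\leq\lambda\leq\lambda_0$ is meaningful and is typically attained at an endpoint, although this observation is not required for the inequality itself.
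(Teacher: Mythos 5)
Your proposal is correct and matches the paper's argument, which in place of a displayed proof simply assembles the bounds $\lambda\leq\lambda_0$ and $Z(s)\leq\lambda(q-1)+(l-\lambda)\nu(\lambda)$ derived immediately before the statement and then rearranges. One small misstatement in your closing aside: since $\Box_{(h,\lambda)}$ shrinks as $\lambda$ grows, $\nu(\lambda)$ is weakly \emph{decreasing}, not increasing---inessential to the proof, but it is exactly why the minimum over $\lambda$ is not automatically attained at $\lambda_0$.
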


\begin{rem}
In the literature concerning toric surface codes, the estimate for the minimum distance often contains a term involving the self-intersection number of one of the curves $C_P$. In our case, this term does not help since $C_P^2=0$, which can be easily seen using proposition \ref{prop:intersection-numbers}. However, the correction we make using $\mu_+$ and $\mu_-$ has a similar effect.
\end{rem}

\subsection{Upper Bound on Minimum Distance}
A simple upper bound on the minimum distance of a toric code is given in  \cite{MR2360532}. We adapt this to the case of $T$-varieties. This then gives us a way of testing if the lower bound on minimum distance attained above is sharp:

\begin{prop}\label{prop:kupper}
	Let $f\in K(Y)$ be such that $f\cdot \chi^u\in L(D_h)$ for all $u\in B\cap M $, where $B$ is lattice isomorphic to a lattice hyper-rectangle with side lengths $r_1,\ldots,r_m$, $r_i\leq q-1$. Furthermore, suppose that $f$ vanishes at $r_0$ of the points $P_i\in\mathcal{P}$. Then 
\begin{equation}\label{eqn:kupper} d\leq (l-r_0)\cdot \left( (q-1)^m+\sum_{j=1}^m(-1)^{j}\sum_{i_1<\ldots<i_j}r_{i_1}\cdots r_{i_j}(q-1)^{m-j}\right). \end{equation} 
In particular, for $m=1$ we have $d\leq l(q-1)-r_1l-r_0(q-1)+r_0r_1$. 
\end{prop}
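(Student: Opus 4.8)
The plan is to exhibit an explicit codeword of low weight, adapting the construction used for toric surface codes in \cite{MR2360532}. Using the lattice isomorphism, write $B\cap M=\{u_0+\sum_{i=1}^m a_iv_i\mid 0\le a_i\le r_i\}$ where $u_0\in M$ and $v_1,\dots,v_m$ is a basis of $M$. Since $r_i\le q-1$, for each $i$ one may choose $r_i$ pairwise distinct elements $\eta_{i,1},\dots,\eta_{i,r_i}\in\FF_q^*$, and I would set
$$s:=f\,\chi^{u_0}\cdot\prod_{i=1}^m\prod_{k=1}^{r_i}\bigl(\chi^{v_i}-\eta_{i,k}\bigr).$$
Expanding the product, $s$ is an $\FF_q$-linear combination of the elements $f\chi^u$ with $u\in B\cap M$, each of which lies in $L(D_h)$ by hypothesis; hence $s\in L(D_h)$. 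The coefficient of $f\chi^{u_0+\sum_i r_iv_i}$ in this expansion equals $1$, so $s\ne0$, and since $\ev$ is injective, $\ev(s)$ is a nonzero codeword.

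Next I would count the coordinates of $\ev(s)$ that vanish. Each point $P_i^j$ lies over a point $P_i\in\mathcal{P}$ and, being in the open $T$-orbit of the fibre, carries a torus coordinate $t=t(P_i^j)\in(\FF_q^*)^m$. Because the $h_{P_i}^*$ are affine and integral and $\fan'_{P_i}=v(P_i)+\Sigma$, the fibre over $P_i$ is a toric one, and as in remark~\ref{rem:easy-construction} the value $\ev(g\chi^u)_{P_i^j}$ equals $\kappa_{P_i^j}\cdot g(P_i)\cdot\chi^u(t)$ for a fixed constant $\kappa_{P_i^j}\in\FF_q^*$ depending only on the chosen trivialization of $\CO(D_h)_{P_i^j}$. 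By linearity of $\ev$ it follows that $\ev(s)_{P_i^j}=\kappa_{P_i^j}\cdot f(P_i)\cdot\chi^{u_0}(t)\cdot\prod_{i'=1}^m\prod_{k=1}^{r_{i'}}\bigl(\chi^{v_{i'}}(t)-\eta_{i',k}\bigr)$.

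For each of the $r_0$ points $P_i$ at which $f$ vanishes, $\ev(s)_{P_i^j}=0$ for all $(q-1)^m$ values of $j$. For each of the remaining $l-r_0$ points $P_i$ one has $f(P_i)\ne0$ and $\chi^{u_0}(t)\ne0$, so $\ev(s)_{P_i^j}=0$ exactly when $\chi^{v_{i'}}(t)\in\{\eta_{i',1},\dots,\eta_{i',r_{i'}}\}$ for some $i'$; since $v_1,\dots,v_m$ is a basis of $M$, the assignment $t\mapsto(\chi^{v_1}(t),\dots,\chi^{v_m}(t))$ is a bijection of $(\FF_q^*)^m$, so the number of $t$ with $\ev(s)_{P_i^j}\ne0$ equals $\prod_{i=1}^m(q-1-r_i)$. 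Summing over all points $P_i$, the number of vanishing coordinates of $\ev(s)$ is
$$r_0(q-1)^m+(l-r_0)\Bigl((q-1)^m-\prod_{i=1}^m(q-1-r_i)\Bigr)=l(q-1)^m-(l-r_0)\prod_{i=1}^m(q-1-r_i),$$
so $\ev(s)$ has weight $(l-r_0)\prod_{i=1}^m(q-1-r_i)$ and therefore $d\le(l-r_0)\prod_{i=1}^m(q-1-r_i)$. Expanding $\prod_{i=1}^m(q-1-r_i)$ via the elementary symmetric functions in $r_1,\dots,r_m$ turns this into the bound \eqref{eqn:kupper}, and the stated $m=1$ case is the special value $\prod=q-1-r_1$.

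The step I expect to be the main obstacle is the evaluation identity for $\ev(g\chi^u)_{P_i^j}$ and, relatedly, the assertion that $f(P_i)=0$ forces $\ev(s)_{P_i^j}=0$: one has to verify that, under the identifications $\CO(D_h)_{P_i^j}\cong\FF_q$ fixed in Section~\ref{sec:codes-construction}, evaluation of $T$-homogeneous rational sections at $P_i^j$ is ``multiplicative up to one fixed unit per point''. This is precisely where the hypotheses on $\mathcal{P}$ (affineness and integrality of $h_{P_i}^*$, disjointness of supports) are used, through the explicit local description of $X$ near the toric fibre over $P_i$; once that local model is set up, the remaining combinatorics is routine.
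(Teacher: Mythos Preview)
Your proof is correct and follows essentially the same approach as the paper: construct the explicit section $s=f\chi^{u_0}\prod_{i,k}(\chi^{v_i}-\eta_{i,k})\in L(D_h)$ and count the evaluation points at which it vanishes. The only cosmetic differences are that the paper counts the \emph{vanishing} coordinates by inclusion--exclusion (arriving directly at the alternating sum in \eqref{eqn:kupper}), whereas you count the \emph{nonvanishing} ones via the closed product $(l-r_0)\prod_i(q-1-r_i)$ and then expand; and that the paper simply asserts ``one easily checks that $f'\in L(D_h)$'' and computes zeros without further comment, while you spell out the expansion argument and worry about the evaluation identity. Your concern in the last paragraph is legitimate but the paper takes it for granted, relying implicitly on the local toric description of the fibre over each $P_i\in\mathcal{P}$ (cf.\ Remark~\ref{rem:easy-construction}); once that is accepted the two proofs are the same. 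One small redundancy: you do not need injectivity of $\ev$ to conclude $\ev(s)\ne 0$, since your own count already shows the weight of $\ev(s)$ equals $(l-r_0)\prod_i(q-1-r_i)$, which is positive whenever the bound is nontrivial.
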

\begin{proof}
Choose a basis $e_1^*,\ldots,e_m^*$ of the lattice $M$ such that $B=\widetilde{u}+\prod_{i=1}^m [0,r_i]$. Let $\mathbb{F}_q^*=\{\eta_1,\ldots,\eta_{q-1}\}$. Now consider the rational function 
\begin{equation*}
f':=f\cdot\chi^{\widetilde{u}}\cdot\prod_{i=1}^m \prod_{j=1}^{r_i} (\chi^{e_i^*}-\eta_j).
\end{equation*}
One easily checks that $f'\in L(D_h)$. On the other hand, using inclusion-exclusion one sees that for each point $P_i\in Y$, $f'$ vanishes on 
\begin{equation*}
\sum_{j=1}^m(-1)^{j+1}\sum_{i_1<\ldots<i_j}r_{i_1}\cdots r_{i_j}(q-1)^{m-j} \end{equation*}
rational points of the open $T$-orbit contracting to $P_i$. The function $f'$ vanishes entirely on $r_0$ of these orbits, each of which has $(q-1)^m$ relevant points. Using inclusion-exclusion again and subtracting the total number of points on which $f'$ vanishes from the length $n=l(q-1)^m$ yields the desired result.
\end{proof}

As a consequence of the above proposition we get the following corollary:
\begin{cor}\label{cor:kupper}
Let $B\subset \Box_h$ be lattice isomorphic to a lattice hyper-rectangle with side lengths $r_1,\ldots,r_m$, $r_i\leq q-1$. Furthermore, for each $Q_j\in Y(\mathbb{F}_q)$ let $c_j\in\mathbb{Z}$ be such that $h_{Q_j}^*(u)\geq b_j$ for all $u\in \Box_h \cap M$. Inequality \eqref{eqn:kupper} then holds for $r_0:=\left(\sum c_j\right)-g(Y)$.  
\end{cor}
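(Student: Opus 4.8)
The plan is to exhibit a single rational function $f\in K(Y)$ that meets all the hypotheses of Proposition~\ref{prop:kupper} for the given hyper-rectangle $B$ and for the stated value $r_0=\big(\sum_j c_j\big)-g(Y)$; once such an $f$ is produced, the corollary is an immediate application of that proposition. If $r_0\le 0$ there is nothing to prove, so assume $r_0\ge 1$ (and that $\mathcal P$ contains at least $r_0$ points).

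The first step is to turn the requirement ``$f\cdot\chi^u\in L(D_h)$ for all $u\in B\cap M$'' into a condition on an honest divisor on $Y$. By Proposition~\ref{prop:global_sections}, $f\chi^u\in L(D_h)$ exactly when $\divisor(f)+\lfloor h^*(u)\rfloor\ge 0$; imposing this for all $u\in B\cap M$ simultaneously is, taking the componentwise minimum of the divisors $\lfloor h^*(u)\rfloor$, the same as $f\in\Gamma(Y,\CO(E))$, where
$$E:=\sum_{P\in Y}\Big(\min_{u\in B\cap M}\big\lfloor h^*_P(u)\big\rfloor\Big)\,P .$$
This is a well-defined divisor, since $h^*_P\equiv 0$ and hence the coefficient vanishes for all but finitely many $P$.

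The second step is a lower bound for $\deg E$. For each $Q_j$ the hypothesis gives $h^*_{Q_j}(u)\ge c_j$ on $\Box_h\cap M\supseteq B\cap M$ with $c_j\in\mathbb Z$, so the coefficient of $Q_j$ in $E$ is at least $c_j$; at every other prime divisor of $Y$ the coefficient of $E$ is $\ge 0$ (this is where one uses that $\mathcal P$ is disjoint from the supports of the $h^*(u)$ and, if necessary, enlarges the index set of the $Q_j$ by the finitely many prime divisors carrying the negative part of some $h^*(u)$). Hence $\deg E\ge\sum_j c_j$. Now choose pairwise distinct points $P_{i_1},\dots,P_{i_{r_0}}\in\mathcal P$. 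Since the coefficient of each $P_{i_k}$ in $E$ is $0$, the extra requirement $\ord_{P_{i_k}}(f)\ge 1$ is precisely $f\in\Gamma(Y,\CO(F))$ with $F:=E-\sum_{k=1}^{r_0}P_{i_k}$, and $\deg F=\deg E-r_0\ge g(Y)$. By the Riemann--Roch inequality $\dim\Gamma(Y,\CO(F))\ge\deg F+1-g(Y)\ge 1$, so a nonzero such $f$ exists. This $f$ satisfies $f\chi^u\in L(D_h)$ for every $u\in B\cap M$ and vanishes at the $r_0$ points $P_{i_1},\dots,P_{i_{r_0}}$ of $\mathcal P$, so Proposition~\ref{prop:kupper} applied to $f$, $B$ and $r_0$ yields inequality \eqref{eqn:kupper}.

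I expect the only genuinely delicate point to be the bound $\deg E\ge\sum_j c_j$: one must be sure that no prime divisor outside the list $\{Q_j\}$ contributes a negative coefficient to $E$, which is exactly why the $Q_j$ should be understood to run over all prime divisors meeting the support of some $h^*(u)$ (in particular over all of $Y(\mathbb F_q)$ meeting $\operatorname{supp} h^*$). Everything else — the reduction to Proposition~\ref{prop:kupper}, the bookkeeping with the floor functions, and the Riemann--Roch dimension count — is routine.
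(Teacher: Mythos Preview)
Your argument is correct and follows essentially the same route as the paper: reduce to Proposition~\ref{prop:kupper} by producing, via Riemann--Roch on $Y$, a nonzero $f$ in the linear system of a divisor of degree $\ge g(Y)$ obtained by subtracting $r_0$ points of $\mathcal P$ from a divisor that lies below every $h^*(u)$. The only cosmetic difference is that the paper works directly with $D=\sum_j c_j Q_j$ and invokes $L(D)\subset L(h^*(u))$, whereas you pass through the slightly larger divisor $E=\sum_P\min_{u\in B\cap M}\lfloor h^*_P(u)\rfloor\,P$; since $E\ge D$ this changes nothing, and your added care with floors and with the possible negative coefficients outside $\{Q_j\}$ is welcome bookkeeping rather than a new idea.
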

\begin{proof}
Using the above proposition, we just need to find $f\in K(Y)$ such that $f\cdot \chi^u\in L(D_h)$ for all $u\in B\cap M $ and such that $f$ vanishes at $r_0$ of the points $P_i\in \mathcal{P}$. Now, for any $r_0$ points $P_1,\ldots,P_{r_0}\in Y(\mathbb{F}_q)$, the divisor $D:=\sum c_j Q_j$ on $Y$ has a global section $f$ which vanishes on all $r_0$ points. Indeed, by Riemann-Roch $$\dim L(D-\sum_{i=1}^{r_0} P_i)= \left(\sum c_j\right) - r_0 +1-g(Y)=1.$$ Now since $h_{Q_j}^*(u)\geq c_j$ for all $u\in \Box_h \cap M$, $L(D)\subset L(h^*(u))$ for all $u\in B\cap M$ and we have found $f$ as desired.
\end{proof}

\begin{rem}
In the case of a toric code, the above corollary gives exactly the upper bound of \cite{MR2360532}.
\end{rem}

\section{Examples}
\label{sec:examples}
\subsection{Ruled Surfaces from Decomposable Vector Bundles}
\label{sec:ruled-surfaces}
Codes on ruled surfaces, or equivalently $\mathbb{P}^1$-bundles over a curve $Y$, were first considered in \cite{MR1866342}, where formulas for $n$ and $k$ and a lower bound for $d$ are given; global sections of some line bundle on $X$ are evaluated at all $\mathbb{F}_q$-rational points. This was then applied in \cite{lomont:2003} to surfaces of the form $X=\Proj (\CO_Y\oplus \CO_Y(-e))$. Assuming that the lower bound attained for $d$ there is sharp, the resulting codes are never better than a product code coming from a Reed-Solomon and Goppa code. However, by restricting the points at which we evaluate to a smaller set, better codes can be found. Indeed, consider the case $Y=\mathbb{P}^1$, $e>0$, where the resulting surface is the Hirzebruch surface $\mathcal{H}_e$, a toric variety. Codes obtained by evaluation on the points of the torus were considered in \cite{MR1953195}, with parameters considerably better than those of product codes. We wish to generalize this to bundles over curves of higher genus.

Consider the rank two locally free sheaf
\begin{equation*}
\mathcal{E}=\CO_Y\oplus\CO\left(\sum_{Q_i\in \mathbb{F}_q(Y)} \alpha_i Q_i\right)
\end{equation*}
for $\alpha_i\in\mathbb{Z}$ and set $X=\Proj (\mathcal{E})$. Any ruled surface coming from a decomposable vector bundle is isomorphic to such a $X$. Furthermore, $X$ can easily be described as a $T$-variety. Let $\Sigma\subset \mathbb{Q}$ be the fan consisting of the cones $\mathbb{Q}_{\leq 0}$, $\mathbb{Q}_{\geq 0}$, and $\{0\}$, and let $\fan$ be the fansy divisor with $\fan_{Q_i}=\alpha_i+\Sigma$. Then one can easily confirm that $X=\tilde{X}(\fan)$. We set $\alpha=\sum \alpha_i$.

Consider now any semi-ample $T$-invariant Cartier divisor $D_h$ on $X$. Then $h_0$ is of the form
\begin{equation*}
	h_0(v)=\left\{\begin{array}{l@{\textrm{if }}l}
u_{\max}\cdot v\qquad&v\leq 0\\
u_{\min}\cdot v& v\geq 0\\
	\end{array}\right.
\end{equation*} 
for some $u_{\min},u_{\max}\in\mathbb{Z}$ with $a:=u_{\max}-u_{\min}\geq 0$. It follows that $\Box_h=[u_{\min},u_{\max}]$. Furthermore, for each $Q_i\in\mathbb{F}_q(Y)$, $h_{Q_i}$ is of the form $h_{Q_i}(v)=h_0(v-\alpha_i)-b_i$ for some $b_i\in\mathbb{Z}$. Thus $h_{Q_i}^*(u)=\alpha_i\cdot u + b_i$. It follows that $\deg h^*(u)=\alpha\cdot u + b$.

\begin{figure}[htbp]
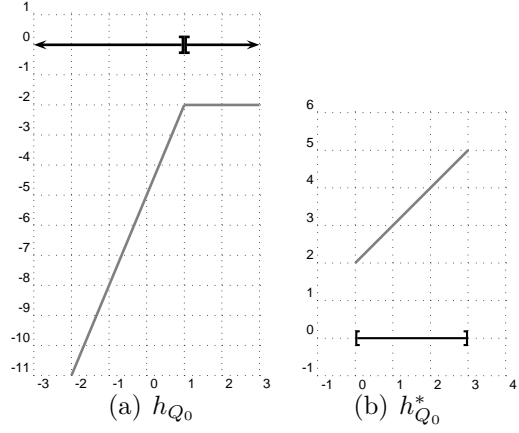

    \centering
    \subfigure[$h_{Q_0}$]{\exrsh}
    \subfigure[$h_{Q_0}^*$]{\exrsp}\\
  \caption{$h$ and $h^*$ for a simple ruled surface}\label{fig:ruled-surface}
\end{figure}

As an example, by setting $u_{\min}=0$, $u_{\max}=3$, $\alpha_0=1$, $b_0=2$, and all other possible parameters to $0$, we get the ruled surface with $h$ and $h^*$ as pictured in figure \ref{fig:ruled-surface}.

We now consider the code $\mathcal{C}(Y,h^*,\mathcal{P})$ for any set $\mathcal{P}$ of $\mathbb{F}_q$-rational points on $Y$; note that $h_P^*$ is affine and integer-valued on lattice points for any point $P\in \mathcal{P}$ as required. Set $l=\# \mathcal{P}$. For the sake of simplicity we shall assume that $u_{\min}=0$, $\alpha>0$ and $\alpha_i,b_i\geq 0$. This ensures that $h$ is in fact semi-ample, i.e. that $h^*$ is a divisorial polytope.  One easily confirms that $\lambda_0=b+a\cdot \alpha$ and that 
\begin{equation*}
	\nu(\lambda)=\left\{\begin{array}{l@{\textrm{if }}l}
a \qquad&\lambda\leq b\\
\lfloor a-\frac{\lambda-b}{\alpha}\rfloor\quad& \lambda\geq b.\\
	\end{array}\right.
\end{equation*}
Using proposition \ref{prop:surfaceparam} we then have that
\begin{equation*}
d\geq \min \left\{(l-b-a\cdot\alpha)(q-1)),(l-b)(q-1-a)\right\}.
\end{equation*}
We can then use corollary \ref{cor:kupper} to bound $d$ from above. Indeed, for $t\in \mathbb{Z}$, $0\leq t\leq a$ we have that $h_{Q_j}^*(u)\geq b_j+\alpha_j t$ for all $t\leq u \leq a$. Using the particular cases $t=0$ and $t=a$ results in the bound
\begin{equation*}
d\leq \min \left\{(l-b-a\cdot\alpha+g(Y))(q-1)),(l-b+g(Y))(q-1-a)\right\}.
\end{equation*}
Thus, we have upper and lower bounds for $d$ differing by at most $g(Y)\cdot(q-1)$.

We now use proposition \ref{prop:codedim} to find a lower bound for $k$. We always have that
\begin{equation*}
k\geq (a+1)(b+1+\alpha\cdot a/2-g(Y))
\end{equation*}
where equality holds if $b>g(Y)-2$.
Suppose now that $b\leq g(Y)$; set $c=\lceil ({g(Y)-b})/{\alpha} \rceil$. Now $h^*(u)$ is effective for every $u\in\Box_h\cap M$, so we can improve the bound on $k$ to 
\begin{equation}
k \geq (a+1-c)(b+1+\frac{\alpha}{2}(c+a)-g(Y))+c.\label{eqn:ellipticdim}
\end{equation} 
Note that equality holds if $g(Y)\leq 1$.

\begin{rem}
	In the case $Y=\mathbb{P}^1$ and $\alpha_i,b_i=0$ for all points $Q_i$ with the exception of some point $Q_0$, $X$ is the Hirzebruch surface $\mathcal{H}_\alpha$. If we set $\mathcal{P}=\mathbb{F}_q^*$, we recover the results of \cite{MR1953195}. Note that the curves we use to cover the points of the torus are perpendicular to those used by Hansen. In our case, these curves have self-intersection zero, but the adjustment we make with $\mu_-$ and $\mu_+$ compensates for this.
\end{rem}

We now compare these codes to product codes coming from a length $q-1$ Reed-Solomon and a Goppa code. A Reed-Solomon code has parameters $[q-1,k_1,d_1]$ with $d_1=q-k_1$ and $k_1\leq q-1$. Assume $\tau\in\mathbb{N}$ with $\tau > g(Y)-1$. Then the Goppa code on $Y$ gotten by evaluating a divisor $D$ of degree $\tau$  at $l$ rational points has parameters $[l,k_2,d_2]$ with $k_2\geq\tau-g(Y)+1$  and $d_2\geq l-\tau$, see for example (\cite{MR1667936}, vol. I ch. 10). The resulting product code $\mathcal{C}_{prod}$ has parameters $[l(q-1),k_1k_2,d_1d_2]$. For the product code we thus have the estimates
\begin{align*}
k_{prod}\geq k_{est}:=k_1(\tau-g(Y)+1),\\
d_{prod}\geq d_{est}:=(q-k_1)(l-\tau).
\end{align*}
We can then show the following:
\begin{prop}
	Fix some curve $Y$ and assume that $l\geq q+g(Y)-1$. Using notation as above, we can find $h^*$ and $\mathcal{P}$ as above such that the estimated parameters for $\mathcal{C}(Y,h^*,\mathcal{P})$ are better than those for $\mathcal{C}_{prod}$. Specifically, we show that
\begin{align}
k_{est}\leq (a+1)(b+1+\alpha\cdot a/2-g(Y)),\label{eqn:keq}\\
d_{est}< \min \left\{(l-b-a\cdot\alpha)(q-1)),(l-b)(q-1-a)\right\}.\label{eqn:deq}
\end{align}
\end{prop}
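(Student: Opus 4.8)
The plan is to exhibit an explicit divisorial polytope $h^*$ — equivalently, explicit nonnegative integers $a=u_{\max}$, $\alpha=\sum\alpha_i$, $b=\sum b_i$, with all $\alpha_i,b_i\ge 0$ and supported at a single point $Q_0\notin\mathcal{P}$ — and then to check that the code bounds already available force \eqref{eqn:keq} and \eqref{eqn:deq}. With $u_{\min}=0$ we have $\deg h^*(u)=\alpha u+b$ on $\Box_h=[0,a]$, so Proposition \ref{prop:codedim} gives $k\ge \#h^*+(1-g)\#(\Box_h\cap M)=(a+1)(b+1+\alpha a/2-g)$, while (Kodaira vanishing being available on surfaces) Proposition \ref{prop:surfaceparam} gives $d\ge\min\{(l-b-a\alpha)(q-1),\,(l-b)(q-1-a)\}$. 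Thus, once the two displayed inequalities hold, the $T$-code beats $\mathcal{C}_{prod}$ in both estimated parameters, and it suffices to choose $a,\alpha,b$ suitably.

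The natural choice is to give the polytope the same width as the support of the Reed--Solomon code, $a=k_1-1$ (so that $q-1-a=q-k_1$), take unit slope $\alpha=1$, and pick $b$ as the least nonnegative integer with $b\ge\tau-\tfrac{k_1-1}{2}$, i.e.\ so that the mean column degree $\tfrac1{a+1}\sum_{u=0}^a\deg h^*(u)=b+\tfrac{\alpha a}{2}$ is at least $\tau$ while the bottom column degree $b$ stays strictly below $\tau$. Geometrically this replaces the $k_1\times(\tau-g+1)$ "rectangle" underlying $\mathcal{C}_{prod}$ by a trapezoid of the same lattice-point count but with one shorter side, which is exactly what buys the improvement in $d$. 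With these choices \eqref{eqn:keq} is immediate, since $(a+1)(b+1+\alpha a/2-g)=k_1\bigl(b+\tfrac{k_1-1}{2}+1-g\bigr)\ge k_1(\tau-g+1)=k_{est}$; and the second term of the minimum in \eqref{eqn:deq} is equally easy, because $(l-b)(q-1-a)=(l-b)(q-k_1)>(q-k_1)(l-\tau)=d_{est}$ follows at once from $b<\tau$.

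The delicate point — and the only place the hypothesis $l\ge q+g(Y)-1$ enters — is the first term of the minimum in \eqref{eqn:deq}, namely $(l-b-a\alpha)(q-1)>(q-k_1)(l-\tau)$. Writing $L=l-\tau$ and substituting the chosen values, this rearranges (up to the rounding implicit in $b$) to the linear inequality $(k_1-1)L>\tfrac{q-1}{2}(k_1-1)$, i.e.\ essentially $l-\tau>\tfrac{q-1}{2}$, which $l\ge q+g-1$ secures once $\tau$ is in the range of interest (equivalently, one is free to take the competing Goppa degree $\tau$ small). The main obstacle is therefore not conceptual but careful bookkeeping: one must track the ceiling in the definition of $b$ and the exact comparison of $q-1$ with $q-k_1$, and separately dispose of the small cases $k_1\le 2$ (where $\mathcal{C}_{prod}$ is nearly MDS in the torus direction and one instead takes $\alpha\ge 2$, or $a\ge 1$ when $k_1=1$, with the same argument). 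Finally one checks that the $h^*$ produced is genuinely a divisorial polytope in the sense of Definition \ref{def:divpoly} and that $h^*_{P_i}$ is affine and integral for $P_i\in\mathcal{P}$ — both automatic since $h^*$ is supported off $\mathcal{P}$ with $u_{\min}=0$, $\alpha>0$, and $\alpha_i,b_i\ge 0$ — so that $\mathcal{C}(Y,h^*,\mathcal{P})$ is defined and the estimates of Propositions \ref{prop:codedim} and \ref{prop:surfaceparam} apply.
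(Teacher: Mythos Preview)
Your overall strategy—set $a=k_1-1$ so that $q-1-a=q-k_1$, then pick $\alpha,b$ to match the dimension estimate and verify the two terms of the minimum in \eqref{eqn:deq} by hand—is exactly the paper's. Two differences are worth flagging. First, the paper does not fix $\alpha=1$ and take a ceiling for $b$; it instead chooses $\alpha\in\mathbb{N}$ with $\alpha(k_1-1)$ even and $\le 2\tau$, so that $b=\tau-\alpha(k_1-1)/2$ is a nonnegative integer exactly and \eqref{eqn:keq} holds with equality, with no rounding to track. Second, the paper does not treat ``small $k_1$'' as the exceptional case; it handles the regime $\tau<k_1-1$ by passing to the product code with parameters $\tilde k_1=\tau-(g-1)$ and $\tilde\tau=k_1+(g-1)$, checking that its estimated parameters dominate those of $\mathcal{C}_{prod}$ and that now $\tilde\tau\ge\tilde k_1-1$, thereby reducing to the first case. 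Your sketch does not supply this reduction.

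There is also a genuine gap in your treatment of the first term of the minimum. You correctly reduce $(l-b-a\alpha)(q-1)>(q-k_1)(l-\tau)$ (with your $\alpha=1$) to essentially $l-\tau>(q-1)/2$, but then write that ``$l\ge q+g-1$ secures this once $\tau$ is in the range of interest (equivalently, one is free to take the competing Goppa degree $\tau$ small)''. That is not correct: $\tau$ is a \emph{given} parameter of the product code $\mathcal{C}_{prod}$ you are trying to beat, not something you may choose, and nothing in the hypotheses prevents $\tau$ from being close to $l$ (any $g\le\tau\le l-1$ is admissible for the Goppa factor). For such $\tau$ the inequality $l-\tau>(q-1)/2$ simply fails, and your chosen $h^*$ does not yield \eqref{eqn:deq}. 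This is not a bookkeeping issue and is not cured by the edge cases $k_1\le 2$; it is a missing argument for large $\tau$. The paper's proof, by comparison, makes its choice of $\alpha$ and then asserts that ``a quick calculation shows that \eqref{eqn:deq} holds as well''; whatever that calculation is, it cannot proceed by declaring $\tau$ small, and you should replace that step with an argument that actually uses the hypothesis $l\ge q+g-1$ against the fixed $\tau$.
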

\begin{proof}
	First, suppose that $\tau\geq(k_1-1)$. We then set $a=k_1-1$ and choose some $\alpha\in\mathbb{N}$ such that $\alpha(k_1-1) \leq 2\tau$ and $\alpha(k_1-1)$ is divisible by two. Choose $b_i\geq0$ such that $b=\tau-\alpha(k_1-1)/2$. Choose any set $\mathcal{P}$ consisting of $l$ points. Equality in \eqref{eqn:keq} follows immediately and a quick calculation shows that \eqref{eqn:deq} holds as well.

Suppose instead that $\tau<(k_1-1)$. Set $\widetilde{k}_1=\tau-(g(Y)-1)$ and $\widetilde{\tau}=k_1+(g(Y)-1)$. Consider then the product code $\widetilde{\mathcal{C}}_{prod}$ obtained as product of the $\widetilde{k}_1$-dimensional Reed-Solomon code and the Goppa code corresponding to the divisor $\widetilde{\tau}Q_0$. Then one easily confirms that the estimated minimum distance and dimension for $\widetilde{\mathcal{C}}_{prod}$ are greater than or equal to those of $\mathcal{C}_{prod}$ and that $\widetilde{\tau}\geq(\widetilde{k}_1-1)$. Thus, we reduce to the first case above.
\end{proof}

\subsection{A Code on an Elliptic Curve}
The following example illustrates techniques that can be used to refine our estimate for minimum distance. It also demonstrates that there are $T$-codes with better parameters than the those estimated in the previous example. Before we begin, we first note the following lemma:
\begin{lem}\label{lemma:section-decomp}
Let $D_h$ be a $T$-invariant divisor on $\tilde{X}(\fan)$, and let $s$ be a section such that $(s)_0$ is not irreducible. Then we can find functions $h_1,h_2\in SF(\fan)$ and $s_1\in L(D_{h_1})$, $s_1\in L(D_{h_1})$ such that: 
\begin{enumerate}
\item $D_h=D_{h_1}+D_{h_2}$;
\item $(s)=(s_1)+(s_2)$;
\item $D_{h_i}$ is not rationally equivalent to $0$ for $i=1,2$.
\end{enumerate} 
\end{lem}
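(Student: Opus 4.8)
The plan is to reduce the statement to one geometric fact — that the effective divisor $(s)_0$ admits a nontrivial splitting into effective pieces, each linearly equivalent to a $T$-invariant Cartier divisor — after which everything is formal. Since $s\in L(D_h)$, the zero divisor $(s)_0=D_h+(s)$ is an effective Cartier divisor on $X=\tilde{X}(\fan)$, and by hypothesis it is not irreducible, so we may write $(s)_0=E_1+E_2$ with $E_1,E_2$ effective and nonzero (common components being allowed if $(s)_0$ is non-reduced). I work with $X$ smooth — the situation relevant for the code constructions — so that $E_1,E_2$ are automatically Cartier. The lemma follows once we produce $h_1\in\SF(\fan)$ with $D_{h_1}\sim E_1$: put $h_2:=h-h_1\in\SF(\fan)$; since $h\mapsto D_h$ identifies $\SF(\fan)$ with the group of $T$-invariant Cartier divisors, $D_{h_1}+D_{h_2}=D_h$, which is (1), and $D_h\sim(s)_0$ forces $D_{h_2}=D_h-D_{h_1}\sim(s)_0-E_1=E_2$.

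The existence of such an $h_1$ amounts to the assertion that $T$-invariant Cartier divisors generate $\pic(X)$, i.e.\ that $\SF(\fan)\to\pic(X)$ is surjective. To prove this I would restrict a line bundle to the generic fibre of the quotient map $\pi\colon X\to Y$, obtaining a line bundle on the toric variety $X_\eta$ over $K(Y)$, whose Picard group is generated by torus-invariant divisors. Such a divisor is given by a support function on the tail fan, which extends to a constant element of $\SF(\fan)$ and hence to a $T$-invariant Cartier divisor $\bar D$ on $X$ with the prescribed restriction. The residual bundle $\CO(E_1)\otimes\CO(-\bar D)$ is then trivial on $X_\eta$, hence equal to $\CO(E)$ for a divisor $E$ disjoint from $X_\eta$, i.e.\ a vertical divisor; but each vertical prime divisor is a component of some fibre $\pi^{-1}(P)$, and these components are $T$-invariant because $\pi$ is the good quotient and $T$ is connected (cf.\ \cite{MR2207875}), so $E$ again lies in the image of $\SF(\fan)$. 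Alternatively one may simply invoke the computation of $\pic(X)$ in \cite{petersen-suess08}.

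It remains to exhibit the sections and check (2) and (3). Since $D_{h_i}\sim E_i$ with $E_i$ effective, choose $g_i\in K(X)^\ast$ with $D_{h_i}+(g_i)=E_i$; read through a trivialisation of $\CO(D_{h_i})$ this gives $s_i\in L(D_{h_i})$ with $(s_i)_0=E_i\ge 0$ and $(s_i)=E_i-D_{h_i}$. Then $(s_1)+(s_2)=(E_1+E_2)-(D_{h_1}+D_{h_2})=(s)_0-D_h=(s)$, which is (2); and $D_{h_i}\sim E_i\neq 0$, combined with the fact that an effective divisor rationally equivalent to $0$ on a complete variety is $0$, gives $D_{h_i}\not\sim 0$, which is (3).

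The main obstacle is exactly the surjectivity of $\SF(\fan)\to\pic(X)$ — that the chosen component $E_1$ can be matched by a $T$-invariant Cartier divisor. The delicate part is the vertical contribution, coming both from $\pic(Y)$ and from the special fibres of $\pi$; once one knows that vertical prime divisors are $T$-invariant, the remainder is bookkeeping. A minor point to watch is the non-reduced case — where ``$(s)_0$ not irreducible'' must be read as admitting a nontrivial decomposition into effective divisors — and the standing smoothness hypothesis, without which one must separately ensure that $E_1$ is Cartier.
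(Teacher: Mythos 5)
Your proof is correct and follows essentially the same route as the paper's: decompose $(s)_0$ into two nontrivial effective divisors, use that $\pic(X)$ is generated by $T$-invariant Cartier divisors to replace each piece by some $D_{h_i}$ up to rational equivalence, and then extract the sections $s_i$ and verify the three conditions. The only real difference is cosmetic (you set $h_2:=h-h_1$ up front, whereas the paper sets $s_2:=s/s_1$ and then back-solves for $h_2$, which amounts to the same thing) together with your welcome sketch of why $\SF(\fan)\to\pic(X)$ is surjective, a fact the paper simply asserts.
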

\begin{proof}
Since $(s)_0$ is not irreducible, we can write it as the sum of two nontrivial effective divisors $(s)_0=C_1+C_2$. Since the Picard group is generated by $T$-invariant divisors, we can find $h_1',h_2 '\in SF(\fan)$ such that $C_i=D_{h_i '}+(s_i')$ for some $s_i'\in L(D_{h_i'})$, $i=1,2$. We thus have $$D_h+(s)=D_{h_1 '}+(s_1')+D_{h_2 '}+(s_2').$$ Now set $s_1:=s_1'$, $h_1:=h_1'$, and $s_2:=s/s_1$, and let $h_2$ be the support function corresponding to the $T$-invariant divisor $D_{h_2'}+(s_2')-(s_2)$. These support functions and sections clearly fulfill the desired conditions.
\end{proof}

We now return to the divisor on the $T$-surface considered in example \ref{ex:surface-div}. For $Y$ either $\mathbb{P}^1$ or elliptic,  we have already noted that $D_h$ is semi-ample; this is the same as saying that $h^*$ is a divisorial polytope. Now if $Y=\mathbb{P}^1$ and $Q_1=0$, $Q_2=\infty$, the $T$-variety associated to $h^*$ is in fact toric and $h^*$ corresponds to the polytope in $\mathbb{Z}^2$ given by $\conv\{(0,0),(2,-2),(3,-1),(4,1),(4,2)\}$. Let $\mathcal{P}=Y\setminus\{Q_1,Q_2\}$; the example of $\mathcal{C}(\mathbb{P}^1,h^*,\mathcal{P})$ is considered in \cite{soprunov:2008}, where it is shown using the Hasse-Weil bound that $d\geq (q-1)^2-3(q-1)-2\sqrt 2 +1$ for all $q\geq 19$. We now calculate the parameters $d$ and $k$ for $\mathcal{C}(Y,h^*,\mathcal{P})$ in the case that $Y$ is an elliptic curve.

In calculating $k$, note that $\deg h^*(u)>0$ for $u>0$. Thus, in these degrees we have that $\dim L(D_h)_u=\deg h^*(u)$. On the other hand $h^*(0)=0$ which is effective, so $\dim L(D_h)_0=1$. Adding everything up we get that $k=8$.

Proposition \ref{prop:kupper} gives us an easy upper bound for $d$. If we set $f:=1$, we have that $f\cdot\chi^u\in L(D_h)$ for $u\in{0,1,2,3}$. Indeed, $h^*(u)$ is effective in these degrees. Thus, it follows that $d\leq l(q-1)-3l$.

We now bound $d$ from below. One easily checks that $\lambda_0=3$. Likewise, one can easily calculate that $\nu(0)=4$, $\nu(1)=3$, $\nu(2)=1$, and $\nu(3)=0$. Now consider some section $s$ such that $\lambda=1$. We claim that we actually must have that $\nu\leq 2$. The section $s$ cannot have support in weight $0$ since $\deg h^*(0)-1=-1$. Furthermore, $s$ cannot have support in weight $1$. Indeed, $\Gamma(Y,\CO(Q_2-P))=0$ for any point $P\neq Q_2$, since $Y\neq \mathbb{P}^1$. It follows that for any section $s$ with $\lambda\neq 0$ or with $\lambda=0$ and $\nu<4$ we have $Z(s)\leq \lambda(q-1)+l(3-\lambda)$; if we assume that $l\geq q-1$, it follows that $Z(s)\leq 3l$

Now consider some section $s$ such that $\lambda=0$ and $\nu=4$; we will show that under certain assumptions we also have $Z(s)\leq 3l$. First, suppose that $(s)_0$ is irreducible. Then using the Hasse-Weil bound for singular curves as stated in \cite{MR1394921}, we have that the number $\#(s)_0(\mathbb{F}_q)$ of $\mathbb{F}_q$-rational points on $(s)_0$ is bounded above by
\begin{equation*}
\#(s)_0(\mathbb{F}_q)\leq q+1 + 2 g\sqrt q
\end{equation*}
where $g:=g( (s)_0)$ is the arithmetic genus of $(s)_0$. Note that this only depends on the divisor $D_h$ and not on $s$. Now, if we require that 
\begin{equation*}
	q\geq \left(\frac{g+\sqrt{g^2+8}}{2}\right)^2
\end{equation*}
it follows that 
\begin{equation*}
Z(s)\leq q+1+2g\sqrt q \leq (q-1)3.
\end{equation*}
In our case, it follows from proposition \ref{prop:genusformula} that $g=9$ so the required bound on $q$ is $q\geq 89$.

Suppose on the other hand that $(s)_0$ is not irreducible. Let $h_1,h_2\in SF(\fan)$ be support functions and $s_i\in L(D_i)$ $i=1,2$ sections as in lemma \ref{lemma:section-decomp}, ordered such that $\vol \Box_{h_1} \leq \vol \Box_{h_2}$. It easily follows that $\nu(s)=\nu(s_1)+\nu(s_2)$ and by remark \ref{rem:h*sum} we have $h^*\geq h_1^*+h_2^*$. Now if $s_1$ only has support in a single degree, $(s_1)_0$ is $T$-invariant. Thus we have $Z(s_1)=0$ and $Z(s)=Z(s_2)$. 
Indeed, since $\lambda=0$, $(s_1)_0$ cannot contain one of the curves $C_P$ covering the points of evaluation, and all other $T$-invariant prime divisors don't contain any points of evaluation.
Now note that $h_2^*\leq h^*+(f)$ for some $f\in K(Y)$. Thus, $g\left( (s_2)_0\right)\leq g\left( (s)_0\right)$ and if $(s_2)_0$ is irreducible, the above argument with the Hasse-Weil bound gives the desired bound. If not, we replace $h$ and $s$ by $h_2$ and $s_2$ and repeat the process until we have an irreducible section and thus the desired bound, or have sections $s_1'$ and $s_2'$ both with support in multiple weights.  

We have now reduced to the situation where $h'\in SF(\fan)$ with $s'\in L(D_{h'})$, ${h'}^*\leq h^*+(f)$, for this $s'$ we have $\nu=4$, and $h'$ and $s'$ admit a decomposition into $h_1',h_2'$ and $s_1',s_2'$ such as in lemma \ref{lemma:section-decomp} such that both sections have support in multiple weights. We show that this is impossible. We first note that since $\nu=4$, $s_i'$ must have support in the largest and smallest weights of $\Box_{h_i'}$, which we call $u_i^{\max}$ and $u_i^{\min}$, respectively. Furthermore, by adjusting with $T$-invariant principal divisors we can assume that $(f)=0$,  $u_i^{\min}=0$, and ${h_i'}^*(0)=0$. We then have $({{h'}_i})_{Q_1}^*(u_i^{\max})<2$ for $i=1,2$. Indeed, we must have 
\begin{equation*}
({{h'}_1})_{Q_1}^*(u_1)+ ({{h'}_2})_{Q_1}^*(u_2)< 2
\end{equation*}
 for $u_1\in \Box_{h_1'}$ and $u_2\in \Box_{h_2'}\setminus \{u_2^{\max}\}$. The claim follows for $i=1$ by setting $u_2=0$;  for $i=2$ we just switch the indices. Now,
for at least one $i\in{1,2}$ we must also have $({{h'}_i})_{Q_2}^*(u_i^{\max})<0$. Indeed, this follows from
\begin{equation*}
({{h'}_1})_{Q_2}^*(u_1^{\max})+({{h'}_2})_{Q_2}^*(u_2^{\max})\leq -1.
\end{equation*}
For this $i$, 
\begin{equation*}
	L(D_{h'_i})_{u_i^{\max}}=\Gamma\left(Y,\CO({h'_i}^*(u_i^{\max}))\right)\subset\Gamma\left(Y,\CO(Q_1-Q_2))\right)=0.
\end{equation*}
This is however impossible since we had already concluded that $s'_i$ has support in weight $u_i^{\max}$. 

We have thus shown that a section $s\in L(D_h)$ with $\lambda=0$ is either irreducible, in which case we can bound the number of rational points on it using the Hasse-Weil bound, or it can be decomposed into $T$-invariant components and some remaining section, which either is irreducible or which has support in weights differing by at most 3. Thus, if we  require that $q\geq 89$ and $l\geq q-1$, we have that for any section $s\in L(D_h)$, $Z(s)\leq 3l$. Since our upper bound already states that $d\leq l(q-1)-3l$, we get that in fact
\begin{equation*}
d=l(q-1)-3l.
\end{equation*}

This marks an improvement over the estimates for any of the $T$-codes considered in the previous example. Indeed, to get the desired estimated minimum distance we would have to require $b=0$ and $a\leq3$. Using equation \eqref{eqn:ellipticdim}, one easily checks that the dimension of the resulting code is smaller than $8$.  

\subsection{A Computational Example}
  We are able to provide a $T$-code over $\FF_7$ with parameters $[66,19,30]$, which is as good as the best known code (c.f. \cite{Grassl:codetables}). We set 
  $Y=V(zy^2 + 6x^3 + 4z^3)\subset \PP^2_{\FF_7}$ and consider the divisorial polytope given in figure~\ref{fig:record}. Fixing two $\mathbb{F}_q$-ration points $Q_1$ and $Q_2$ we can compute a generator matrix of $\mathcal{C}(Y,h^*)$ using Macaulay 2 \cite{M2} and the  \verb+toriccodes+ package \cite{ilten:toriccodes}. We can then compute the minimal distance using Magma \cite{MR1484478}.
  
  \begin{figure}[htbp]
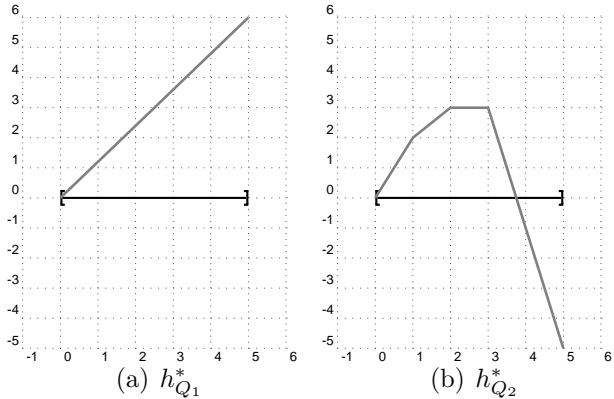

    \centering
    \subfigure[$h^*_{Q_1}$]{\recordi}\hspace{1em}
    \subfigure[$h^*_{Q_2}$]{\recordii}
    \caption{A divisorial polytope defining a $[66,19,30]_7$ code}
   \label{fig:record}
   \end{figure}

   It is easy to see that the length and dimension of $\mathcal{C}(Y,h^*)$ are always respectively $66$ and $19$. However, the minimum distance can be either $29$ or $30$, depending on the choice of $Q_1$ and $Q_2$. For example, setting $Q_1=(1:2:1)$, $Q_2=(1:5:1)$ results in a minimum distance of 30, whereas $Q_1=(1:2:1)$, $Q_2=(0:1:1)$ results in a minimum distance of 29. In fact, the automorphism group of $Y$ divides the set of all pairs of rational points on $Y$ into two equally large subsets; using pairs in one subset results in a minimum distance of $30$, whereas pairs from the other subset result in a minimum distance of $29$. 

   For this example, we are also able to use proposition \ref{prop:kupper} to easily show that $d\leq 30$. Indeed, it is not difficult to find a section $f\in \Gamma(Y,\CO(3Q_1+3Q_2))$ vanishing at $6$ distinct points of $Y(\mathbb{F}_q)\setminus\{Q_1,Q_2\}$. Thus, $f\in L(D_h)_3$ and we get $d\leq 66-6\cdot 6=30$.

\bibliographystyle{alpha}
\bibliography{pdivcodes-arxiv.bib}
\address{
Nathan Ilten\\
Mathematisches Institut\\
Freie Universit\"at Berlin\\
Arnimallee 3\\
14195 Berlin, Germany}{nilten@cs.uchicago.edu}

\address{Hendrik S\"u\ss{}\\
	Institut f\"ur Mathematik\\
        LS Algebra und Geometrie\\
        Brandenburgische Technische Universit\"at Cottbus\\
        PF 10 13 44\\
        03013 Cottbus, Germany}{suess@math.tu-cottbus.de}

\end{document}